\providecommand{\U}[1]{\protect\rule{.1in}{.1in}}
\providecommand{\U}[1]{\protect \rule{.1in}{.1in}}
\newtheorem{theorem}{Theorem}[section]
\newtheorem{corollary}[theorem]{Corollary}
\newtheorem{lemma}[theorem]{Lemma}
\newtheorem{proposition}[theorem]{Proposition}
\newenvironment{proof}[1][Proof]{\noindent \textbf{#1.} }{\  \rule{0.5em}{0.5em}}
\numberwithin{equation}{section}
\begin{document}

\title{Hua's Theorem with the Primes in \\ Piatetski-Shapiro Prime Sets}

\author{Jinjiang Li\footnotemark[1]\,\,\,\, \, \& \,\, Min Zhang\footnotemark[2]    \vspace*{-4mm} \\
$\textrm{\small Department of Mathematics, China University of Mining and Technology}^{*\,\dag}$
                    \vspace*{-4mm} \\
     \small  Beijing 100083, P. R. China  }

\footnotetext[2]{Corresponding author. \\
    \quad\,\, \textit{ E-mail addresses}: \href{mailto:jinjiang.li.math@gmail.com}{jinjiang.li.math@gmail.com} (J. Li), \href{mailto:min.zhang.math@gmail.com}{min.zhang.math@gmail.com} (M. Zhang).   }

\date{}
\maketitle


{\textbf{Abstract}}: In this paper, we study the hybrid problem of Hua's theorem and the Piatetski-Shapiro prime number theorem, and obtain results
in this direction of the nonhomogeneous case $k=3$, which deepen the classical result of Hua.

{\textbf{Keywords}}: Piatetski-Shapiro prime; exponential sum; circle method; mixed power

\section{Introduction and main result}

    In 1937, I. M. Vinogradov~\cite{Vinogradov}~solved the ternary Goldbach problem. He proved that, for sufficiently
    large odd integer~$N,$~there holds
\begin{equation*}
     \sum_{p_1+p_2+p_3=N}1=\frac{1}{2}\mathfrak{S}(N)\frac{N^2}{\log^3N}+O\bigg(\frac{N^2}{\log^4N}\bigg),
\end{equation*}
where~$\mathfrak{S}(N)$~is the singular series
\begin{equation*}
    \mathfrak{S}(N)=\prod_{p|N}\Big(1-\frac{1}{(p-1)^2}\Big)\prod_{p\nmid N}\Big(1+\frac{1}{(p-1)^3}\Big).
\end{equation*}
For any sufficiently large odd integer $N$ and fixed positive integer $k$, let $\mathcal{R}(N,k)$  be the number of
representations of $N$ in the form
\begin{equation*}
  N=p_1+p_2+p_3^k,
\end{equation*}
where $p_1,\,p_2,\,p_3$ are primes. In 1938, L. K. Hua \cite{Hua} generalized the result of Vinogradov and proved that
\begin{equation*}
  \mathcal{R}(N,k)=\frac{k^2}{k+1}\mathfrak{S}(N,k)\frac{N^{1+1/k}}{\log^3N}+O\Big(\frac{N^{1+1/k}}{\log^4N}\Big),
\end{equation*}
where
\begin{equation*}
  \mathfrak{S}(N,k)=\prod_{p}\Big(1+\frac{B_p(N,k)}{(p-1)^3}\Big),
\end{equation*}
$$B_q(N,k)=\sum_{\substack{a=1\\(a,q)=1}}^q C_q(a,k)e\Big(-\frac{aN}{q}\Big),\quad C_q(a,k)=\sum_{\substack{\ell=1\\(\ell,q)=1}}^qe\Big(\frac{a\ell^k}{q}\Big).$$

    In 1986, Wirsing~\cite{Wirsing}, motivated by the earlier work of Erd\H{o}s and Nathanson~\cite{Erdos-Nathanson}~on sums of
 squares, considered the question of whether one could find thin subsets~$\mathcal{S}$~of primes which were still sufficient to obtain all
 sufficiently large odd integers as sums of three of them.~He obtained the very satisfactory answer that there exist such sets S with the
 property that~$\sum\limits_{p\leqslant x,\,p\in\mathcal{S}}1\ll(x\log x)^{1/3}.$~This result was later rediscovered by Ruzsa.~Wirsing's result, which
 is obviously best possible apart from the logarithmic factor, is based on probabilistic considerations and does not lead to a subset of the
 primes which is constructive or recognizable.

    We fix a real number $c$ and consider the number of~$n\leqslant x$~such that the integer part~$[n^{c}]$~is a prime. In the case that $0<c\leqslant1$ every
prime $\leqslant x^c$ occurs in this fashion and it is a simple consequence of the prime number theorem that we have the expected asymptotic formula
\begin{equation}\label{introduction-1}
   \sum_{ \substack{n\leqslant x \\ [n^c]=p } } 1= \big(1+o(1)\big)\frac{x}{c\log x}.
\end{equation}

      We let~$\gamma=1/c$,~so that the set of the Piatetski-Shapiro primes of type $\gamma<1$
\begin{equation*}
  \mathcal{P}_\gamma=\{p:p=[n^{1/\gamma}]~\textrm{for some $n\in\mathbb{N}$}\}
\end{equation*}
is a well-known thin set of prime numbers. Piatetski-Shapiro~\cite{Piatetski-Shapiro}~proved the much more difficult result that the asymptotic
formula (\ref{introduction-1}) still holds in the range~$1<c<12/11.$~Since then, this range for $c$ has been improved by a number of
authors
\cite{Baker-Harman-Rivat,Heath-Brown-1,Jia-1,Jia-2,Kolesnik-1,Kolesnik-2,Kumchev,Leitmann,Liu-Rivat,Rivat}. The best results are
given by \cite{Rivat-Sargos} and \cite{Rivat-Wu}, where it is proved that
\begin{equation*}
      \pi_\gamma(x)\sim \frac{x^\gamma}{\log x}
\end{equation*}
for $2426/2817<\gamma<1$, and
\begin{equation*}
      \pi_\gamma(x)\gg \frac{x^\gamma}{\log x}
\end{equation*}
for $205/243<\gamma<1$.

   In 1992, A. Balog and J. P. Friedlander~\cite{Balog-Friedlander} considered the ternary Goldbach problem with variables restricted to Piatetski-Shapiro primes. They proved that, for $20/21<\gamma\leqslant1$ fixed, any sufficiently large odd integer $N$ can be written as three primes with each prime of the form $[n^{1/\gamma}]$. Rivat \cite{Rivat} extended the range $20/21<\gamma\leqslant1$ to $188/199<\gamma\leqslant1$; Kumchev \cite{Kumchev} extended the range to $50/53<\gamma\leqslant1.$ Jia \cite{Jia-3} used a sieve method to enlarge the range to $15/16<\gamma\leqslant1.$

   In 1998, Zhai \cite{Zhai} considered the hybrid problem of quadratic Waring-Goldbach problem with each prime variable restricted to Piatetski-Shapiro sets.
To be specific, he proved that, for $43/44<\gamma\leqslant1$ fixed, every sufficiently large integer $N$ satisfying $N\equiv 5\pmod {24}$ can be written as five
squares of primes with each prime of the form $[n^{1/\gamma}]$. Later, in 2005, Zhang and Zhai \cite{Zhang-Zhai} improved the result of Zhai \cite{Zhai} and
enlarge the range to $249/256<\gamma\leqslant1.$

  In 2004, Cui \cite{Cui} studied the hybrid problem of Hua's theorem ($k=2$) with each prime variable restricted to Piatetski-Shapiro sets. He proved that, for
  any $104/105<\gamma\leqslant1$ fixed, every sufficiently large odd integer can be written as the sum of two primes and a prime square with all primes of the
  form $[n^{1/\gamma}]$.

   In this paper, we consider the hybrid problem of Hua's theorem ($k=3$) with each prime variable restricted to Piatetski-Shapiro sets and prove the following theorem.

\begin{theorem}\label{Hua's-theorem-k=3}
   Let $0<\gamma_i\leqslant1\,(i=1,2,3),\,0<\delta_j<1\,(j=1,3)$ satisfying
  \begin{equation}
     \left\{
              \begin{array}{l}
                 \displaystyle\frac{\gamma_1+\gamma_2}{2}+\frac{\delta_1}{40}>1, \\
                 \displaystyle\frac{\gamma_1+\gamma_2}{2}+\frac{\delta_3}{3}>1, \\
                 73(1-\gamma_i)+86\delta_1<9 \,\,(i=1,2), \\
                 1714(1-\gamma_3)+1725\delta_3<46.
              \end{array}
     \right.
  \end{equation}
  Then for sufficiently large odd integer $N$, the equation
  \begin{equation*}
     N=p_1+p_2+p_3^3,\quad p_i\in\mathcal{P}_{\gamma_i},\quad i=1,2,3
  \end{equation*}
   is solvable.
\end{theorem}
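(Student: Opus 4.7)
\noindent\emph{Proof sketch.} The plan is to apply the Hardy--Littlewood circle method to the log-weighted counting function
\[ \mathcal{R}^*(N) = \sum_{\substack{p_1+p_2+p_3^3=N \\ p_i\in\mathcal{P}_{\gamma_i}}}(\log p_1)(\log p_2)(\log p_3) \]
and to show $\mathcal{R}^*(N)\gg N^{\gamma_1+\gamma_2+\gamma_3/3-1}$; the solvability claimed in the theorem then follows at once. Setting $X_1=X_2=N$ and $X_3=(N/3)^{1/3}$, define
\[ f_j(\alpha)=\sum_{\substack{p\leq X_j \\ p\in\mathcal{P}_{\gamma_j}}}(\log p)\,e(\alpha p)\quad(j=1,2), \qquad f_3(\alpha)=\sum_{\substack{p\leq X_3 \\ p\in\mathcal{P}_{\gamma_3}}}(\log p)\,e(\alpha p^3), \]
so that by orthogonality $\mathcal{R}^*(N)=\int_0^1 f_1(\alpha)f_2(\alpha)f_3(\alpha)\,e(-\alpha N)\,d\alpha$. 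The Piatetski--Shapiro constraint is linearised via the identity $\mathbf{1}_{p\in\mathcal{P}_\gamma}=[-p^\gamma]-[-(p+1)^\gamma]$, combined with the expansion $[x]=x-\tfrac{1}{2}-\psi(x)$ and Vaaler's trigonometric approximation to $\psi$: each $f_j$ splits into a smooth density part (weighted by $\gamma_j p^{\gamma_j-1}$) plus error sums of the shape $\sum_p(\log p)\,e(\alpha p^{k_j}+\beta p^{\gamma_j})$ with $k_j\in\{1,3\}$.

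Next I carry out a Farey dissection of $[0,1]$ into major arcs $\mathfrak{M}$, defined by $|\alpha-a/q|\leq X^{-1+\varepsilon}$ with $q\leq X^\varepsilon$, and minor arcs $\mathfrak{m}=[0,1]\setminus\mathfrak{M}$, where $\varepsilon$ is a small parameter chosen in terms of $\delta_1,\delta_3$. On $\mathfrak{M}$, the linear sums $f_1,f_2$ are handled by partial summation against the density factor together with Siegel--Walfisz, while $f_3$ is treated by the standard major-arc analysis of the cubic Weyl prime sum following Hua; assembly yields the expected main term, of order $\gamma_1\gamma_2\gamma_3\,\mathfrak{S}(N,3)\,N^{\gamma_1+\gamma_2+\gamma_3/3-1}$, positive because $\mathfrak{S}(N,3)\gg 1$ for odd $N$.

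The substance of the argument is the minor-arc bound
\[ \int_{\mathfrak{m}}|f_1(\alpha)f_2(\alpha)f_3(\alpha)|\,d\alpha = o\bigl(N^{\gamma_1+\gamma_2+\gamma_3/3-1}\bigr). \]
Starting from the Parseval mean-values $\int_0^1|f_j|^2\,d\alpha\ll X_j^{\gamma_j}\log X_j$ together with the H\"older splittings
\[ \int_{\mathfrak{m}}|f_1 f_2 f_3|\,d\alpha\leq\bigl(\sup_{\mathfrak{m}}|f_j|\bigr)\Bigl(\int_0^1|f_k|^2\Bigr)^{1/2}\Bigl(\int_0^1|f_\ell|^2\Bigr)^{1/2} \]
for suitable choices of $\{j,k,\ell\}=\{1,2,3\}$, the task reduces to proving the pointwise minor-arc bounds $\sup_{\alpha\in\mathfrak{m}}|f_j(\alpha)|\ll X_j^{\gamma_j-\eta_j}$ for suitable $\eta_j>0$. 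For $j=1,2$, these are obtained by applying Heath--Brown's identity to reduce the prime sum to Type I and Type II bilinear sums, followed by the van der Corput exponent-pair method on the $\psi$-induced oscillation $\beta p^{\gamma_j}$; the arithmetic inequality $73(1-\gamma_i)+86\delta_1<9$ is exactly what is needed to produce a saving of size $X_j^{\delta_1/40}$. For $j=3$, I combine Weyl differencing (to treat the cubic phase $\alpha p^3$) with the exponent-pair method (for the Piatetski--Shapiro phase $\beta p^{\gamma_3}$) under the constraint $1714(1-\gamma_3)+1725\delta_3<46$, yielding a saving of $X_3^{\delta_3/3}$. The two remaining hypotheses $\tfrac{\gamma_1+\gamma_2}{2}+\tfrac{\delta_1}{40}>1$ and $\tfrac{\gamma_1+\gamma_2}{2}+\tfrac{\delta_3}{3}>1$ translate these pointwise savings, when inserted into the above H\"older splittings, into the required $o$-bound on the minor-arc integral.

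The main technical obstacle will be the derivation of the pointwise minor-arc estimate on $f_3$: the cubic phase forces Weyl differencing to precede any exponent-pair application on the non-polynomial $p^{\gamma_3}$ term, and carefully balancing the loss from differencing against the subsequent van der Corput gain --- when these act on the bilinear sums extracted by Heath--Brown's identity whose phase simultaneously carries $\alpha p^3$ and $\beta p^{\gamma_3}$ --- is where the technical core of the argument will lie.
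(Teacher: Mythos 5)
Your overall skeleton (circle method, linearisation of the Piatetski--Shapiro condition through $\psi$, Heath--Brown's identity with Type I/II sums and van der Corput/derivative estimates) is in the right spirit, but the arc dissection and the minor-arc strategy you propose do not fit together, and this is a structural gap rather than a technicality. You define the major arcs by $q\leqslant X^{\varepsilon}$ so that they can be handled by ``partial summation plus Siegel--Walfisz'', and you simultaneously ask for pointwise minor-arc savings $\sup_{\mathfrak m}|f_j|\ll X_j^{\gamma_j-\eta_j}$ with \emph{fixed} powers $\eta_1\approx\delta_1/40$, $\eta_3\approx\delta_3/3$. These two requirements are incompatible: near a rational $a/q$ with $q$ just above $X^{\varepsilon}$ the linear prime sums (already the classical $S_1$, a fortiori the PS-restricted $f_1,f_2$) are of size about $N/q\gg N^{1-\varepsilon}$, so no fixed-power saving can hold there; a fixed-power saving forces the minor arcs to start at $q\geqslant N^{c}$ for some fixed $c>0$, and then your major arcs contain moduli up to $N^{c}$, far beyond the reach of Siegel--Walfisz --- you would need (Piatetski--Shapiro!) primes in arithmetic progressions to moduli that are a power of $N$, with power-saving error, which is not available. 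Relatedly, your intended main term $\gamma_1\gamma_2\gamma_3\,\mathfrak S(N,3)N^{\gamma_1+\gamma_2+\gamma_3/3-1}$ requires equidistribution of PS primes in progressions on the major arcs, an ingredient your sketch does not supply.

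You have also misread what the two ``derivative-estimate'' hypotheses are for. In the paper, $73(1-\gamma_i)+86\delta_1<9$ is the condition under which (Kumchev) $T_{1,i}(N,\alpha)=S_1(N,\alpha)+O(N^{1-\delta_1-\varepsilon})$ \emph{uniformly in $\alpha$}, and $1714(1-\gamma_3)+1725\delta_3<46$ is the condition of Proposition 3.1, $T_3(N,\alpha)=S_3(N,\alpha)+O(P^{1-\delta_3-\varepsilon})$ uniformly in $\alpha$; neither is a minor-arc bound. The paper's proof never evaluates a PS main term and never needs any pointwise minor-arc estimate for the linear sums: it works with the weighted sums $T_{1,1},T_{1,2},T_3$, uses the uniform comparison estimates together with the mean values $\int_0^1|T_{1,i}|^2\ll N^{2-\gamma_i}$ and Harman's bound for the classical $S_3$ on minor arcs with $q>N^{\sigma}$ (a fixed power $\sigma\asymp\delta_1$, whence the hypothesis $\tfrac{\gamma_1+\gamma_2}{2}+\tfrac{\delta_1}{40}>1$, and $\tfrac{\gamma_1+\gamma_2}{2}+\tfrac{\delta_3}{3}>1$ from replacing $T_3$ by $S_3$), and thereby reduces the whole integral to the classical Hua integral $\int S_1^2S_3\,e(-N\alpha)\,\mathrm d\alpha+O(N^{4/3}\mathcal L^{-B})$, whose positivity is Hua's theorem. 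To repair your argument you should abandon the direct computation of a PS main term and adopt this comparison strategy: prove the uniform-in-$\alpha$ substitution results for $f_1,f_2,f_3$ (this is where your Type I/II analysis genuinely belongs) and let the classical asymptotic furnish the lower bound.
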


   From Theorem \ref{Hua's-theorem-k=3}, we know that one may require three summands to be Piatetski-Shapiro
   primes of different type. In particular, by choosing $\gamma_1=\gamma_2=\gamma_3=\gamma$, we obtain

\begin{corollary}\label{corollary-1}
  For any fixed $\frac{2816}{2825}<\gamma\leqslant1,$ every sufficiently large odd integer $N$ can be written as the sum of two primes and a cube of prime
  with all primes of the form $[n^{1/\gamma}]$.
\end{corollary}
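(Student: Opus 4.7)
The plan is to apply the Hardy--Littlewood circle method to the weighted representation function
$$R(N) = \sum_{\substack{p_1+p_2+p_3^3 = N \\ p_i\in \mathcal{P}_{\gamma_i}}}(\log p_1)(\log p_2)(\log p_3)$$
and show $R(N)\gg N^{\gamma_1+\gamma_2+\gamma_3/3}/\log^3 N$. Using the identity $\mathbf{1}_{\mathcal{P}_\gamma}(n)=\lfloor -n^\gamma\rfloor-\lfloor -(n+1)^\gamma\rfloor$ together with $\lfloor t\rfloor = t-\tfrac12-\psi(t)$, where $\psi(t)=\{t\}-\tfrac12$, I introduce
$$S_i(\alpha)=\sum_{n\leq N}\Lambda(n)\bigl(\lfloor -n^{\gamma_i}\rfloor-\lfloor -(n+1)^{\gamma_i}\rfloor\bigr) e(n\alpha)\quad (i=1,2),$$
and the analogous cubic sum $S_3(\alpha)$ over $n\leq N^{1/3}$ with argument $n^3\alpha$. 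Each $S_i$ decomposes as $T_i+U_i$, with $T_i$ carrying the smooth weight $\gamma_i n^{\gamma_i-1}$ and $U_i$ collecting the $\psi$-differences. Vaaler's lemma then replaces each $\psi$ by a trigonometric polynomial of length $H_1=N^{\delta_1}$ in $S_1,S_2$ and $H_3=N^{\delta_3/3}$ in $S_3$, with Fourier coefficients bounded by $1/h$, plus a positive majorant tail.

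A Dirichlet--Farey dissection at level $Q=N^{\tau}$ partitions $[0,1]$ into major arcs $\mathfrak{M}$ and minor arcs $\mathfrak{m}$. On $\mathfrak{M}$, the triple $T_1T_2T_3$ reduces by partial summation to Hua's classical asymptotic for $p_1+p_2+p_3^3=N$, producing the main term
$$\int_{\mathfrak{M}}T_1T_2T_3\,e(-N\alpha)\,d\alpha=\frac{9\gamma_1\gamma_2\gamma_3}{4}\,\mathfrak{S}(N,3)\,\frac{N^{\gamma_1+\gamma_2+\gamma_3/3}}{\log^3 N}(1+o(1)),$$
a genuine contribution since $\mathfrak{S}(N,3)\gg 1$ for odd $N$. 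Every mixed $\mathfrak{M}$-contribution involving at least one $U_i$ is shown to be $o$ of the above by isolating each Vaaler harmonic $e(\pm hn^{\gamma_i})$, pairing with the major-arc twist $e(an/q)$, and estimating the resulting exponential sum over primes via Vaughan's identity together with van der Corput / Kusmin--Landau $B$-process techniques; the inequalities $73(1-\gamma_i)+86\delta_1<9$ and $1714(1-\gamma_3)+1725\delta_3<46$ are exactly what keep these estimates dominant over the Vaaler truncation loss.

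The decisive step is on the minor arcs, where I would employ
$$\int_{\mathfrak{m}}|S_1 S_2 S_3|\,d\alpha\leq\Bigl(\sup_{\alpha\in\mathfrak{m}}|S_3(\alpha)|\Bigr)\Bigl(\int_0^1|S_1|^2\,d\alpha\Bigr)^{1/2}\Bigl(\int_0^1|S_2|^2\,d\alpha\Bigr)^{1/2}$$
together with its symmetric variant extracting $\sup_{\mathfrak{m}}|S_1|$ against the $L^2$-means of $S_2,S_3$, using $\int_0^1|S_i|^2\,d\alpha\asymp N^{\gamma_i}\log N$ from Parseval and the Piatetski--Shapiro prime number theorem. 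The cubic arrangement demands $\sup_{\mathfrak{m}}|S_3(\alpha)|\ll N^{\gamma_3/3}\cdot N^{-(1-(\gamma_1+\gamma_2)/2)}(\log N)^{-A}$, which matches the condition $(\gamma_1+\gamma_2)/2+\delta_3/3>1$ once the saving $\delta_3/3$ is extracted; the linear arrangement demands a comparable bound for $\sup_{\mathfrak{m}}|S_i(\alpha)|$, $i=1,2$, and the condition $(\gamma_1+\gamma_2)/2+\delta_1/40>1$ records that a Kumchev-type linear PS Weyl estimate delivers a saving of order $\delta_1/40$. In both cases, Vaughan's identity reduces each $S_i$ to type I/II bilinear sums, and then Weyl--van der Corput differencing is executed against the combined phase $\alpha n^{k}+hn^{\gamma_i}$ with $k\in\{1,3\}$.

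The chief obstacle will be the cubic minor-arc estimate, where one must simultaneously manage Vaaler truncation, the Vaughan--Heath-Brown decomposition, and cubic Weyl differencing, keeping every exponent sharp enough to fit within the numerical budget $1714(1-\gamma_3)+1725\delta_3<46$. Once that bound and its linear analogue are in place, positivity of $R(N)$ follows by comparing the main term to the aggregate of all errors; Corollary~\ref{corollary-1} then reduces to the elementary check that $\gamma_1=\gamma_2=\gamma_3=\gamma>2816/2825$ together with a suitable choice of $\delta_1,\delta_3$ satisfies all four inequalities.
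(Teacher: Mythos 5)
There is a genuine gap, and it sits precisely in the one step that is specific to Corollary \ref{corollary-1}: the closing ``elementary check''. With the linear condition in the form you use, $\frac{\gamma_1+\gamma_2}{2}+\frac{\delta_1}{40}>1$, specializing $\gamma_1=\gamma_2=\gamma_3=\gamma$ forces $\delta_1>40(1-\gamma)$, and combining this with $73(1-\gamma)+86\delta_1<9$ requires $(73+86\cdot 40)(1-\gamma)=3513(1-\gamma)<9$, i.e. $\gamma>\frac{1168}{1171}\approx 0.99744$. This does \emph{not} cover the claimed range: for $\frac{2816}{2825}<\gamma\leqslant\frac{1168}{1171}$ no admissible $\delta_1$ exists, so your argument as written proves a weaker corollary. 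The threshold $\frac{2816}{2825}$ corresponds to $2825(1-\gamma)<9$, i.e. to the sharper coupling $\frac{\gamma_1+\gamma_2}{2}+\frac{\delta_1}{32}>1$, and that coupling is exactly what the dissection in the paper produces: with $\mathfrak{M}$ at level $\mathcal{Q}=N^{\sigma}$, the minor-arc saving comes from Harman's bound $\sup_{\mathfrak{m}}|S_3|\ll N^{1/3-\sigma/16+\varepsilon}$, while the major-arc errors created by the replacement $T_{1,i}=S_1+O(N^{1-\delta_1})$ cost at most $N^{4/3+2\sigma-\delta_1}$, so one needs $\sigma/16>1-\frac{\gamma_1+\gamma_2}{2}$ together with $2\sigma<\delta_1$, whence the factor $32$. (The cubic pair of conditions is not the bottleneck: $6889(1-\gamma)<46$ holds comfortably for $\gamma>\frac{2816}{2825}$.) To reach the stated constant you must organize the argument so that the only loss against $\delta_1$ is the factor $2$ coming from the measure of $\mathfrak{M}$.

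The underlying misreading is your interpretation of the $\delta_1$-condition as recording a minor-arc sup-bound saving ``of order $\delta_1/40$'' for the linear Piatetski-Shapiro sums, and the attendant ``symmetric variant'' that extracts $\sup_{\mathfrak{m}}|S_1|$ against the $L^2$-means of $S_2,S_3$. That arrangement cannot close: it would need $\sup_{\mathfrak{m}}|S_1|\ll N^{\gamma_1+\gamma_2/2+\gamma_3/6-1-\varepsilon}$, a saving of essentially $N^{1/3}$ over the trivial bound, whereas on $\mathfrak{m}$ one only has $q>N^{\sigma}$ with $\sigma<\delta_1/2<9/172$, and Vaughan-type estimates (Lemma \ref{S-1-N-alpha}) save at most about $\min(q^{1/2},N^{1/5})$, i.e. nothing like $N^{1/3}$. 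The paper never needs such a bound: the linear PS sums enter only through the uniform comparison $T_{1,i}(N,\alpha)=S_1(N,\alpha)+O(N^{1-\delta_1})$ and through $L^2$ mean values, and the only sup ever taken on $\mathfrak{m}$ is that of the classical cubic sum $S_3$, where a tiny saving $N^{\sigma/16}$ suffices; this comparison route also spares you the heavy major-arc analysis of PS-weighted sums that your sketch requires. (A minor bookkeeping slip: with logarithmic weights the expected size of $R(N)$ is $\asymp N^{\gamma_1+\gamma_2+\gamma_3/3-1}$, not $N^{\gamma_1+\gamma_2+\gamma_3/3}/\log^3N$; at $\gamma_i=1$ your claimed main term would exceed Hua's $N^{4/3}$ by a factor of $N$.) Once the coupling is corrected to $\delta_1/32$, the specialization $\gamma_1=\gamma_2=\gamma_3=\gamma$ does yield the corollary exactly as you intend.
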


    However, the above result is not the best one. Taking $\gamma_1=\gamma_2=\gamma$ as in Corollary \ref{corollary-1}, we can enlarge the range of
    the value of $\gamma_3$ and obtain

\begin{corollary}\label{corollary-2}
  For any fixed $\frac{2816}{2825}<\gamma\leqslant1$ and $\frac{3335}{193682}<\gamma_3\leqslant1$, every sufficiently large odd integer $N$ can be
  written as the sum of two primes of the form $[n^{1/\gamma}]$ and a cube of prime
  with the prime of the form $[n^{1/\gamma_3}]$.
\end{corollary}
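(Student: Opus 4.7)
The plan is to deduce Corollary \ref{corollary-2} as a direct specialization of Theorem \ref{Hua's-theorem-k=3}, so no fresh analytic work is required. Setting $\gamma_1=\gamma_2=\gamma$ and leaving $\gamma_3$ free, I observe that the four inequalities constraining $(\delta_1,\delta_3)$ decouple: the pair comprising the first and third conditions depends only on $(\gamma,\delta_1)$, while the pair comprising the second and fourth depends only on $(\gamma,\gamma_3,\delta_3)$. The entire task therefore reduces to verifying that under the hypotheses $\gamma>2816/2825$ and the stated bound on $\gamma_3$ one can select $\delta_1,\delta_3\in(0,1)$ satisfying the two independent pairs of bounds simultaneously, and then invoke Theorem \ref{Hua's-theorem-k=3}.

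For the first pair, I would rewrite the combined constraint as $40(1-\gamma)<\delta_1<(9-73(1-\gamma))/86$ and check that the resulting open interval is non-empty and contained in $(0,1)$; this collapses to a single linear inequality in $1-\gamma$, which is exactly what the hypothesis on $\gamma$ in the corollary is designed to furnish (up to the normalization of the coefficient in the first inequality of the theorem). Any $\delta_1$ strictly inside this window is admissible. For the second pair I would analogously write $3(1-\gamma)<\delta_3<(46-1714(1-\gamma_3))/1725$, eliminate $\delta_3$, and derive a joint linear inequality in $1-\gamma$ and $1-\gamma_3$; substituting the boundary value $\gamma=2816/2825$ and solving for $1-\gamma_3$ recovers precisely the threshold $3335/193682$ that appears in the statement, confirming that the window for $\delta_3$ is genuinely non-empty and producing an admissible choice.

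The main obstacle is therefore not analytic but purely arithmetic: one must carefully bookkeep the linear combinations of the coefficients $40,73,86,9$ and $3,1714,1725,46$ appearing in the four hypotheses of the theorem, and match them against the boundary constants $2816/2825$ and $3335/193682$ stated in the corollary. All the deep analytic content of the result, namely the Piatetski-Shapiro exponential sum estimates, the Weyl-type cubic Vinogradov bound, and the major/minor arc decomposition of the singular integral, is already absorbed into Theorem \ref{Hua's-theorem-k=3}. Once the two $\delta_j$-windows are verified non-empty, a single invocation of that theorem produces the desired representation $N=p_1+p_2+p_3^3$ with $p_1,p_2\in\mathcal{P}_\gamma$ and $p_3\in\mathcal{P}_{\gamma_3}$, completing the deduction.
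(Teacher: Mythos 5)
Your proposal is correct and is essentially the paper's own (implicit) derivation: the paper offers no separate argument for this corollary beyond specializing Theorem \ref{Hua's-theorem-k=3} to $\gamma_1=\gamma_2=\gamma$ and checking that admissible $\delta_1,\delta_3$ exist, which is exactly what you do. Two points should be made explicit rather than left as hedges. First, your parenthetical ``up to the normalization of the coefficient'' hides a real issue: with the theorem's printed condition $\frac{\gamma_1+\gamma_2}{2}+\frac{\delta_1}{40}>1$ the $\delta_1$-window $40(1-\gamma)<\delta_1<\frac{9-73(1-\gamma)}{86}$ is non-empty only for $\gamma>\frac{1168}{1171}$, which does not cover all $\gamma>\frac{2816}{2825}$; the threshold $\frac{2816}{2825}$ (from $73+86\cdot32=2825$) requires the sharper condition $\frac{\gamma_1+\gamma_2}{2}+\frac{\delta_1}{32}>1$ actually established at the end of the proof of Theorem \ref{theorem-3-2}, so you must cite that form rather than the theorem statement. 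Second, eliminating $\delta_3$ from $3(1-\gamma)<\delta_3<\frac{46-1714(1-\gamma_3)}{1725}$ gives $5175(1-\gamma)+1714(1-\gamma_3)<46$, and inserting $1-\gamma<\frac{9}{2825}$ yields $1-\gamma_3<\frac{83375}{4842050}=\frac{3335}{193682}$; thus the true condition is $\gamma_3>\frac{190347}{193682}$, and the corollary's printed range ``$\frac{3335}{193682}<\gamma_3\leqslant1$'' is a misprint (a bound on $1-\gamma_3$, not on $\gamma_3$) which your computation implicitly corrects and which you should state outright rather than claim to ``recover precisely.''
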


  If we take $\gamma_1=\gamma_2=1$, then we can obtain

\begin{corollary}\label{corollary-3}
  For any fixed $\frac{1668}{1714}<\gamma\leqslant1$, every sufficiently large odd integer $N$ can be
  written as the sum of two primes and a cube of prime
  with the last prime of the form $[n^{1/\gamma}]$.
\end{corollary}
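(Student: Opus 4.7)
The plan is to deduce Corollary \ref{corollary-3} as an immediate specialisation of Theorem \ref{Hua's-theorem-k=3}. The starting observation is that when $\gamma_i=1$ the Piatetski--Shapiro set $\mathcal{P}_1=\{p=[n^{1/1}]:n\in\mathbb{N}\}$ coincides with the full set of prime numbers, since every integer $n$ satisfies $[n]=n$. Consequently, choosing $\gamma_1=\gamma_2=1$ and $\gamma_3=\gamma$ in Theorem \ref{Hua's-theorem-k=3} produces precisely the representation asserted in the corollary: $N=p_1+p_2+p_3^3$ with $p_1,p_2$ arbitrary primes and $p_3\in\mathcal{P}_\gamma$.

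The task then reduces to verifying that this specialisation of exponents is compatible with the four side conditions of Theorem \ref{Hua's-theorem-k=3}, i.e.\ to exhibiting auxiliary values $\delta_1,\delta_3\in(0,1)$ satisfying them. Substituting $\gamma_1=\gamma_2=1$ turns the first two inequalities into $1+\delta_1/40>1$ and $1+\delta_3/3>1$, which hold automatically for any positive $\delta_1,\delta_3$. The third inequality, for both $i=1$ and $i=2$, collapses to $86\delta_1<9$, so any $\delta_1\in(0,9/86)$ is admissible and imposes no constraint on $\gamma$.

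The content of the corollary is therefore concentrated in the fourth condition, which under $\gamma_3=\gamma$ reads
\[
   1714(1-\gamma)+1725\delta_3<46.
\]
Because we are free to take $\delta_3$ positive and as small as we wish, an admissible $\delta_3$ exists if and only if $46-1714(1-\gamma)>0$, that is $1-\gamma<46/1714$, or equivalently $\gamma>1668/1714$. This is exactly the hypothesis of the corollary, and one checks that the resulting upper bound for $\delta_3$ is $<1$, so indeed $\delta_3\in(0,1)$.

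There is no genuine obstacle beyond this elementary parameter check: all the analytic difficulty (the Hardy--Littlewood dissection, the evaluation of the singular series and singular integral on the major arcs, and the minor-arc bilinear and Type I/II estimates that ultimately produce the coefficients $73,86,1714,1725$) is already absorbed into Theorem \ref{Hua's-theorem-k=3}. Given $\gamma>1668/1714$ I would simply pick, for instance, $\delta_1=9/172$ together with any $\delta_3\in\bigl(0,(46-1714(1-\gamma))/1725\bigr)$, invoke Theorem \ref{Hua's-theorem-k=3} with the triple $(\gamma_1,\gamma_2,\gamma_3)=(1,1,\gamma)$, and read off the asserted solvability of $N=p_1+p_2+p_3^3$ with $p_3\in\mathcal{P}_\gamma$ for every sufficiently large odd $N$.
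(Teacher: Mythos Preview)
Your proposal is correct and follows exactly the paper's approach: Corollary \ref{corollary-3} is obtained from Theorem \ref{Hua's-theorem-k=3} by the specialisation $\gamma_1=\gamma_2=1$, $\gamma_3=\gamma$, and your parameter check confirming that the four side conditions reduce to $\gamma>1668/1714$ is accurate.
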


\textbf{Notation.} Throughout this paper,~$p,p_1,\cdots$~are
primes; $N$~always denotes a sufficiently large natural number;
$\varepsilon$~always denotes an arbitrary small positive constant, which may not be the same at different occurrences;
~$n\sim X$~means~$X<n\leqslant 2X$. We use $[x],\,\{x\}$ and $\|x\|$ to denote the integral part of $x$, the fractional part of $x$ and
 the distance from $x$ to the nearest integer correspondingly. $\Lambda(n)$~denotes von Mangold's function;
~$\mu(n)$~denotes M\"{o}bius function;~$e(x)=e^{2\pi ix};$~$\mathcal{L}=\log N$;~$\psi(x)=x-[x]-\frac{1}{2}$.
 $f(x)\ll g(x)$ means that $f=O(g(x))$; $f(x)\asymp g(x)$ means that $f(x)\ll g(x)\ll f(x)$.

   We also define
\begin{equation*}
  P=N^{1/3},\quad S_1(N,\alpha)=\sum_{p\leqslant N}e(\alpha p),\quad S_3(N,\alpha)=\sum_{p\leqslant P}e(\alpha p^3),
\end{equation*}
\begin{equation*}
  T_1(N,\alpha)=\frac{1}{\gamma}\sum_{\substack{p\leqslant N\\ p\in\mathcal{P}_\gamma}}p^{1-\gamma}e(\alpha p), \quad
  T_3(N,\alpha)=\frac{1}{\gamma}\sum_{\substack{p\leqslant P\\ p\in\mathcal{P}_\gamma}}p^{1-\gamma}e(\alpha p^3),
\end{equation*}
\begin{equation*}
  T_{1,i}(N,\alpha)=\frac{1}{\gamma_i}\sum_{\substack{p\leqslant N\\ p\in\mathcal{P}_{\gamma_i}}}p^{1-\gamma_i}e(\alpha p), \qquad (i=1,2).
\end{equation*}

\section{Preliminary Lemmas }
\begin{lemma}\label{Dirichlet}
   For any real numbers $\alpha$ and $\tau\geqslant1$, there must be integers $a$ and $q$, $(a,q)=1,\,1\leqslant q\leqslant\tau$, such that
   \begin{equation}\label{Diophantine-explicit}
      \alpha=\frac{a}{q}+\lambda,\qquad   \left|\lambda\right|\leqslant\frac{1}{q\tau}.
   \end{equation}
\end{lemma}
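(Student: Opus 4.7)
The plan is to derive this as a standard application of the pigeonhole principle, in the classical Dirichlet style. First I would reduce to the case where the approximating denominator bound is an integer by setting $Q=\lfloor\tau\rfloor\geqslant 1$; since $Q\leqslant\tau$, any bound of the shape $1/(qQ)$ will automatically give the claimed bound $1/(q\tau)$. The task is then to produce coprime integers $a,q$ with $1\leqslant q\leqslant Q$ and $|\alpha-a/q|\leqslant 1/(qQ)$.

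Next I would look at the $Q+1$ real numbers $\{0\cdot\alpha\},\{1\cdot\alpha\},\ldots,\{Q\alpha\}$, all lying in the half-open interval $[0,1)$, and partition $[0,1)$ into $Q$ subintervals of length $1/Q$. By pigeonhole, two of these fractional parts, say $\{j\alpha\}$ and $\{k\alpha\}$ with $0\leqslant j<k\leqslant Q$, fall in a common subinterval, so there is an integer $m$ with $|(k-j)\alpha-m|\leqslant 1/Q$. Putting $q'=k-j$ and $a'=m$ yields $1\leqslant q'\leqslant Q$ and $|\alpha-a'/q'|\leqslant 1/(q'Q)$, which is exactly the desired inequality but without the coprimality condition.

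Finally I would enforce $(a,q)=1$ by cancelling the common factor: setting $d=\gcd(a',q')$, $a=a'/d$, $q=q'/d$, one has $a/q=a'/q'$, so the equality $\alpha=a/q+\lambda$ holds with the same $\lambda$, and since $q\leqslant q'\leqslant Q\leqslant\tau$, the bound becomes $|\lambda|\leqslant 1/(q'Q)\leqslant 1/(q\tau)$, as required. (If $\alpha$ happens to be rational with a representation $a/q$ having $q\leqslant\tau$, one may simply take $\lambda=0$.)

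There is really no serious obstacle here; this is the textbook form of Dirichlet's theorem, and the only point that might warrant care is making sure that after reducing the fraction $a'/q'$ to lowest terms the error bound $1/(q'\tau)$ is replaced by the (larger) bound $1/(q\tau)$ rather than being weakened, which follows immediately from $q\leqslant q'$.
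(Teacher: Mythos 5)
Your closing coprimality step (dividing out $d=\gcd(a',q')$ and using $q\leqslant q'$) is fine, but the opening reduction contains a genuine error which the rest of the argument inherits. You set $Q=\lfloor\tau\rfloor$ and assert that, because $Q\leqslant\tau$, a bound of the shape $1/(qQ)$ ``automatically gives'' the claimed bound $1/(q\tau)$. The implication runs the other way: $Q\leqslant\tau$ means $1/(qQ)\geqslant 1/(q\tau)$, so the pigeonhole conclusion $|\alpha-a'/q'|\leqslant 1/(q'Q)$ is \emph{weaker} than what the lemma asserts whenever $\tau$ is not an integer. Concretely, take $\tau=2.9$, so $Q=2$, and $\alpha=0.4$: among $\{0\},\{0.4\},\{0.8\}$ the only pair falling in a common interval of length $1/2$ is $j=0,k=1$, and your construction outputs $q'=1$, $a'=0$ with error $0.4$, which exceeds the required $1/\tau\approx 0.345$. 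The lemma is of course still true in this example (take $q=2$, $a=1$), but your argument does not produce such a pair; it only proves the weaker statement with $1/(q\lfloor\tau\rfloor)$ in place of $1/(q\tau)$.

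The standard repair is the sharper form of the pigeonhole argument. Put $N=\lfloor\tau\rfloor$, partition $[0,1)$ into the $N+1$ intervals $[i/(N+1),(i+1)/(N+1))$, $0\leqslant i\leqslant N$, and consider the $N+1$ numbers $\{j\alpha\}$, $0\leqslant j\leqslant N$. If two of them lie in one interval, you obtain $1\leqslant q\leqslant N\leqslant\tau$ and $|q\alpha-a|<1/(N+1)<1/\tau$, since $N+1>\tau$. Otherwise every interval contains exactly one of the points; as $\{0\cdot\alpha\}=0$ occupies the first interval, some $j_0$ with $1\leqslant j_0\leqslant N$ has $\{j_0\alpha\}$ in the last interval, whence $|j_0\alpha-(\lfloor j_0\alpha\rfloor+1)|\leqslant 1/(N+1)<1/\tau$. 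In either case you get $1\leqslant q\leqslant\tau$ and $|q\alpha-a|\leqslant 1/\tau$, and your gcd reduction then finishes the proof. (For comparison, the paper does not argue at all here: it simply cites Pan and Pan, Lemma 5.19, i.e.\ exactly this strong form of Dirichlet's approximation theorem.)
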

\begin{proof}
  See C. D. Pan and C. B. Pan \cite{Pan-Pan}, Lemma 5.19.
\end{proof}

\begin{lemma}\label{S-1-N-alpha}
   Let $\alpha$ be as in Lemma \ref{Dirichlet}. Then
   \begin{equation*}
      S_{1}(N,\alpha)\ll N\mathcal{L}^4\Bigg(\frac{1}{q^{1/2}}+\frac{1}{N^{1/5}}+\frac{q^{1/2}}{N^{1/2}}\Bigg).
   \end{equation*}
\end{lemma}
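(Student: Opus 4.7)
The plan is to establish this classical Vinogradov-type bound by the Vaughan decomposition. First, via partial summation and the trivial estimate $O(N^{1/2})$ for the contribution of prime powers $p^k$ with $k\ge 2$, I would reduce the bound on $S_1(N,\alpha)=\sum_{p\le N}e(\alpha p)$ to a bound on the von Mangoldt weighted sum $\sum_{n\le N}\Lambda(n)e(\alpha n)$, at the cost of a single factor of $\log N$. To this weighted sum I would apply Vaughan's identity with parameters $U$ and $V$ to be optimized, rewriting it as a linear combination of \textbf{Type I} sums
$$\sum_{m\le UV} a_m \sum_{n\le N/m} e(\alpha m n), \qquad |a_m|\ll (\log m)\,\tau(m),$$
and \textbf{Type II} sums of the shape
$$\sum_{U<m\le N/V}\sum_{V<n\le N/m} a_m b_n\, e(\alpha m n),$$
with coefficients bounded by suitable divisor-type functions.

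For the Type I sums, the inner geometric progression is bounded by $\min(N/m,\|\alpha m\|^{-1})$, so after absorbing divisor weights I would invoke the standard consequence of Lemma~\ref{Dirichlet},
$$\sum_{m\le M}\min\!\Big(\frac{N}{m},\frac{1}{\|\alpha m\|}\Big)\ll\Big(\frac{N}{q}+M+q\Big)\log(qN),$$
giving a Type I contribution of order $(N/q+UV+q)\mathcal{L}^{O(1)}$. For the Type II sums, I would apply the Cauchy--Schwarz inequality in the outer variable, expand the square, and swap the order of summation to reduce matters to an estimate of the form $\sum_{|h|\le N/m}\min(N/m,\|\alpha h\|^{-1})$; the same lemma then yields a Type II contribution of
$$\ll\mathcal{L}^{O(1)}\bigg(\frac{N}{\sqrt U}+\sqrt{N\Big(\frac{N}{q}+q\Big)}\,\bigg).$$

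Finally I would choose $U=V=N^{2/5}$, which makes $UV=N^{4/5}$ and $N/\sqrt U=N^{4/5}$ simultaneously. The resulting terms $Nq^{-1/2}$, $N^{4/5}$ and $(Nq)^{1/2}$ then combine and, after accommodating the extra $\log N$ lost in the initial reduction and the logarithmic factors from the Type I/II estimates, produce the bound
$$S_1(N,\alpha)\ll N\mathcal{L}^4\bigg(\frac{1}{q^{1/2}}+\frac{1}{N^{1/5}}+\frac{q^{1/2}}{N^{1/2}}\bigg).$$
The only delicate point is the careful bookkeeping of the logarithmic factors through Vaughan's identity, Cauchy--Schwarz and partial summation; the characteristic exponent $1/5$ in the middle term is forced by the balance $U=V=N^{2/5}$ between the Type I and Type II contributions.
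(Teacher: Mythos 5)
Your sketch is correct and is essentially the same argument the paper relies on: the paper does not prove this lemma itself but cites Vaughan, Theorem 3.1, whose proof is precisely your route (Vaughan's identity, Type I sums handled via $\sum_{m\le M}\min(N/m,\|\alpha m\|^{-1})\ll (N/q+M+q)\log(qN)$, Type II sums via Cauchy--Schwarz, and the choice $U=V=N^{2/5}$ forcing the middle term $N^{4/5}=N\cdot N^{-1/5}$). The only bookkeeping you must add is the passage from the $\Lambda$-weighted sum of Vaughan's theorem to $S_1(N,\alpha)=\sum_{p\le N}e(\alpha p)$ by partial summation and the $O(N^{1/2})$ prime-power contribution, which is comfortably absorbed into the $\mathcal{L}^4$ factor.
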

\begin{proof}
  See Vaughan \cite{Vaughan}, Theorem 3.1.
\end{proof}

\begin{lemma}\label{S-3-N-alpha}
   Let $\alpha$ be as in Lemma \ref{Dirichlet}. Then
   \begin{equation*}
      S_{3}(N,\alpha)\ll N^{1/3+\varepsilon}\Bigg(\frac{1}{q}+\frac{1}{N^{1/6}}+\frac{q}{N}\Bigg)^{1/16}.
   \end{equation*}
\end{lemma}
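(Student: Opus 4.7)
The plan is to prove this Weyl-type bound by the standard Vaughan identity argument for cubic exponential sums over primes, in the spirit of Vaughan \cite{Vaughan}, Chapter 3. Partial summation first replaces the indicator of primes by the von Mangoldt weight, reducing the task to a matching bound for
\begin{equation*}
   W(\alpha) := \sum_{n\leq P}\Lambda(n)e(\alpha n^3).
\end{equation*}
Vaughan's identity with a splitting parameter $U$ (eventually optimised at $U = P^{1/2}$) together with a dyadic subdivision expresses $W(\alpha)$ as an $O(\mathcal{L}^2)$ combination of Type I sums
\begin{equation*}
   T_I(M) = \sum_{m\sim M}a_m\sum_{n\leq P/m}e(\alpha m^3 n^3), \qquad M\leq U^2,
\end{equation*}
and Type II sums
\begin{equation*}
   T_{II}(M) = \sum_{m\sim M}\sum_{n\sim P/m}a_m b_n e(\alpha m^3 n^3), \qquad U\leq M\leq P/U,
\end{equation*}
with coefficient sequences satisfying $|a_m|,|b_n|\ll\mathcal{L}$.

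For the Type I piece I would fix $m$ and apply Weyl's inequality to the cubic phase $\alpha m^3 n^3$ in $n$, which produces a saving in terms of a Diophantine approximation to $\alpha m^3$; the hypothesised approximation $\alpha = a/q + \lambda$, $|\lambda|\leq 1/(q\tau)$, is propagated to $\alpha m^3$ and then averaged over $m$. For the Type II piece I would apply Cauchy--Schwarz in the outer variable $m$ to obtain
\begin{equation*}
   |T_{II}(M)|^2 \ll M\mathcal{L}^2\sum_{n_1,n_2\sim P/M}\bigg|\sum_{m\sim M}e\big(\alpha m^3(n_1^3 - n_2^3)\big)\bigg|,
\end{equation*}
and then Weyl-difference in $m$ to collapse the cubic phase to a linear one, bounding the resulting sum via the standard treatment of $\sum_h\min\bigl(M,\|\alpha h(n_1^3 - n_2^3)\|^{-1}\bigr)$. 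The extra Cauchy--Schwarz step, together with the halving of the Weyl exponent it induces, is responsible for the exponent $1/16$ appearing in the final bound.

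Finally, the choice $U = P^{1/2} = N^{1/6}$ balances the Type I loss against the Type II loss and is precisely what produces the middle term $N^{-1/6}$ inside the brackets, while the outer and inner terms $1/q$ and $q/N$ reflect the underlying Dirichlet approximation of $\alpha$ and the cubic Weyl geometry over the range $n^3\leq N$. The most delicate part of the bookkeeping, and the main technical obstacle, is the uniform propagation of the rational approximation to $\alpha$ through the successive changes of variable $\alpha\mapsto\alpha m^3\mapsto\alpha m^3(n_1^3 - n_2^3)$, so that the final estimate depends on $q$ only through the stated combination $(1/q + N^{-1/6} + q/N)^{1/16}$. This is routine but non-trivial, and the statement here is a direct adaptation to cubic primes of the bound proved in Vaughan \cite{Vaughan}, Chapter 3.
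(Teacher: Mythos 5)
The paper does not reprove this bound at all: it is quoted verbatim from Harman \cite{Harman}, Theorem 1, which for $k$-th powers gives $\sum_{p\leq X}e(\alpha p^k)\ll X^{1+\varepsilon}\big(q^{-1}+X^{-1/2}+qX^{-k}\big)^{4^{1-k}}$; with $k=3$ and $X=P=N^{1/3}$ this is exactly the statement. Your proposal instead sketches a from-scratch proof, and while the general strategy you describe (Vaughan's identity, Type I/Type II splitting, Weyl differencing) is indeed the skeleton of Harman's argument, the sketch has a genuine gap precisely where the exponent $1/16$ has to come from. After your Cauchy--Schwarz step the inner sum $\sum_{m\sim M}e\big(\alpha m^3(n_1^3-n_2^3)\big)$ is a cubic Weyl sum, and a single ``halving'' of the classical Weyl saving $1/4$ would give $1/8$, not $1/16$; moreover the quality of the rational approximation to $\alpha(n_1^3-n_2^3)$ varies with $(n_1,n_2)$, and the averaging over these pairs (equivalently over $h=n_1^3-n_2^3$, which runs over a sparse, multiplicity-weighted set) is exactly the delicate point where further losses occur and where Harman's bookkeeping produces $4^{1-k}$ uniformly in the stated form $\big(1/q+N^{-1/6}+q/N\big)^{1/16}$. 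Asserting that ``the extra Cauchy--Schwarz step \dots is responsible for the exponent $1/16$'' does not close this.

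Two further points. First, the parameter choice $U=P^{1/2}$ does not work as described: with Vaughan's identity the Type I range then extends up to $m\leq U^2=P$, where the inner variable is too short for Weyl's inequality to give anything, so a different splitting (and a genuine treatment of the bilinear ranges) is needed. Second, the attribution to Vaughan \cite{Vaughan}, Chapter 3 is off: Theorem 3.1 there is the linear case $k=1$ (it is the paper's Lemma \ref{S-1-N-alpha}); the cubic analogue is not a ``direct adaptation'' recorded there but is precisely Harman's Theorem 1, which is the reference the paper relies on. If you intend to cite rather than prove, cite \cite{Harman}; if you intend to prove, the Type II averaging described above must be carried out in detail.
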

\begin{proof}
  See Harman \cite{Harman}, Theorem 1.
\end{proof}

\begin{lemma}\label{T_1=S_1+O}
   let $\gamma,\,\delta_1$ satisfy $0<\gamma\leqslant1,\,\delta_1>0$ and
   \begin{equation*}
      73(1-\gamma)+86\delta_1<9.
   \end{equation*}
   Then, uniformly in $\alpha$, we have
   \begin{equation*}
      T_{1}(N,\alpha)=S_{1}(N,\alpha)+O\big(N^{1-\delta_1-\varepsilon}\big)
   \end{equation*}
   where the implied constant depends only on $\gamma$ and $\delta_1$.
\end{lemma}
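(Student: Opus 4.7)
The strategy is to express the condition $p\in\mathcal{P}_\gamma$ analytically and then separate a smooth main term that reconstructs $S_1(N,\alpha)$ from an oscillatory remainder controlled by an exponential-sum estimate. Since $p=[n^{1/\gamma}]$ for some integer $n$ exactly when $[p^\gamma,(p+1)^\gamma)$ contains an integer, and for large $p$ it contains at most one, we have
\[
\mathbf{1}_{p\in\mathcal{P}_\gamma}=[-p^\gamma]-[-(p+1)^\gamma]=(p+1)^\gamma-p^\gamma+\psi(-(p+1)^\gamma)-\psi(-p^\gamma),
\]
after substituting $[x]=x-\psi(x)-\tfrac12$. Because $(p+1)^\gamma-p^\gamma=\gamma p^{\gamma-1}+O(p^{\gamma-2})$, the contribution of the smooth part to $T_1(N,\alpha)$ equals $S_1(N,\alpha)+O(\log N)$ once multiplied by $p^{1-\gamma}/\gamma$, which is admissible. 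It thus suffices to prove that
\[
R(\alpha):=\frac{1}{\gamma}\sum_{p\leqslant N}p^{1-\gamma}e(\alpha p)\bigl(\psi(-(p+1)^\gamma)-\psi(-p^\gamma)\bigr)\ll N^{1-\delta_1-\varepsilon}.
\]

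To estimate $R(\alpha)$, I would replace $\psi$ by its Vaaler Fourier approximation truncated at a height $H$ (to be optimized), so that, after using partial summation to absorb the weight $p^{1-\gamma}$ (the factor $\gamma p^{\gamma-1}$ arising from differentiation cancels $p^{1-\gamma}/\gamma$), the problem reduces to bounding, uniformly in $1\leqslant|h|\leqslant H$ and in $\alpha$, the prime exponential sum
\[
W(h,\alpha)=\sum_{n\sim N_1}\Lambda(n)e(\alpha n+h n^\gamma),\qquad N_1\leqslant N.
\]
The difference $\psi(-(p+1)^\gamma)-\psi(-p^\gamma)$ together with the Vaaler error are handled in the standard way via the bound $(p+1)^\gamma-p^\gamma\asymp p^{\gamma-1}$ and the mean-square estimate for the Vaaler kernel, yielding admissible contributions on the order of $N^{1-\gamma+\varepsilon}+N^{1+\varepsilon}/H$.

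Next, I would apply Heath--Brown's identity (or Vaughan's identity) to decompose $W(h,\alpha)$ into $O(\log^c N)$ bilinear sums of Type I and Type II shape with phase $f(n)=\alpha n+h n^\gamma$. For Type I sums $\sum_{m\sim M}a_m\sum_{n}e(f(mn))$ one applies the Kuzmin--Landau lemma or a suitable exponent pair to the inner sum in $n$, exploiting that $f''(x)=h\gamma(\gamma-1)x^{\gamma-2}$ has the right order of magnitude and is independent of $\alpha$; for Type II sums one applies Cauchy--Schwarz followed by van der Corput differencing, after which the linear term $\alpha n$ collapses into an additive constant in each difference class and only the genuinely non-linear part survives. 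Balancing the Type I and Type II ranges and optimizing $H$ along the lines of \cite{Kumchev,Jia-3} yields a saving of the form $W(h,\alpha)\ll N^{1-\eta}$ which, together with the Vaaler truncation, gives $R(\alpha)\ll N^{1-\delta_1-\varepsilon}$ precisely under the stated numerical constraint $73(1-\gamma)+86\delta_1<9$.

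The principal obstacle is the uniformity in $\alpha$: no Diophantine information on $\alpha$ is available, so every cancellation in $W(h,\alpha)$ must originate from the non-linear phase $h n^\gamma$. This forces $H$ to be chosen fairly large and, more importantly, pushes all the work into the Type II estimate, where the Cauchy--Schwarz step and the subsequent van der Corput differencing must simultaneously eliminate the $\alpha$-dependence and extract power savings from the rather small derivative $\gamma(\gamma-1)h n^{\gamma-2}$. The exact exponents $73$, $86$, $9$ in the hypothesis are what emerge from balancing these competing constraints.
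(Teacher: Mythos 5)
Your outline reproduces the standard route that underlies the paper's one-line citation: the paper does not prove this lemma at all, but refers to the method of Kumchev (1997), Theorem 2, and that method is exactly what you describe --- detect $p\in\mathcal{P}_\gamma$ via $[-p^\gamma]-[-(p+1)^\gamma]$, peel off the smooth part $(p+1)^\gamma-p^\gamma$ to recover $S_1(N,\alpha)$, expand $\psi$ in a truncated Fourier series, remove the weight $p^{1-\gamma}$ by partial summation, and then decompose the resulting prime sum with phase $\alpha n+hn^\gamma$ by Heath--Brown's identity into Type I and Type II sums. This is also the same template the paper carries out in full detail for the cubic analogue (Lemmas \ref{type-II} and \ref{type-I} and Proposition \ref{proposition-T3-S3}), so in spirit your plan is the right one.

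However, as a proof it has a genuine gap: everything that actually produces the hypothesis $73(1-\gamma)+86\delta_1<9$ is asserted rather than derived. You never state or prove the bilinear estimates themselves --- there is no spacing lemma counting solutions of $|h_1n_1^\gamma-h_2n_2^\gamma|\leqslant\Delta$ (the analogue of Lemma \ref{Heath-Brown-1-lemma-2}), no explicit van der Corput/exponent-pair computation for the Type I sums, no specification of the admissible ranges of $M$ in the two regimes, and no optimization of $Q$, $J$, $H$; the sentence that the balancing works ``precisely under the stated numerical constraint'' is exactly the content of the lemma and cannot be checked from what you wrote. In particular, your remark that in the Type II case the linear term ``collapses into an additive constant in each difference class'' is where the uniformity in $\alpha$ must be earned: after Cauchy in the long variable the phase still contains $\alpha$ through terms of the shape $\alpha m(n_1-n_2)$, and one must difference or differentiate in $m$ enough times to annihilate it (compare the paper's cubic case, where a fourth derivative in $m$ kills $\alpha m^3(n_1^3-n_2^3)$ before Lemma \ref{Heath-Brown-3-Theorem-1} is applied); this step, and the loss it entails, is precisely what determines the exponents $73$, $86$, $9$, so omitting it leaves the lemma unproved. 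A complete write-up would either carry out these estimates (as Kumchev does) or, as the paper chooses to do, cite Kumchev's Theorem 2 explicitly and verify that the constant there translates into the stated condition on $\gamma$ and $\delta_1$.
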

\begin{proof}
  This is, all in essentials, deduced from the process of the proof of Kumchev \cite{Kumchev-1997}  Theorem 2.
\end{proof}

\begin{lemma}\label{T_1-square-mean-value}
   We have
   \begin{equation*}
     \int_{0}^1 \big|T_{1}(N,\alpha)\big|^2\mathrm{d}\alpha\ll N^{2-\gamma}.
   \end{equation*}
\end{lemma}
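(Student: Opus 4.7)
The plan is to apply Parseval's identity (equivalently, expand the square and integrate term by term using orthogonality of $e(\alpha m)$). Writing
\[
 \bigl|T_1(N,\alpha)\bigr|^2=\frac{1}{\gamma^2}\sum_{\substack{p_1,p_2\le N\\ p_1,p_2\in\mathcal{P}_\gamma}}(p_1p_2)^{1-\gamma}e\bigl(\alpha(p_1-p_2)\bigr),
\]
and integrating over $\alpha\in[0,1]$, only the diagonal contribution $p_1=p_2$ survives, so
\[
 \int_0^1\bigl|T_1(N,\alpha)\bigr|^2\,\mathrm{d}\alpha=\frac{1}{\gamma^2}\sum_{\substack{p\le N\\ p\in\mathcal{P}_\gamma}}p^{2-2\gamma}.
\]

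The second and final step is to bound this diagonal sum. Since $0<\gamma\le1$, one has $p^{2-2\gamma}\le N^{2-2\gamma}$ for every $p\le N$, reducing the task to a counting bound for $\pi_\gamma(N):=\#\{p\le N:p\in\mathcal{P}_\gamma\}$. A trivial observation suffices here: if $p=[n^{1/\gamma}]\le N$, then $n\le(N+1)^\gamma$, so $\pi_\gamma(N)\ll N^\gamma$ (no sieving or Piatetski--Shapiro asymptotic is needed). Combining these two bounds gives
\[
 \int_0^1\bigl|T_1(N,\alpha)\bigr|^2\,\mathrm{d}\alpha\ll N^{2-2\gamma}\cdot N^\gamma = N^{2-\gamma},
\]
as required.

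There is no real obstacle; this is a pure Parseval plus trivial counting argument. The only thing to notice is that the weight $\gamma^{-1}p^{1-\gamma}$ in $T_1(N,\alpha)$ is precisely what makes the bound $N^{2-\gamma}$ (rather than $N^{2-2\gamma}\cdot\pi_\gamma(N)$ after squaring a smaller weight) come out cleanly, but even so the estimate is squeezed out of nothing more than $p\le N$ and the trivial size of $\mathcal{P}_\gamma\cap[1,N]$.
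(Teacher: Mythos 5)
Your argument is correct: orthogonality kills the off-diagonal terms, and the diagonal sum $\gamma^{-2}\sum_{p\le N,\,p\in\mathcal{P}_\gamma}p^{2-2\gamma}$ is indeed $\ll N^{2-2\gamma}\cdot N^{\gamma}$ by the trivial count $\pi_\gamma(N)\ll N^{\gamma}$ (each such prime comes from some $n\le (N+1)^{\gamma}$). The paper itself gives no argument here but simply cites Cui \cite{Cui}, Lemma 6, and your Parseval-plus-trivial-counting proof is the standard, self-contained way to obtain that bound, so there is no substantive difference in approach.
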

\begin{proof}
  See Cui \cite{Cui}, Lemma 6.
\end{proof}

\begin{lemma}\label{Jia-0-Lemma-1}
   Suppose that $f(x):[a,b]\rightarrow \mathbb{R}$ has continuous derivatives of order up
   to $2$ on $[a,b]$, where $1\leqslant a<b\leqslant2a$. Suppose further that
   \begin{equation*}
      0<c_1\lambda_1\leqslant|f'(x)|\leqslant c_2\lambda_1,\quad c_3\lambda_1 a^{-1}\leqslant|f''(x)|\leqslant c_4\lambda_1 a^{-1},\quad x\in[a,b],
   \end{equation*}
   where~$c_j\,(j=1,2,3,4)$~are absolute constants. Then
   \begin{equation}\label{van-de-corput-2}
     \sum_{a<n\leqslant b}e\big(f(n)\big)\ll a^{1/2}\lambda_1^{1/2}+\lambda_1^{-1}.
   \end{equation}
   If $c_2\lambda_1\leqslant1/2,$ then we have
    \begin{equation}\label{van-de-corput-1}
     \sum_{a<n\leqslant b}e\big(f(n)\big)\ll \lambda_1^{-1}.
   \end{equation}
\end{lemma}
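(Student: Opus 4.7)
The plan is to derive both bounds from the classical Van der Corput / Kuzmin--Landau machinery for exponential sums with smooth phase, exploiting both the upper and lower control on $|f'|$ and $|f''|$ given in the hypotheses.

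For the refined estimate (\ref{van-de-corput-1}), valid when $c_2\lambda_1\leqslant 1/2$, I would apply the Kuzmin--Landau inequality. Since $|f''|$ is bounded away from $0$, $f'$ is strictly monotone on $[a,b]$; combined with $|f'(x)|\leqslant 1/2$ this forces $\|f'(x)\|=|f'(x)|\geqslant c_1\lambda_1$. The Kuzmin--Landau inequality (a routine consequence of Abel summation applied to the elementary bound $|\sum_{n\leqslant x}e(\alpha n)|\ll \|\alpha\|^{-1}$) then yields $\sum_{a<n\leqslant b}e(f(n))\ll (c_1\lambda_1)^{-1}\ll \lambda_1^{-1}$, as required.

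For (\ref{van-de-corput-2}) in full generality, I would use truncated Poisson summation, writing
\begin{equation*}
\sum_{a<n\leqslant b}e\bigl(f(n)\bigr)=\sum_{\nu\in\mathbb{Z}}\int_a^b e\bigl(f(x)-\nu x\bigr)\,\mathrm{d}x + O(1),
\end{equation*}
and splitting the integrals according to the position of $\nu$ relative to $f'([a,b])$. When $\nu$ lies outside a fixed neighbourhood of this range (in particular when $\nu=0$), the phase $f(x)-\nu x$ has no stationary point and $|f'(x)-\nu|\gg \max(\lambda_1,|\nu|)$; the first derivative test (integration by parts once, using the monotonicity of $f'-\nu$ coming from $|f''|\asymp\lambda_1/a$) produces integrals whose total contribution is $\ll \lambda_1^{-1}$. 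When $\nu$ lies inside that neighbourhood, the equation $f'(x_\nu)=\nu$ has a unique solution in $[a,b]$, and since $|f''(x_\nu)|\asymp \lambda_1/a$, the standard stationary phase estimate delivers a contribution $\ll (a/\lambda_1)^{1/2}$ per such $\nu$. The number of admissible $\nu$ is $O(\lambda_1)$, so their total contribution is $\ll \lambda_1\cdot(a/\lambda_1)^{1/2}=a^{1/2}\lambda_1^{1/2}$. Summing the two pieces yields the claimed bound.

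The main technical obstacle is the careful bookkeeping of the error terms in the stationary phase expansion, so that the implied constants end up depending only on $c_1,\dots,c_4$. The improvement from the usual Van der Corput B-process error $a^{1/2}\lambda_1^{-1/2}$ to the sharper $\lambda_1^{-1}$ stated here is exactly what the lower bound $|f'(x)|\geqslant c_1\lambda_1$ buys us: it promotes the non-stationary Fourier coefficients from the trivial $O(\lambda_2^{-1/2})$ saddle-point-error bound to the stronger first-derivative bound $O(\lambda_1^{-1})$.
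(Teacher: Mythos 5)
The paper offers no proof of this lemma --- it simply cites Jia's Lemma 1 --- so the only question is whether your argument stands on its own. Your treatment of (\ref{van-de-corput-1}) is correct: $f''$ is continuous and nonvanishing, hence of constant sign, so $f'$ is monotone; together with $0<c_1\lambda_1\leqslant|f'|\leqslant c_2\lambda_1\leqslant 1/2$ this gives $\|f'(x)\|\geqslant c_1\lambda_1$, and Kuzmin--Landau yields $\ll\lambda_1^{-1}$. (Your parenthetical account of why Kuzmin--Landau holds is loose --- it is not literally Abel summation against $\sum_{n\leqslant x}e(\alpha n)\ll\|\alpha\|^{-1}$, but rather a partial-summation argument using the monotonicity of $f(n+1)-f(n)$; since the inequality is standard and quotable this is only a cosmetic point.)

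The bookkeeping in your proof of (\ref{van-de-corput-2}) has a genuine gap. After Poisson summation, the first-derivative test bounds each non-stationary integral by $\ll 1/\max\big(\lambda_1,\mathrm{dist}(\nu,f'([a,b]))\big)$, and the sum of these bounds over $\nu\in\mathbb{Z}$ diverges logarithmically; it is certainly not $\ll\lambda_1^{-1}$, as you claim. Even in the correct truncated form of the B-process (compare Lemma \ref{Jia-0-Lemma-3-1} of the paper) the error contains a term $\log(2+\lambda_1)$ and endpoint terms $\min\big((a/\lambda_1)^{1/2},\|f'(a)\|^{-1}\big)$, neither of which is $O(\lambda_1^{-1})$ in general. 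Similarly, ``the number of admissible $\nu$ is $O(\lambda_1)$'' should read $O(1+\lambda_1)$: for $\lambda_1\ll1$ a single stationary frequency would contribute $(a/\lambda_1)^{1/2}$, which exceeds $a^{1/2}\lambda_1^{1/2}$; your argument survives only because under the hypotheses no stationary point exists in that regime (when $c_2\lambda_1<1/2$ the range of $f'$ avoids all integers). The clean repair uses tools you already have: if $c_2\lambda_1\leqslant1/2$, then (\ref{van-de-corput-1}) itself implies (\ref{van-de-corput-2}); if $c_2\lambda_1>1/2$, then $\lambda_1\gg1$ and the ordinary van der Corput second-derivative test, with $|f''|\asymp\lambda_1/a$ on an interval of length at most $a$, already gives $\ll a^{1/2}\lambda_1^{1/2}+a^{1/2}\lambda_1^{-1/2}\ll a^{1/2}\lambda_1^{1/2}$, all logarithmic and endpoint losses being absorbed or avoided --- no stationary-phase expansion is needed at all.
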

\begin{proof}
  See Jia \cite{Jia-0}, Lemma 1.
\end{proof}

\begin{lemma}\label{Heath-Brown-3-Theorem-1}
  Let $k\geqslant3$ be an integer, and suppose that $f(x):[a,b]\rightarrow \mathbb{R}$ has
continuous derivatives of order up to $k$ on $[a,b]$, where $1\leqslant a<b\leqslant2a$. Suppose further that
 \begin{eqnarray*}
   0<\lambda_k\leqslant\big| f^{(k)}(x)\big|\leqslant A\lambda_k,\quad x\in [a,b].
 \end{eqnarray*}
 Then
 \begin{equation*}
    \sum_{a<n\leqslant b}e\big(f(n)\big)\ll_{A,k,\varepsilon} N^{1+\varepsilon}
    \Big( \lambda_{k}^{1/k(k-1)}+N^{-1/k(k-1)}+N^{-2/k(k-1)}\lambda_{k}^{-2/k^2(k-1)} \Big).
 \end{equation*}
\end{lemma}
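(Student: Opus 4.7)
The statement is Heath-Brown's sharp $k$-th derivative estimate for exponential sums, so the anticipated proof in the paper is simply a citation; nevertheless, the strategy behind the bound, which one would follow if forced to reproduce it, is to combine van der Corput's $A$-process with a mean value estimate of Vinogradov type. The plan is to apply the $A$-process $k-2$ times to the sum $\sum_{n}e(f(n))$, obtaining an average over shift parameters $h_1,\ldots,h_{k-2}$ of inner exponential sums whose phase has an effective second derivative of size comparable to $\lambda_k h_1\cdots h_{k-2}a^{-(k-2)}$. At each step the new phase function inherits smoothness from $f$, so the derivative hypothesis is preserved up to explicit constants and admissible ranges of the $h_j$.

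The key ingredient is then a sharp bound for the resulting averaged sum, for which Heath-Brown invokes his improved form of Vinogradov's mean value theorem: rather than estimating each shifted sum individually via a second-derivative test like Lemma \ref{Jia-0-Lemma-1}, one raises the outer sum to a high power $2s$, integrates over the shift parameters, and bounds the resulting $2s$-th moment by a Vinogradov mean value estimate. The three exponents $1/k(k-1)$, $1/k(k-1)$ and $2/k^2(k-1)$ then emerge from balancing the shift ranges $H_j$, the Vinogradov saving measured in $\lambda_k$, and the overall length $N$ of the outer sum against one another, with each of the three terms in the final bound corresponding to the dominant regime in a different range of $\lambda_k$.

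The main obstacle in a self-contained version would be getting these exponents exactly right: one must choose the number of differencing steps, the integration exponent $2s$, and the shift ranges $H_j$ so that the same three-term bound is attained uniformly in all regimes of $\lambda_k$, while also controlling the accumulation of $N^{\varepsilon}$ losses through the iteration. Since this delicate optimization is carried out explicitly in Heath-Brown's paper, in the present manuscript the proof reduces to quoting that result directly.
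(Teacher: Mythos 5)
The paper proves this lemma exactly as you anticipate: it simply quotes Heath-Brown's Theorem 1 (the reference \cite{Heath-Brown-3}), so your reduction of the proof to a citation coincides with the paper's. One caveat on your supplementary sketch: Heath-Brown's actual argument is not an iterated van der Corput $A$-process (that route only yields the classical exponent of size $1/(2^k-2)$); he instead subdivides $[a,b]$ into short intervals on which $f$ is approximated by its degree-$k$ Taylor polynomial and applies the sharp Vinogradov mean value theorem to the resulting polynomial Weyl sums, which is where the exponents $1/k(k-1)$ arise.
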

\begin{proof}
 See Heath-Brown~\cite{Heath-Brown-3}, Theorem 1.
\end{proof}

\begin{lemma}\label{Heath-Brown-1-lemma-1}
  Let~$\mathcal{I}$~be a subinterval of~$(Y,2Y]$~and let $J$ be a positive integer. Then
   \begin{equation*}
       \bigg|\sum_{n\in\mathcal{I}}z_n\bigg|^2  \leqslant\left(1+\frac{Y}{J}\right)
       \sum_{|j|\leqslant J}\left(1-\frac{|j|}{J}\right)\sum_{n,n+j\in\mathcal{I}}z_{n+j}\overline{z_{n}}.
    \end{equation*}
\end{lemma}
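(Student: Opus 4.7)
The plan is to use the classical Weyl--van der Corput averaging-over-shifts trick. First I would extend $z_n$ to all of $\mathbb{Z}$ by setting $z_n = 0$ for $n \notin \mathcal{I}$, so sums over $\mathcal{I}$ may be regarded as sums over $\mathbb{Z}$ without change. The starting observation is that, for any fixed $k \in \{0, 1, \ldots, J-1\}$, the map $n \mapsto n+k$ is a bijection of $\mathbb{Z}$, so $\sum_{n\in\mathbb{Z}} z_{n+k} = \sum_{n\in\mathcal{I}} z_n$. Summing this identity over $k$ and swapping the order of summation yields
\begin{equation*}
J \sum_{n\in\mathcal{I}} z_n = \sum_{n\in\mathbb{Z}} \sum_{k=0}^{J-1} z_{n+k}.
\end{equation*}

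Next I would apply Cauchy--Schwarz to the right-hand side. The inner sum $\sum_{k=0}^{J-1} z_{n+k}$ vanishes unless some $n+k$ with $0\leqslant k<J$ lies in $\mathcal{I}$, which restricts $n$ to an interval of length at most $Y + J - 1 \leqslant Y + J$. Hence
\begin{equation*}
J^2 \bigg|\sum_{n\in\mathcal{I}} z_n\bigg|^2 \leqslant (Y+J) \sum_{n\in\mathbb{Z}} \bigg|\sum_{k=0}^{J-1} z_{n+k}\bigg|^2.
\end{equation*}
I would then expand the inner square and collect terms by the difference $j = k_1 - k_2$. For each integer $j$ with $|j|<J$, the number of pairs $(k_1,k_2)\in\{0,1,\ldots,J-1\}^2$ with $k_1 - k_2 = j$ is exactly $J - |j|$, so after interchanging the order of summation the right-hand side becomes $(Y+J)\sum_{|j|<J}(J-|j|)\sum_{n\in\mathbb{Z}} z_{n+j}\overline{z_n}$. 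The support condition on the extended $z$ reduces the inner sum to $\sum_{n,\,n+j \in \mathcal{I}} z_{n+j}\overline{z_n}$, matching the statement. Dividing through by $J^2$ and writing $(Y+J)/J = 1 + Y/J$ yields the claimed bound; the $|j| = J$ term vanishes and may harmlessly be included in the outer summation range $|j|\leqslant J$.

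The argument presents no real obstacle --- every step is a bijective change of variables, a single Cauchy--Schwarz, or an elementary count of representations of $j$ as a difference. The only bookkeeping points to watch are the transition from the support of the extended $z$ to the explicit constraint ``$n, n+j \in \mathcal{I}$'' in the final display, and the small slack between $Y + J - 1$ and $Y + J$ in the length of the summation range, which is absorbed cleanly by the factor $1 + Y/J$ appearing in the statement.
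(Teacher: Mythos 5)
Your proof is correct, and it coincides with the argument behind the paper's source: the paper offers no proof of its own here, only the citation to Heath-Brown's Lemma 5, whose proof is exactly your Weyl--van der Corput shift argument (average over the $J$ translates, one Cauchy--Schwarz over the at most $Y+J$ integers $n$ for which the shifted block meets $\mathcal{I}$, then count the $J-|j|$ representations of $j=k_1-k_2$). The bookkeeping points you flag are handled correctly, including the observation that the $|j|=J$ terms carry weight $1-|j|/J=0$, so enlarging the summation range to $|j|\leqslant J$ costs nothing.
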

\begin{proof}
  See Heath-Brown~\cite{Heath-Brown-1}, Lemma 5.
\end{proof}

\begin{lemma}\label{Heath-Brown-1-lemma-2}
  Suppose that $1/2<\alpha<1,\,H\geqslant1,\,N\geqslant1,\,\Delta>0$. Let $S(H,N,\Delta,\gamma)$ \\
  denote the number of solutions of the inequality
  \begin{equation*}
     \big|h_1n_1^\alpha-h_2n_2^\alpha\big|\leqslant\Delta,\qquad h_1,\,h_2\sim H,\,n_1,\,n_2\sim N.
  \end{equation*}
  Then we have
  \begin{equation*}
     S(H,N,\Delta,\gamma)\ll HN\log2HN+\Delta HN^{2-\alpha}.
  \end{equation*}
\end{lemma}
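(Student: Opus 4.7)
The plan is to bound $S(H,N,\Delta,\gamma)$ by a double counting argument which conditions on the pair $(h_1,n_1)$ and uses the mean value theorem to control the remaining freedom in $(h_2,n_2)$.

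First, I would fix $(h_1,n_1)$, set $V:=h_1 n_1^{\alpha}\asymp H N^{\alpha}$, and reduce the problem to estimating
\[
\mathcal{N}(V):=\#\bigl\{(h_2,n_2):\ h_2\sim H,\ n_2\sim N,\ |h_2 n_2^{\alpha}-V|\leqslant\Delta\bigr\}
\]
uniformly in $V$, since $S\leqslant\sum_{h_1\sim H}\sum_{n_1\sim N}\mathcal{N}(h_1 n_1^{\alpha})$ and the outer sums contribute a factor $HN$.

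For each fixed $h_2\sim H$, the function $g(t):=h_2 t^{\alpha}$ satisfies $g'(t)=\alpha h_2 t^{\alpha-1}\asymp h_2 N^{\alpha-1}$ on $[N,2N]$; by the mean value theorem the integers $n_2\sim N$ with $|g(n_2)-V|\leqslant\Delta$ therefore lie in an interval of length $\ll \Delta N^{1-\alpha}/h_2$, so their number is $\ll 1+\Delta N^{1-\alpha}/h_2$. Summing over $h_2\sim H$ and using $\sum_{h_2\sim H} 1/h_2\ll 1$ yields the naive pointwise bound $\mathcal{N}(V)\ll H+\Delta N^{1-\alpha}$, and hence the crude inequality $S\ll H^2 N+\Delta H N^{2-\alpha}$.

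To sharpen the first term from $H^2 N$ to $HN\log(2HN)$ I would refine the estimate on $\mathcal{N}(V)$ by reversing the roles of $h_2$ and $n_2$: for fixed $n_2\sim N$ the admissible $h_2$ lie in an interval of length $\ll\Delta/N^{\alpha}$, giving $\ll 1+\Delta N^{-\alpha}$ values, and hence the alternative bound $\mathcal{N}(V)\ll N+\Delta N^{1-\alpha}$. Combining the two estimates through a dyadic partition of the range $[HN^{\alpha},4HN^{\alpha}]$ into sub-intervals $I_j$ of length $\Delta$ and invoking
\[
\sum_{j}\mathcal{N}(V_j)^{2}\leqslant \Bigl(\max_{j}\mathcal{N}(V_j)\Bigr)\sum_{j}\mathcal{N}(V_j)=\Bigl(\max_{j}\mathcal{N}(V_j)\Bigr)\cdot HN
\]
produces the claimed bound, with the $\log(2HN)$ factor arising from a careful Diophantine treatment of the spurious ``$+1$'' contributions in the sum over $h_2$ (which are genuinely non-trivial only when the short $n_2$-interval does contain an integer of size $\sim N$).

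The main obstacle is precisely this last refinement: absorbing the wasteful $H$ into $\log(2HN)$ is not a consequence of the mean value theorem alone but requires a more delicate counting of the near-coincidences $h_1 n_1^{\alpha}\approx h_2 n_2^{\alpha}$, carried out by sorting the $HN$ values $\{hn^{\alpha}\}$ and exploiting that their multiplicity in any interval of length $\Delta$ is controlled by a divisor-type sum, in the spirit of Heath-Brown~\cite{Heath-Brown-1}.
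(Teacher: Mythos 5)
Your reductions via the mean value theorem are fine as far as they go, but they only yield $S\ll H^2N+\Delta HN^{2-\alpha}$ and $S\ll HN^2+\Delta HN^{2-\alpha}$, and the entire content of the lemma is the replacement of the factor $\min(H,N)$ by $\log(2HN)$. At that point your argument stops being a proof: after partitioning into intervals $I_j$ of length $\Delta$ and writing $S\ll\sum_j\nu_j^2\leqslant(\max_j\nu_j)\sum_j\nu_j$ with $\sum_j\nu_j\asymp HN$, you would need the pointwise bound $\max_j\nu_j\ll\log(2HN)+\Delta N^{1-\alpha}$, i.e.\ that \emph{every} interval of length $\Delta$ contains at most $O(\log 2HN)$ of the $HN$ numbers $hn^{\alpha}$ beyond the measure term. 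Nothing you have written supplies this, and it is not obtainable from the mean value theorem or from a generic ``divisor-type'' remark: the values $hn^{\alpha}$ can cluster (already exact coincidences $h_1n_1^{\alpha}=h_2n_2^{\alpha}$ occur in families of size $\asymp HN$ for suitable rational ratios, and controlling near-coincidences at a single point uniformly in $\alpha$ is essentially the lemma itself, not an input to it). So the final paragraph of your proposal, where the $\log(2HN)$ is said to arise from ``a careful Diophantine treatment'' of the spurious $+1$'s, is precisely the missing proof, not a sketch of one.

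The missing idea is arithmetic, not analytic: one passes to the ratios. The inequality forces $\bigl|h_1/h_2-(n_2/n_1)^{\alpha}\bigr|\ll\Delta/(HN^{\alpha})$, and one classifies solutions according to the reduced fractions $a/b=h_1/h_2$ and $u/v=n_2/n_1$; a given pair of reduced ratios accounts for $\ll(1+H/b)(1+N/v)$ quadruples, while for $b\asymp B$, $v\asymp D$ the points $a/b$ are $\gg B^{-2}$-spaced and the points $(u/v)^{\alpha}$ are $\gg D^{-2}$-spaced (since $x\mapsto x^{\alpha}$ is bi-Lipschitz on $[1/2,2]$). Counting close pairs of these two well-spaced families at each pair of dyadic denominator scales and summing the weights $HN/(bv)$ gives $HN\log(2HN)$ from the near-diagonal scales and $\Delta HN^{2-\alpha}$ from the measure term; this (in substance) is the argument on pp.~256--257 of Heath-Brown to which the paper simply refers, since the paper itself proves nothing here. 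Without some such exploitation of the rational structure of $h_1/h_2$ and $n_1/n_2$, your scheme cannot beat $HN\min(H,N)$ in the first term, so the proposal has a genuine gap at the lemma's crux.
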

\begin{proof}
  See the discussion on pp. 256-257 of Heath-Brown~\cite{Heath-Brown-1}.
\end{proof}

\begin{lemma}\label{Heath-Brown-1-lemma-2-1}
   For any $H\geqslant1$, we have
   \begin{equation}\label{psi(theta)}
     \psi(\theta)=-\sum_{0<|h|\leqslant H}\frac{e(\theta h)}{2\pi ih}+O\big(g(\theta,H)\big),
   \end{equation}
   where
   \begin{equation}\label{psi(theta)-fourier}
     g(\theta,H)=\min\bigg(1,\frac{1}{H\|\theta\|}\bigg)=\sum_{h=-\infty}^{\infty}a_he(\theta h)
   \end{equation}
   and
    \begin{equation}\label{psi(theta)-fourier-coefficient-estimate}
    a_h\ll \min\bigg(\frac{\log2H}{H},\frac{1}{|h|},\frac{H}{|h|^2}\bigg).
   \end{equation}
\end{lemma}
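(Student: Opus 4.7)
The plan is to prove this classical Fourier-analytic approximation to the sawtooth function in two stages: first establishing the truncated expansion \eqref{psi(theta)}, and then analyzing the Fourier series of $g(\theta,H)$ and its coefficient bounds in \eqref{psi(theta)-fourier}--\eqref{psi(theta)-fourier-coefficient-estimate}.

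For \eqref{psi(theta)}, I would start from the pointwise Fourier identity $\psi(\theta)=-\sum_{h\neq 0}\frac{e(h\theta)}{2\pi ih}$, valid for $\theta\notin\mathbb{Z}$. The truncation error equals the tail $-\sum_{|h|>H}\frac{e(h\theta)}{2\pi ih}$, which I would bound by Abel summation together with the elementary estimate $\bigl|\sum_{M_1<|h|\leqslant M_2}e(h\theta)\bigr|\ll\min(M_2,\|\theta\|^{-1})$. This yields a tail of size $O(\min(1,(H\|\theta\|)^{-1}))=O(g(\theta,H))$, giving the desired approximation away from the integers; the trivial bound $|\psi|\leqslant 1/2$ absorbs any leftover region where $\theta$ lies within $O(1/H)$ of an integer, since $g(\theta,H)=1$ there.

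For the Fourier expansion of $g(\theta,H)$, note that $g$ is $1$-periodic, even, continuous (both branches take the value $1$ at $|\theta|=1/H$), bounded by $1$, and piecewise smooth, so it admits a pointwise convergent Fourier series with real coefficients $a_h=\int_0^1 g(\theta,H)e(-h\theta)\,\mathrm{d}\theta$. The first bound $|a_h|\ll\log(2H)/H$ follows from $|a_h|\leqslant\|g\|_{L^1}$, computed by splitting the defining integral at $|\theta|=1/H$. A single integration by parts, with boundary terms cancelling by the continuity and periodicity of $g$, yields $|a_h|\ll|h|^{-1}\|g'\|_{L^1}$, and since $g'(\theta)=-1/(H\theta^2)$ on the smooth piece $(1/H,1/2)$ (with a symmetric expression on $(1/2,1-1/H)$), a direct calculation gives $\|g'\|_{L^1}=O(1)$. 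A second integration by parts, carried out separately on each smooth piece, produces boundary contributions at the break points $\theta=\pm 1/H,\,1/2$ where $g'$ jumps by $O(H)$, together with $\int|g''|\,\mathrm{d}\theta=O(H)$; both contributions have size $O(H/|h|^2)$, yielding the third bound in \eqref{psi(theta)-fourier-coefficient-estimate}.

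The main technical obstacle is the careful bookkeeping in the second integration by parts, since $g'$ has jump discontinuities that must be tracked at each break point. However, every jump is of size $O(H)$ and occurs at a bounded number of points, so all boundary contributions combine cleanly to give exactly the claimed order $H/|h|^2$, and the overall argument follows the standard treatment (e.g.\ Heath-Brown \cite{Heath-Brown-1}).
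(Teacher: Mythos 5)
The paper offers no proof of this lemma at all --- it simply cites p.~245 of Heath-Brown \cite{Heath-Brown-1} --- so there is no paper-internal argument to compare with; your reconstruction (truncate the Fourier expansion of $\psi$, bound the tail by Abel summation against $\big|\sum_{M_1<h\leqslant M_2}e(h\theta)\big|\ll\min(M_2,\|\theta\|^{-1})$, then bound the Fourier coefficients of $g(\theta,H)$ via the $L^1$-norm, one integration by parts, and two integrations by parts respectively) is exactly the standard route behind the cited statement, and the coefficient bounds $\log(2H)/H$, $1/|h|$, $H/|h|^2$ come out as you describe, with the absolute convergence $\sum_h H/|h|^2<\infty$ also justifying the pointwise identity in \eqref{psi(theta)-fourier}.

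One step is stated too quickly: on the region $\|\theta\|\ll 1/H$ the bound $|\psi(\theta)|\leqslant\tfrac12$ alone does not yield \eqref{psi(theta)}, because the error term there is $\psi(\theta)+\sum_{0<|h|\leqslant H}\frac{e(\theta h)}{2\pi ih}$ and you must also control the truncated sum itself. This is easy but should be said: writing $\sum_{0<|h|\leqslant H}\frac{e(\theta h)}{2\pi ih}=-\frac{1}{\pi}\sum_{1\leqslant h\leqslant H}\frac{\sin(2\pi h\theta)}{h}$ and using $|\sin(2\pi h\theta)|\leqslant 2\pi h\|\theta\|$ gives a bound $\ll H\|\theta\|\ll 1=g(\theta,H)$ in that range (and the case $\theta\in\mathbb{Z}$, where the pointwise Fourier identity for $\psi$ fails, is then trivial since the truncated sum vanishes and $|\psi|\leqslant\tfrac12\leqslant g$). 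With that one line added your argument is complete and correct.
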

\begin{proof}
  See pp. 245 of Heath-Brown~\cite{Heath-Brown-1}.
\end{proof}

\begin{lemma}\label{Heath-Brown-identity}
  Let $z\geqslant1$ and $k\geqslant1$. Then, for any $n\leqslant2z^k$,
  \begin{equation*}
     \Lambda(n)=\sum_{j=1}^k(-1)^{j-1}{k \choose j}\mathop{\sum\cdots\sum}_{\substack{n_1n_2\cdots n_{2j}=n\\ n_{j+1},\cdots,n_{2j}\leqslant z }}
     (\log n_1)\mu(n_{j+1})\cdots\mu(n_{2j}).
  \end{equation*}
\end{lemma}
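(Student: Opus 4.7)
The plan is to prove the identity via manipulation of formal Dirichlet series, following the original argument of Heath-Brown. Introduce the truncated series $M(s) := \sum_{m \leq z} \mu(m) m^{-s}$, and recall the generating identities $-\zeta'(s)/\zeta(s) = \sum_{n \geq 1} \Lambda(n) n^{-s}$ together with $-\zeta'(s) = \sum_{n \geq 1} (\log n) n^{-s}$. The first step is to observe that the coefficient of $n^{-s}$ in $\zeta(s) M(s) - 1$ equals $-[n=1] + \sum_{d \mid n,\, d \leq z} \mu(d)$, which vanishes whenever $n \leq z$ thanks to the Möbius inversion identity $\sum_{d \mid n} \mu(d) = [n=1]$. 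Consequently the series $\zeta M - 1$ is supported on integers $n > z$, and raising to the $k$-th power, $(1 - \zeta M)^k = (-1)^k (\zeta M - 1)^k$ is supported on $n > z^k$.

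Next, I would write the trivial decomposition
$$-\frac{\zeta'(s)}{\zeta(s)} \;=\; -\frac{\zeta'(s)}{\zeta(s)}\bigl[1 - (1 - \zeta M)^{k}\bigr] \;+\; \Bigl(-\frac{\zeta'(s)}{\zeta(s)}\Bigr)(1 - \zeta M)^{k}.$$
The second summand, being a Dirichlet convolution of $\Lambda$ with a series supported on $n > z^k$, produces only coefficients at integers strictly exceeding $z^k$; in particular it contributes nothing in the admissible range (the bound $2z^k$ in the statement is the standard formulation, any $n \leq z^k$ already suffices from the support argument). Expanding the first summand via the binomial theorem gives
$$1 - (1 - \zeta M)^{k} \;=\; \sum_{j=1}^{k} \binom{k}{j} (-1)^{j-1} (\zeta M)^{j},$$
and distributing $-\zeta'/\zeta = (-\zeta') \cdot \zeta^{-1}$ yields, for $n$ in the admissible range,
$$\Lambda(n) \;=\; \bigl[\textrm{coefficient of } n^{-s}\bigr] \sum_{j=1}^{k} (-1)^{j-1} \binom{k}{j}\, (-\zeta'(s))\, \zeta(s)^{j-1}\, M(s)^{j}.$$

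Finally, I would read off the $n^{-s}$-coefficient in each summand. The factor $-\zeta'$ contributes a single variable $n_1$ carrying weight $\log n_1$, the factor $\zeta^{j-1}$ contributes $j-1$ unrestricted variables $n_2, \ldots, n_j$ (each weighted by $1$), and the factor $M^j$ contributes $j$ variables $n_{j+1}, \ldots, n_{2j}$, each bounded by $z$ and weighted by $\mu(n_{j+1}) \cdots \mu(n_{2j})$. Under the constraint $n_1 n_2 \cdots n_{2j} = n$, this is precisely the inner sum appearing in the statement of the lemma, which completes the proof.

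The argument requires no quantitative estimates; it is a purely formal identity of Dirichlet series together with a support bound. The only point to monitor carefully is the vanishing of the remainder term $(-\zeta'/\zeta)(1 - \zeta M)^{k}$ on integers $n$ that are not too large, which is where the cutoff $n \leq 2z^{k}$ enters and controls the precise range of validity.
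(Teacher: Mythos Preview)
Your approach is exactly the one in the cited reference (the paper itself does not give a proof but refers to Heath-Brown's original argument), so the strategy is correct and standard.

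There is one genuine slip to fix. You assert that the remainder $\bigl(-\zeta'/\zeta\bigr)(1-\zeta M)^{k}$ ``contributes nothing in the admissible range'', and then parenthetically remark that the support argument only yields vanishing for $n\leqslant z^{k}$, with $2z^{k}$ being merely ``the standard formulation''. As written this leaves the range $z^{k}<n\leqslant 2z^{k}$ unjustified, and the lemma is stated for $n\leqslant 2z^{k}$. The missing observation is that $\Lambda(1)=0$: the $n$-th coefficient of the remainder is $\sum_{ab=n}\Lambda(a)\,c_{b}$ with $c_{b}$ the $b$-th coefficient of $(1-\zeta M)^{k}$, and $c_{b}=0$ whenever $b\leqslant z^{k}$. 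If $n\leqslant 2z^{k}$ and $b\mid n$ with $b>z^{k}$, then $a=n/b<2$, hence $a=1$ and $\Lambda(a)=0$. Thus the remainder vanishes for all $n\leqslant 2z^{k}$, not merely for $n\leqslant z^{k}$. Once you insert this one-line argument, the proof is complete and matches Heath-Brown's.
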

\begin{proof}
  See pp. 1366-1367 of Heath-Brown~\cite{Heath-Brown-2}.
\end{proof}

\begin{lemma}\label{Graham-Kolesnik}
  Suppose that
  \begin{equation*}
       L(H)=\sum_{i=1}^mA_iH^{a_i}+\sum_{j=1}^nB_jH^{-b_j},
  \end{equation*}
  where~$A_i,\,B_j,\,a_i\,\textrm{and}\,\,b_j$~are positive. Assume that~$H_1\leqslant H_2.$~Then there is some~$\mathscr{H}$\\
  with~$H_1\leqslant\mathscr{H}\leqslant H_2$~and
   \begin{equation*}
      L(\mathscr{H}) \ll \sum_{i=1}^{m}A_iH_1^{a_i}+\sum_{j=1}^{n}B_jH_2^{-b_j}
                         +\sum_{i=1}^m\sum_{j=1}^{n}\big(A_i^{b_j}B_j^{a_i}\big)^{1/(a_i+b_j)}.
   \end{equation*}
   The implied constant depends only on~$m$~and~$n.$~
\end{lemma}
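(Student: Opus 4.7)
My plan is to decompose $L(H)=f(H)+g(H)$, where $f(H)=\sum_{i=1}^m A_i H^{a_i}$ is strictly increasing and $g(H)=\sum_{j=1}^n B_j H^{-b_j}$ is strictly decreasing on $(0,\infty)$, and then to locate $\mathscr{H}$ by balancing $f$ against $g$. I would split into three cases according to the sign of $f-g$ at the two endpoints $H_1,H_2$.

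First, if $f(H_1)\geq g(H_1)$, take $\mathscr{H}=H_1$, giving $L(H_1)\leq 2f(H_1)=2\sum_{i=1}^m A_iH_1^{a_i}$, which is absorbed into the first sum of the bound. Symmetrically, if $f(H_2)\leq g(H_2)$, then $\mathscr{H}=H_2$ yields $L(H_2)\leq 2g(H_2)=2\sum_{j=1}^n B_jH_2^{-b_j}$, matching the second sum. In the remaining case $f(H_1)<g(H_1)$ and $f(H_2)>g(H_2)$, continuity of $f-g$ and the intermediate value theorem produce some $\mathscr{H}\in[H_1,H_2]$ with $f(\mathscr{H})=g(\mathscr{H})$, and thus $L(\mathscr{H})=2f(\mathscr{H})$.

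This third case is where the genuine work lies; I would choose an index $j_0$ maximising $B_j\mathscr{H}^{-b_j}$ over $j$, so that $g(\mathscr{H})\leq n B_{j_0}\mathscr{H}^{-b_{j_0}}$. Combined with the balance identity, this gives, for every $i$, the inequality $A_i\mathscr{H}^{a_i}\leq n B_{j_0}\mathscr{H}^{-b_{j_0}}$, whence $\mathscr{H}^{a_i+b_{j_0}}\leq n B_{j_0}/A_i$. Substituting back,
\begin{equation*}
   A_i\mathscr{H}^{a_i} \ll_{m,n} \bigl(A_i^{b_{j_0}} B_{j_0}^{a_i}\bigr)^{1/(a_i+b_{j_0})},
\end{equation*}
and summing over $i$ (then enlarging to the full double sum over $j$) produces the third term of the claimed bound with constant depending only on $m$ and $n$.

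The underlying observation is the pairwise minimisation $\min_{H>0}(AH^a+BH^{-b})\asymp (A^bB^a)^{1/(a+b)}$; the argument above simply transfers this pairwise fact to the full linear combination through the balancing choice of $\mathscr{H}$. The only mild obstacle is bookkeeping the constants in Case~3 (the factor $n^{a_i/(a_i+b_{j_0})}\leq n$ is harmless) and ensuring that the three cases combine into one uniform statement whose implied constant depends only on $m$ and $n$.
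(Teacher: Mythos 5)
Your argument is correct. Note, however, that the paper offers no proof of this lemma at all — it simply cites Lemma 2.4 of Graham and Kolesnik — so what you have produced is a self-contained elementary proof of a quoted result. Your route also differs slightly from the standard textbook one: Graham and Kolesnik choose $\mathscr{H}$ explicitly (essentially as the smallest of the pairwise balancing points $(B_j/A_i)^{1/(a_i+b_j)}$, clipped into $[H_1,H_2]$) and then check cases, whereas you split $L=f+g$ into the increasing and decreasing parts, dispose of the endpoint cases $f(H_1)\geqslant g(H_1)$ and $f(H_2)\leqslant g(H_2)$ directly, and in the remaining case use the intermediate value theorem to find $\mathscr{H}$ with $f(\mathscr{H})=g(\mathscr{H})$, after which the bound $A_i\mathscr{H}^{a_i}\leqslant g(\mathscr{H})\leqslant nB_{j_0}\mathscr{H}^{-b_{j_0}}$ and the substitution $\mathscr{H}^{a_i+b_{j_0}}\leqslant nB_{j_0}/A_i$ give $A_i\mathscr{H}^{a_i}\leqslant n\big(A_i^{b_{j_0}}B_{j_0}^{a_i}\big)^{1/(a_i+b_{j_0})}$, hence the third term with a constant at most $2n$. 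The non-constructive IVT step costs you nothing here since only existence of $\mathscr{H}$ is claimed, while the explicit choice in the textbook version is what one would use if a concrete value of $\mathscr{H}$ (e.g. an integer parameter $Q$, $J$ or $L$ as in the applications of this lemma in Lemmas \ref{type-II} and \ref{type-I}) were needed; your case analysis is exhaustive and the dependence of the constant only on $m$ and $n$ is verified, so the proof stands.
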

\begin{proof}
 See Graham and Kolesnik~\cite{Graham-Kolesnik},~Lemma~2.4.~
\end{proof}

\begin{lemma}\label{Jia-0-Lemma-3-2}
   Suppose that $f(x)\ll B,\,f'(x)\gg \Delta$ for $x\sim N$. Then we have
   \begin{equation*}
    \sum_{n\sim N}\min\bigg(D,\frac{1}{\|f(n)\|}\bigg)\ll(B+1)\bigg(D+\frac{1}{\Delta}\bigg)\log\bigg(2+\frac{1}{\Delta}\bigg).
   \end{equation*}
\end{lemma}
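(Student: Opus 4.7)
The plan is to partition the summation range by the integer part of $f(n)$ and then exploit the $\Delta$-spacing of the fractional parts that follows from the lower bound $|f'(x)| \gg \Delta$. First I would observe that the derivative bound forces $f$ to be strictly monotonic on $[N,2N]$, and together with the hypothesis $f(x) \ll B$ this means $f$ takes values in a set of total length $O(B)$. Hence the integers $m$ that arise as $[f(n)]$ for some $n \sim N$ form a set of cardinality $O(B+1)$, which accounts for the $(B+1)$ factor.

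Fix such an $m$ and set $\mathcal{N}_m = \{n \sim N : [f(n)] = m\}$. For any two $n, n' \in \mathcal{N}_m$, the mean value theorem gives $|f(n) - f(n')| \gg \Delta |n - n'|$ while the left-hand side is at most $1$, forcing $|n - n'| \ll 1/\Delta$. Thus $|\mathcal{N}_m| \ll 1 + 1/\Delta$ and the fractional values $t_n := f(n) - m \in [0,1)$ are pairwise separated by at least a fixed constant multiple of $\Delta$; note also $\|f(n)\| = \min(t_n,\,1 - t_n)$.

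Now split $\mathcal{N}_m$ according to whether $\|f(n)\| < 1/D$ or $\|f(n)\| \geq 1/D$. In the first case the relevant $t_n$ lie in a set of total length $\leq 2/D$, so the spacing bound gives $O(1 + 1/(D\Delta))$ such points, each contributing at most $D$, for a total of $O(D + 1/\Delta)$. In the second case, ordering the points so that the $k$-th smallest satisfies $\|f(n)\| \gg k\Delta$ yields a contribution of
\[
\sum_{k=1}^{O(1/\Delta)} \frac{1}{k\Delta} \;\ll\; \frac{1}{\Delta}\log\!\Bigl(2 + \tfrac{1}{\Delta}\Bigr).
\]
Combining the two cases, each $m$ contributes $O\bigl((D + 1/\Delta)\log(2 + 1/\Delta)\bigr)$, and summing over the $O(B+1)$ admissible values of $m$ gives the claimed estimate.

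There is no genuine obstacle here; the argument is a routine Dirichlet-type spacing estimate. The only care required is to handle uniformly the degenerate regimes $B \ll 1$, $1/\Delta \ll 1$, and $D\Delta \geq 1$ (in which the ``small'' subset is a single point or empty), all of which are absorbed into the $(B+1)$, $(D + 1/\Delta)$, and $\log(2 + 1/\Delta)$ factors respectively.
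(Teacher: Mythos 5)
Your argument is correct, and it is worth noting that the paper itself offers no proof of this lemma: it simply cites Lemma~3 of Jia's paper on square-free numbers, so your write-up supplies a self-contained argument where the paper defers to the literature. What you give is the standard spacing estimate that underlies Jia's lemma: the bound $f\ll B$ confines $[f(n)]$ to $O(B+1)$ integers, and within each class $\mathcal{N}_m$ the mean value theorem together with $|f'|\gg\Delta$ makes the fractional parts $c\Delta$-spaced (note this spacing needs no monotonicity, only the pointwise lower bound on $|f'|$, so your opening monotonicity remark is dispensable). The only imprecision is the claim that the $k$-th smallest value of $\|f(n)\|$ is $\gg k\Delta$: on each side (fractional parts near $0$ and near $1$) the spacing only guarantees this for the second point onward, since the nearest point may lie arbitrarily close to the endpoint. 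This is harmless, because in your second case that nearest point has $\|f(n)\|\geqslant 1/D$ and so contributes at most $D$, which is absorbed into the $D+1/\Delta$ term; with that caveat each class contributes $\ll D+\Delta^{-1}\log(2+1/\Delta)$ and summing over the $O(B+1)$ classes gives the stated bound.
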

\begin{proof}
 See Jia~\cite{Jia-0},~Lemma~3.~
\end{proof}

\begin{lemma}\label{Jia-0-Lemma-3-1}
   Suppose $f(x)$ and $g(x)$ are algebraic function in $[a,b]$ and
    \begin{equation*}
     \frac{1}{R}\leqslant\big|f''(x)\big|\ll\frac{1}{R},\qquad \big|f'''(x)\big|\ll\frac{1}{RU}\,\,\,(U\geqslant1),
   \end{equation*}
    \begin{equation*}
     \big|g(x)\big|\leqslant G,\qquad \big|g'(x)\big|\ll U^{-1}G.
   \end{equation*}
   Let $[\alpha,\beta]$ be the image of $[a,b]$ under the mapping $y=f'(x)$. Then we have
   \begin{eqnarray*}
    \sum_{a<n\leqslant b}g(n)e\big(f(n)\big) & = & \sum_{\alpha<u\leqslant\beta}\frac{g(n_u)}{\sqrt{f''(n_u)}}
                                                   e\left(f(n_u)-un_u+\frac{1}{8}\right)  \\
         &  & +O\big( G\log(\beta-\alpha+2)+U^{-1}G(b-a+R)\big) \\
         &  & +O\left(G\min\bigg(\sqrt{R},\frac{1}{\|\alpha\|}\bigg)+G\min\bigg(\sqrt{R},\frac{1}{\|\beta\|}\bigg)\right),
   \end{eqnarray*}
   where $n_u$ is the solution of $f'(n)=u.$
\end{lemma}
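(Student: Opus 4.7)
The statement is a standard instance of van der Corput's $B$-process: truncated Poisson summation followed by the method of stationary phase. My plan is to convert the sum on the left into a dual sum over frequencies $u$ ranging in the image $[\alpha,\beta]$ of $f'$, and then evaluate each resulting integral at its unique saddle point. The hypotheses $|f''|\asymp 1/R$ and $|f'''|\ll 1/(RU)$ guarantee that the saddle $n_u$ exists, is unique, and varies smoothly with $u$.

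To start, I would apply Poisson summation to $h(x)=g(x)e(f(x))\mathbf{1}_{[a,b]}(x)$, either directly or by substituting the Fourier expansion of $\psi$ supplied by Lemma \ref{Heath-Brown-1-lemma-2-1} into an Abel-summation identity. This gives
\[
   \sum_{a<n\leq b} g(n)e(f(n)) \;=\; \sum_{u\in\mathbb{Z}}\int_a^b g(x)e\bigl(f(x)-ux\bigr)\,dx \;+\; \text{boundary corrections}.
\]
Since $f''$ has constant sign, $x\mapsto f'(x)$ is a bijection from $[a,b]$ onto $[\alpha,\beta]$, and for $u$ outside $[\alpha,\beta]$ one has $|f'(x)-u|\gg \mathrm{dist}(u,[\alpha,\beta])$. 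Repeated integration by parts then shows that these non-stationary $u$'s contribute only $O(G\log(\beta-\alpha+2))$ in total, which is the first error term of the lemma.

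For each $u\in[\alpha,\beta]$ I would Taylor expand around $n_u$:
\[
   f(x)-ux \;=\; f(n_u)-un_u + \tfrac{1}{2}f''(n_u)(x-n_u)^2 + O\!\left(\frac{(x-n_u)^3}{RU}\right),
\]
together with $g(x)=g(n_u)+O(U^{-1}G(x-n_u))$. Evaluating the resulting Gaussian integral (and using $\mathrm{sign}\,f''$ to pin down the phase $1/8$) yields the main term $g(n_u)f''(n_u)^{-1/2}e(f(n_u)-un_u+1/8)$. The error from the cubic phase correction together with the variation of $g$ is of size $O(U^{-1}G\sqrt{R})$ per saddle; summing over the $\asymp (b-a)/R + 1$ admissible values of $u$ gives $O(U^{-1}G(b-a+R))$, the second error term.

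Finally, the boundary contributions $G\min(\sqrt{R},\|\alpha\|^{-1})$ and $G\min(\sqrt{R},\|\beta\|^{-1})$ arise from the ends of the $u$-range: when $\alpha$ or $\beta$ lies close to an integer a saddle point sits near $a$ or $b$ and the Gaussian tail is truncated, so the standard stationary-phase bound is replaced by the Kusmin--Landau-type estimate $\min(\sqrt{R},\|\alpha\|^{-1})$, and similarly at $\beta$. The main obstacle, as usual with $B$-process proofs, is the bookkeeping of error terms: one must verify the Gaussian approximation uniformly in $u$, control the cubic remainder via the $f'''$ hypothesis, and separate the interior saddle contributions from the delicate boundary terms without losing logarithmic factors. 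This is exactly the analysis carried out in \cite{Jia-0}, Lemma 3.
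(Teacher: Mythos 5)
The paper gives no proof of this lemma at all --- it simply cites Jia \cite{Jia-0}, Lemma 5 --- so there is no internal argument to compare against. Your outline is the standard van der Corput $B$-process (truncated Poisson summation followed by stationary phase) by which that classical saddle-point lemma is proved, so the approach is the right one and essentially the same as in the cited source; note only that it remains a plan, since the uniform control of the Gaussian and cubic-remainder errors, the bookkeeping that turns the per-saddle error into $O\big(U^{-1}G(b-a+R)\big)$, and the endpoint terms $G\min\big(\sqrt{R},\|\alpha\|^{-1}\big)$, $G\min\big(\sqrt{R},\|\beta\|^{-1}\big)$ are exactly the technical content deferred to \cite{Jia-0}.
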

\begin{proof}
 See Jia~\cite{Jia-0},~Lemma~5.~
\end{proof}

 For the sum of the form
\begin{equation*}
  \min\bigg(1,\frac{H_1}{H}\bigg)\sum_{h\sim H}\Bigg|\sum_{m\sim M}\sum_{n\sim N}a_mb_ne\big(\alpha m^3n^3+h(mn+u)^\gamma\big)\Bigg|
\end{equation*}
with
\begin{equation*}
  MN\sim x,\,\,\,a_m\ll x^{\varepsilon},\,\,b_n\ll x^{\varepsilon}
\end{equation*}
for every fixed~$\varepsilon,$~it is usually called a ``Type I" sum, denoted by~$S_I(M,N),$~if~$b_n=1$~or~$b_n=\log n;$~otherwise it is called a ``Type II" sum, denoted by~$S_{II}(M,N).$~

\begin{lemma}\label{type-II}
   Suppose that $48(1-\gamma)+48\delta<1,\,|a_m|\ll1,\,|b_n|\ll1,\,MN\asymp x$. Then, for
   \begin{equation}\label{Type-II-M-condition}
      x^{24(1-\gamma)+24\delta+\varepsilon}\ll M\ll x^{\gamma-2\delta-\varepsilon},
   \end{equation}
   we have
   \begin{equation}\label{Type-II-estimation}
     S_{II}(M,N)\ll x^{1-\delta-\varepsilon}.
   \end{equation}
\end{lemma}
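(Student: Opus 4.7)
The proof follows the standard Cauchy--Schwarz plus Weyl differencing template for Type II sums in Piatetski-Shapiro problems. The goal is to convert the bilinear sum in $(m,n)$ into a one-variable exponential sum where van der Corput's method produces a power saving.

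First, I apply Cauchy--Schwarz twice: once on the $h$-sum, and once on the inner $m$-sum to discard the coefficients $|a_m|^2 \ll 1$. This yields
\begin{equation*}
 S_{II}(M,N)^2 \ll \min(1, H_1/H)^2\, HM \sum_{h \sim H}\sum_{m \sim M}\Bigl|\sum_{n \sim N}b_n\, e\bigl(\alpha m^3 n^3 + h(mn+u)^\gamma\bigr)\Bigr|^2.
\end{equation*}
Next, I apply Lemma \ref{Heath-Brown-1-lemma-1} to the inner $n$-sum with a parameter $J$, $1 \leq J \leq N$, to be chosen. The diagonal ($j = 0$) term contributes $O(x^2 H / J)$ after summation over $h, m$, while the off-diagonal shifts reduce to estimating, for each fixed $j$ with $1 \leq |j| \leq J$, the triple exponential sum
\begin{equation*}
 \sum_{h \sim H}\sum_{m \sim M}\sum_{n \sim N} b_{n+j}\overline{b_n}\, e\bigl(\Phi_j(h,m,n)\bigr),
\end{equation*}
where $\Phi_j(h,m,n) = \alpha m^3\bigl((n+j)^3 - n^3\bigr) + h\bigl((m(n+j)+u)^\gamma - (mn+u)^\gamma\bigr)$.

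For the triple sum, there are two natural routes. Either evaluate the $h$-sum geometrically, since $\Phi_j$ is linear in $h$, obtaining a $\min(H, \|\cdot\|^{-1})$ bound and then invoking Lemma \ref{Jia-0-Lemma-3-2} together with the derivative estimate for the difference $(m(n+j)+u)^\gamma - (mn+u)^\gamma \asymp j m^\gamma n^{\gamma-1}$; or fix $h, m, j$ and apply Lemma \ref{Heath-Brown-3-Theorem-1} (with $k = 3$) to $\sum_n e(\Phi_j)$, using that the dominant third derivative is $\asymp \alpha m^3 j$. In either case one then balances the diagonal $x^2 H / J$ against the off-diagonal contribution by an optimal choice of $J$, and the interval (\ref{Type-II-M-condition}) for $M$ emerges as precisely the range in which this balancing yields $S_{II}(M,N)^2 \ll x^{2 - 2\delta - 2\varepsilon}$.

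\textbf{Main obstacle.} The delicate step is the exponential sum estimate for $\Phi_j$. One must check that throughout the $M$-range in (\ref{Type-II-M-condition}) the dominant contribution to the relevant derivative in $n$ comes from the cubic term $\alpha m^3 n^3$, so that no cancellation against the fractional-power piece $h(mn+u)^\gamma$ occurs, and that the van der Corput bound applies uniformly in $h$ and $m$. Tracking the exponents through the optimization over $J$ and verifying that the resulting bound is $\ll x^{1-\delta-\varepsilon}$ exactly under the hypothesis $48(1-\gamma) + 48\delta < 1$ constitutes the technical core of the proof; the specific numerical constants $24$ and $48$ are the sharp output of this balancing procedure.
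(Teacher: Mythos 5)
Your skeleton (Cauchy in $h$, Cauchy in $m$, then Weyl differencing in $n$ via Lemma \ref{Heath-Brown-1-lemma-1}) is not the paper's argument, and as proposed it has genuine gaps. The most serious one is uniformity in $\alpha$: the lemma must hold for \emph{every} $\alpha$ (it is used to prove Proposition \ref{proposition-T3-S3} uniformly in $\alpha$), so no step may rely on the cubic phase $\alpha m^3n^3$ producing oscillation. Your second route does exactly that: you invoke Lemma \ref{Heath-Brown-3-Theorem-1} for $\sum_n e(\Phi_j)$ ``using that the dominant third derivative is $\asymp \alpha m^3 j$''. This fails twice over: $(n+j)^3-n^3$ is quadratic in $n$, so the cubic piece contributes nothing to the third $n$-derivative at all, and any lower bound proportional to $\alpha$ collapses when $\alpha$ is small or rational with small denominator. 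Moreover this is a Type II sum, so the weights $b_{n+j}\overline{b_n}$ are general bounded coefficients still attached to the $n$-sum; you cannot apply a van der Corput/$k$-th derivative estimate to that sum at all. Your first route (evaluate the $h$-sum geometrically, then Lemma \ref{Jia-0-Lemma-3-2}) is legitimate in structure but quantitatively hopeless: after discarding $e(\alpha m^3(\cdot))$ and the $b$'s one is left with $\sum_{m,n}\min\big(H,\|(m(n+j)+u)^\gamma-(mn+u)^\gamma\|^{-1}\big)$, which is generically of size $x\log x$ (in Lemma \ref{Jia-0-Lemma-3-2} the cross term is $B\Delta^{-1}\asymp N$ per $m$), so the off-diagonal contribution to $S_{II}^2$ is $\gg \min(H,H_1)x^2$, i.e. $\gg x^{3-\gamma}$ for $H\asymp H_1=x^{1-\gamma}$, far above the target $x^{2-2\delta}$. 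In addition, having squared in both $h$ and $m$, the diagonal of the $n$-differencing is $\asymp(\min(H,H_1)x)^2/J$ with $J\leqslant N=x/M$, which already forces $M\ll x^{2\gamma-1-2\delta}$ and so cannot reach the top of the claimed range $M\ll x^{\gamma-2\delta}$.

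The missing idea, which is the heart of the paper's proof, is to arrange the bilinear squaring so that the unknown phase $\alpha m^3n^3$ is eliminated identically rather than estimated. The paper keeps $h$ glued to $n$: the pairs $(n,h)$ are partitioned into $Q$ classes $w_q$ on which $hn^\gamma$ is nearly constant, Cauchy is applied only in $m$, and one is led to $\sum_m e(f(m))$ with $f(m)=\alpha m^3(n_1^3-n_2^3)+h_1(mn_1+u)^\gamma-h_2(mn_2+u)^\gamma$. Since the $\alpha$-part is a cubic polynomial in $m$, $f^{(4)}(m)$ depends only on $\lambda=h_1n_1^\gamma-h_2n_2^\gamma$ (so the bound is automatically uniform in $\alpha$), Lemma \ref{Heath-Brown-3-Theorem-1} with $k=4$ applies, the near-diagonal pairs with $|\lambda|$ small are counted by the spacing Lemma \ref{Heath-Brown-1-lemma-2}, and $Q$ is then optimized by Lemma \ref{Graham-Kolesnik}; the constants $24$, $48$ and the window (\ref{Type-II-M-condition}) come from the explicit terms in (\ref{SII-upper}) (e.g. $M^{-1/24}H_1x$ and $M^{1/2}H_1^{1/2}x^{1/2}$ with $H_1=x^{1-\gamma}$), not from the balancing you describe. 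Without some device of this kind (double differencing or a spacing argument that kills the polynomial phase), your plan cannot deliver (\ref{Type-II-estimation}) in the stated range.
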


\begin{proof}
  Let $Q$ be a positive integer satisfying $1\leqslant Q\leqslant HN\log^{-1}x$. For each $q\,\,(1\leqslant q\leqslant Q)$, define
  \begin{equation*}
     w_q:=\big\{ (n,h):4HN^\gamma(q-1)Q^{-1}<hn^{\gamma}\leqslant 4HN^\gamma q Q^{-1},h\sim H,n\sim N   \big\}.
  \end{equation*}
Then we have
\begin{eqnarray*}
   S & := & \sum_{h\sim H}\Bigg|\sum_{m\sim M}\sum_{n\sim N}a_mb_ne\big(\alpha m^3n^3+h(mn+u)^\gamma\big)\Bigg| \\
   & = &  \sum_{h\sim H}\sum_{m\sim M}\sum_{n\sim N}a_mb_nc_he\big(\alpha m^3n^3+h(mn+u)^\gamma\big)  \\
   & = &  \sum_{m\sim M}a_m \sum_{h\sim H}\sum_{n\sim N}b_nc_he\big(\alpha m^3n^3+h(mn+u)^\gamma\big)  \\
   & = &  \sum_{m\sim M}a_m \sum_{q=1}^Q \sum_{(n,q)\in w_q} b_nc_he\big(\alpha m^3n^3+h(mn+u)^\gamma\big),
\end{eqnarray*}
  where $|c_h|=1,\,h\sim H$. By Cauchy's inequality, we obtain
\begin{eqnarray}\label{|s|^2}
   |S|^2 & \ll & \bigg(\sum_{m\sim M}|a_m|^2\bigg)
                 \left(\sum_{m\sim M}\Bigg|\sum_{q=1}^Q \sum_{(n,q)\in w_q} b_nc_he\big(\alpha m^3n^3+h(mn+u)^\gamma\big)\Bigg|^2 \right)
                      \nonumber   \\
   & \ll & MQ\sum_{m\sim M}\sum_{q=1}^Q\Bigg| \sum_{(n,q)\in w_q} b_nc_he\big(\alpha m^3n^3+h(mn+u)^\gamma\big)\Bigg|^2
                      \nonumber   \\
   & \ll & MQ\sum_{q=1}^Q\sum_{\substack{(n_1,h_1)\in w_q \\(n_2,h_2)\in w_q}}\Bigg|\sum_{m\sim M}e\big(\alpha m^3
               (n_1^3-n_2^3)+h_1(mn_1+u)^\gamma-h_2(mn_2+u)^\gamma\big)\Bigg|
                       \nonumber   \\
   & =: & MQ\sum_{*}\Bigg|\sum_{m\sim M}e\big(f(m)\big)\Bigg|,
\end{eqnarray}
where $f(m)=\alpha m^3(n_1^3-n_2^3)+h_1(mn_1+u)^\gamma-h_2(mn_2+u)^\gamma$. The outer sum runs over all the
quadruples $(h_1,n_1,h_2,n_2)$ with $(h_1,n_1),(h_2,n_2)\in w_q.$

    Let $\lambda=h_1n_1^\gamma-h_2n_2^\gamma.$~Then we have $|\lambda|\leqslant 4HN^{\gamma}Q^{-1}$. It is easy to verify that
\begin{eqnarray*}
   f^{(4)}(m) & = & \gamma(\gamma-1)(\gamma-2)(\gamma-3)\Big(h_1n_1^4(mn_1+u)^{\gamma-4}-h_2n_2^4(mn_2+u)^{\gamma-4}\Big) \\
     & = & \gamma(\gamma-1)(\gamma-2)(\gamma-3)\bigg(\lambda m^{\gamma-4}+O\bigg(\frac{H}{H_1M^4}\bigg)\bigg).
\end{eqnarray*}
Thus, there exists a constant $C(\lambda)>0$ such that $f^{(4)}(m)\asymp |\lambda|M^{\gamma-4}$ for $|\lambda|\geqslant C(\lambda)M^{-\gamma}HH_1^{-1}$.
By Lemma \ref{Heath-Brown-3-Theorem-1} with $k=4$, the estimate of the inner sum in (\ref{|s|^2}) is
\begin{equation}\label{inner-sum-estimate-1}
  \sum_{m\sim M}e\big(f(m)\big)\ll M^{2/3+\gamma/12+\varepsilon}|\lambda|^{1/12}+M^{11/12+\varepsilon}+M^{1-\gamma/24+\varepsilon}|\lambda|^{-1/24}.
\end{equation}
If $|\lambda|<C(\lambda)M^{-\gamma}\,(H\leqslant H_1)$ or $|\lambda|<C(\lambda)M^{-\gamma}HH_1^{-1}\,(H>H_1)$, we use the trivial bound $M$ to estimate the
inner sum.

  By Lemma~\ref{Heath-Brown-1-lemma-2}, the contributions of $M$ to $|S|^2$ are (with $H\leqslant H_1$)
\begin{equation}\label{17}
  \ll M^2Q\big(HN\log2HN+M^{-\gamma}HN^{2-\gamma}\big)\ll M^2QHN\log2HN
\end{equation}
and (with $H> H_1$)
\begin{equation}\label{18}
  \ll M^2Q\big(HN\log2HN+HH_1^{-1}M^{-\gamma}HN^{2-\gamma}\big)\ll M^2QHN\log2HN.
\end{equation}
By noting that $|\lambda|\ll HN^{\gamma}Q^{-1}$, then the contribution of $M^{11/12+\varepsilon}$ to $|S|^2$ is
\begin{eqnarray}\label{18-5}
   & \ll &  MQ\cdot M^{11/12+\varepsilon}\cdot S(H,N,4HN^{\gamma}Q^{-1},\gamma)
                        \nonumber   \\
   & \ll & M^{23/12+\varepsilon}Q\cdot \big(HN\log2HN+HN^{\gamma}Q^{-1}\cdot HN^{2-\gamma}\big)
                         \nonumber   \\
   & \ll & M^{23/12+\varepsilon}H^2N^2
                         \nonumber   \\
   & \ll & M^{-1/12+\varepsilon}H^2x^2.
\end{eqnarray}
Similarly, the contribution of $M^{2/3+\gamma/12+\varepsilon}|\lambda|^{1/12}$ to $|S|^2$ is
\begin{eqnarray}\label{19}
   & \ll &  MQ\cdot M^{2/3+\gamma/12+\varepsilon}\cdot H^{1/12}N^{\gamma/12}Q^{-1/12} \cdot S(H,N,4HN^{\gamma}Q^{-1},\gamma)
                        \nonumber   \\
   & \ll & M^{5/3+\gamma/12+\varepsilon}Q^{11/12}H^{1/12}N^{\gamma/12} \big(HN\log2HN+HN^{\gamma}Q^{-1}\cdot HN^{2-\gamma}\big)
                         \nonumber   \\
   & \ll & M^{-1/3+\varepsilon}Q^{-1/12}H^{25/12}x^{2+\gamma/12}.
\end{eqnarray}
By a splitting argument and Lemma \ref{Heath-Brown-1-lemma-2}, the contributions of $M^{1-\gamma/24+\varepsilon}|\lambda|^{-1/24}$ to $|S|^2$ are
(with $H\leqslant H_1$ and $|\lambda|>C(\lambda)M^{-\gamma}$)
\begin{eqnarray}\label{20}
   & \ll & M^{2-\gamma/24+\varepsilon}Q(\log x)\times \max_{M^{-\gamma}\ll U\ll HN^{\gamma}Q^{-1}}
           \sum_{U<|\lambda|\leqslant2U} |\lambda|^{-1/24}
                          \nonumber   \\
  &  \ll &  M^{2-\gamma/24+\varepsilon}Q(\log x)\times \max_{M^{-\gamma}\ll U\ll HN^{\gamma}Q^{-1}}
            U^{-1/24}\cdot S(H,N,U,\gamma)
                          \nonumber   \\
  & \ll &   M^{2-\gamma/24+\varepsilon}Q(\log x)\times \max_{M^{-\gamma}\ll U\ll HN^{\gamma}Q^{-1}}
            \big(U^{-1/24}HN\log2HN+U^{23/24}HN^{2-\gamma}\big)
                          \nonumber   \\
  & \ll & M^{2-\gamma/24+\varepsilon}Q(\log x) \big(M^{\gamma/24}HN\log2HN+H^{23/24}N^{23\gamma/24}Q^{-23/24}\cdot HN^{2-\gamma}\big)
                          \nonumber   \\
  & \ll & M^{2+\varepsilon}QHN\log^2x+Q^{1/24}H^{47/24}M^{\varepsilon}x^{2-\gamma/24}\log x
\end{eqnarray}
and (with $H> H_1$ and $|\lambda|>C(\lambda)M^{-\gamma}HH_1^{-1}$)
\begin{eqnarray}\label{21}
   & \ll & M^{2-\gamma/24+\varepsilon}Q(\log x)\times \max_{M^{-\gamma}HH_1^{-1}\ll U\ll HN^{\gamma}Q^{-1}}
           \sum_{U<|\lambda|\leqslant2U} |\lambda|^{-1/24}
                          \nonumber   \\
  &  \ll &  M^{2-\gamma/24+\varepsilon}Q(\log x)\times \max_{M^{-\gamma}HH_1^{-1}\ll U\ll HN^{\gamma}Q^{-1}}
            \!\!\!\!\!\!\!\! U^{-1/24}\cdot S(H,N,U,\gamma)
                          \nonumber   \\
  & \ll &   M^{2-\gamma/24+\varepsilon}Q(\log x)\times \max_{M^{-\gamma}HH_1^{-1}\ll U\ll HN^{\gamma}Q^{-1}}
            \!\!\!\!\!\!\!\!\!\!  \big(U^{-1/24}HN\log2HN+U^{23/24}HN^{2-\gamma}\big)
                          \nonumber   \\
  & \ll & M^{2-\gamma/24+\varepsilon}Q(\log x) \big(M^{\gamma/24}(H_1H^{-1})^{1/24}HN\log2HN         \nonumber   \\
   &  &  \qquad   \qquad  \qquad  \qquad  \qquad  \qquad   +H^{23/24}N^{23\gamma/24}Q^{-23/24}\cdot HN^{2-\gamma}\big)
                          \nonumber   \\
  & \ll & M^{2-\gamma/24+\varepsilon}Q(\log x) \big(M^{\gamma/24}HN\log2HN+H^{23/24}N^{23\gamma/24}Q^{-23/24}\cdot HN^{2-\gamma}\big)
                          \nonumber   \\
  & \ll & M^{2+\varepsilon}QHN\log^2x+Q^{1/24}H^{47/24}M^{\varepsilon}x^{2-\gamma/24}\log x.
\end{eqnarray}
From (\ref{17})-(\ref{21}), we can get
\begin{eqnarray}
  (\log x)^{-2}|S|^2  \!\! & \ll & \!\! M^{-1/12+\varepsilon}H^2x^2+M^{2+\varepsilon}HNQ+M^{\varepsilon}H^{47/24}x^{2-\gamma/24}Q^{1/24}
                                          \nonumber   \\
  \!\! &    & \!\! + M^{-1/3+\varepsilon}H^{25/12}x^{2+\gamma/12}Q^{-1/12}.
\end{eqnarray}
By Lemma~\ref{Graham-Kolesnik}, we can choose an optimal $Q\in[1,HN\log^{-1}x]$ such that
\begin{eqnarray}
  (\log x)^{-3}|S|^2  \!\! & \ll & \!\! M^{-1/12+\varepsilon}H^2x^2+M^{2+\varepsilon}HN+M^{\varepsilon}H^{47/24}x^{2-\gamma/24}+M^{-1/9+\varepsilon}H^2x^2
                                          \nonumber   \\
  \!\! &    & \!\! + M^{-1/4+\varepsilon}H^2x^{(23+\gamma)/12}+M^{-3/13+\varepsilon}H^2x^{(25+\gamma)/13}.
\end{eqnarray}
Therefore, we have
\begin{eqnarray}\label{SII-upper}
  (\log x)^{-2}S_{II}(M,N) & \ll & M^{-1/24+\varepsilon}H_1x+M^{1/2+\varepsilon}H_1^{1/2}x^{1/2}
                                     \nonumber   \\
  & & +M^{-1/8+\varepsilon}H_1x^{(23+\gamma)/24}+M^{-3/26+\varepsilon}H_1x^{(25+\gamma)/26}
                                       \nonumber   \\
   &   &                        +M^{\varepsilon}H_1^{47/48}x^{1-\gamma/48}  +M^{-1/18+\varepsilon}H_1x
                                       \nonumber   \\
\end{eqnarray}

From (\ref{SII-upper}) we know that, under the condition (\ref{Type-II-M-condition}), the result of Lemma~\ref{type-II} follows.
\end{proof}

\begin{lemma}\label{type-I}
   Suppose that $16(1-\gamma)+16\delta<1,\,\,|a_m|\ll1,\,b_n=1\,\textrm{or}\,\,\,b_n=\log n,\,\,\,MN\asymp x$.
   Let
   \begin{equation}\label{type-1-condition}
     \begin{array}{ll}
        a_1=\displaystyle\frac{3}{2}-19(1-\gamma)-19\delta,
                &   \qquad a_2=\displaystyle\frac{12}{11}-\displaystyle\frac{144}{11}(1-\gamma)-\displaystyle\frac{144}{11}\delta,  \\
        a_3=1-\displaystyle\frac{35}{3}(1-\gamma)-\displaystyle\frac{35}{3}\delta,
                &   \qquad a_4=\displaystyle\frac{18}{17}-\displaystyle\frac{192}{17}(1-\gamma)-\displaystyle\frac{192}{17}\delta,  \\
        a_5=\displaystyle\frac{13}{11}-\displaystyle\frac{118}{11}(1-\gamma)-\displaystyle\frac{118}{11}\delta,
                &   \qquad a_6=\displaystyle\frac{24}{23}-\displaystyle\frac{216}{23}(1-\gamma)-\displaystyle\frac{216}{23}\delta,  \\
        a_7=\displaystyle\frac{26}{29}-\displaystyle\frac{194}{29}(1-\gamma)-\displaystyle\frac{201}{29}\delta,
                &   \qquad a_8=\displaystyle\frac{24}{29}-\displaystyle\frac{180}{29}(1-\gamma)-\displaystyle\frac{186}{29}\delta,  \\
        a_9=\displaystyle\frac{46}{57}-\displaystyle\frac{346}{57}(1-\gamma)-\displaystyle\frac{357}{57}\delta,
                &   \qquad a=\min\big(a_1,a_2,\cdots,a_9\big)-\varepsilon.
     \end{array}
   \end{equation}
   If there holds
   \begin{equation}\label{Type-I-M-condition}
       M\ll x^a,
   \end{equation}
   then we have
   \begin{equation}\label{Type-I-estimation}
     S_{I}(M,N)\ll x^{1-\delta-\varepsilon}.
   \end{equation}
\end{lemma}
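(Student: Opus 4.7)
The plan is to fix $m$ and $h$ in the outer summations and estimate the inner exponential sum
\[
T_{m,h}(N) = \sum_{n \sim N} b_n\, e\bigl(\alpha m^3 n^3 + h(mn+u)^\gamma\bigr)
\]
pointwise, then sum trivially over $h \sim H$ and $m \sim M$ using $|a_m| \ll 1$ (the case $b_n = \log n$ is reduced to $b_n = 1$ by partial summation). Write $f(n) = \alpha m^3 n^3 + h(mn+u)^\gamma$. The $k$-th derivative of $f$ has size roughly $|\alpha|\,m^3 N^{3-k}$ for $1 \le k \le 3$ plus $h\,m^\gamma N^{\gamma-k}$ for all $k \ge 1$. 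The presence of two distinct oscillating pieces, the cubic one from the Waring phase and the fractional one from the Piatetski–Shapiro phase, is what gives the Type I estimate its flexibility: depending on the relative sizes of $|\alpha|\,m^3 N^3$ and $h(mN)^\gamma$, a different order derivative or a different exponent pair is optimal, and each choice will produce one of the nine bounds $a_1,\ldots,a_9$.

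For the first few bounds ($a_1$, $a_2$, $a_3$) I would apply the direct derivative tests: Lemma \ref{Jia-0-Lemma-1} at the level of the second derivative, and Lemma \ref{Heath-Brown-3-Theorem-1} at $k=3,4$ on $T_{m,h}(N)$. On the minor arcs, Lemma \ref{Dirichlet} supplies an approximation $\alpha = a/q + \lambda$ with controlled $q$ and $\lambda$, which fixes the size of $\alpha m^3$ and keeps the cubic piece of $f^{(k)}$ in the admissible regime for the $k$-th derivative test. For the finer bounds $a_4,\ldots,a_9$ I would first apply Lemma \ref{Jia-0-Lemma-3-1} (truncated stationary phase / Poisson transform) once, and then iterate, replacing $T_{m,h}(N)$ by a dual sum in a new variable $r$ whose phase again has derivatives accessible to Lemmas \ref{Jia-0-Lemma-1} and \ref{Heath-Brown-3-Theorem-1}. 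Each choice of derivative order on the transformed sum produces a new pointwise bound, and summing trivially over $h$ and $m$ together with $MN \asymp x$ converts each pointwise bound into an estimate of the form $S_I(M,N) \ll x^{1-\delta-\varepsilon}$ under the constraint $M \ll x^{a_i - \varepsilon}$. Taking the worst of the nine constraints yields (\ref{Type-I-M-condition}).

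The hardest step will be the Poisson/stationary-phase transformation: each application of Lemma \ref{Jia-0-Lemma-3-1} produces error terms (the $U^{-1}G(b-a+R)$ term and the two $\min\bigl(\sqrt R,\,1/\|\alpha\|\bigr)$ terms) that must be absorbed into the target $x^{1-\delta-\varepsilon}$, and the stationary point $n_u$ defined by $f'(n_u)=u$ depends on both $m$ and $h$, so the position and density of the dual variable must be tracked uniformly. A secondary difficulty is the parameter optimisation: after each transformation the length of the dual sum and the sizes of its derivatives shift with the split between the cubic and fractional pieces of $f$, so $M$, $N$, $H$, and the cutoff $H_1$ from the weight $\min(1,H_1/H)$ need to be balanced differently in each of the nine cases. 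The hypothesis $16(1-\gamma)+16\delta<1$ is precisely the condition that makes all nine balancings simultaneously admissible, so the final step is to check case by case that the condition $M \ll x^{a_i}$, with $a_i$ as in (\ref{type-1-condition}), makes the corresponding estimate tight enough to yield the saving $x^{-\delta-\varepsilon}$.
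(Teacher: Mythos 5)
Your plan estimates the inner $n$-sum pointwise for each fixed $m,h$, using derivative tests on the full phase $f(n)=\alpha m^3n^3+h(mn+u)^\gamma$, and you invoke Lemma \ref{Dirichlet} ``on the minor arcs'' to keep the cubic piece of $f^{(k)}$ in an admissible range. This is where the argument breaks down: Lemma \ref{type-I} carries no restriction on $\alpha$ whatsoever, because it feeds into Proposition \ref{proposition-T3-S3}, which must hold \emph{uniformly} in $\alpha$ (the circle-method dissection is applied later, to $S_3$, not to $T_3-S_3$). For $\alpha$ in the ``major-arc-like'' or intermediate regimes the cubic part of the derivatives is of uncontrolled size relative to the Piatetski--Shapiro part, and no choice of derivative order or exponent pair applied directly to $f(n)$ gives a pointwise bound uniform in $\alpha$; your appeal to a rational approximation of $\alpha$ cannot be used to exclude these cases. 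The nine exponents $a_1,\dots,a_9$ also do not arise from nine different derivative tests on the original sum, so the case-by-case matching you describe at the end has no basis.

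The paper's proof is built precisely to neutralize the $\alpha$-dependence before any analytic estimate is made. After removing $b_n$ by partial summation, it applies H\"older's inequality to the eighth power of the $n$-sum and then the Weyl--van der Corput inequality (Lemma \ref{Heath-Brown-1-lemma-1}) three times, with shifts $q,j,\ell$; the third difference of the cubic term is $6\alpha qj\ell m^3$, constant in $n$, so the only oscillation left in $n$ is $hm^\gamma\Delta(n^\gamma;q,j,\ell)\asymp hqj\ell m^\gamma n^{\gamma-3}$. That sum is treated either by the Kuzmin--Landau bound (\ref{van-de-corput-1}) when the derivative is small, or by the B-process (Lemma \ref{Jia-0-Lemma-3-1}); in the resulting dual sum over $m$ the $\alpha$-dependence sits only in the cubic polynomial $6\alpha qj\ell m^3$, which is annihilated by taking the fourth derivative, so Lemma \ref{Heath-Brown-3-Theorem-1} with $k=4$ applies with $g^{(4)}(m)\asymp hqj\ell M^{-1}x^{\gamma-3}$ independent of $\alpha$. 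The nine exponents in (\ref{type-1-condition}) then come out of optimizing the three differencing parameters $Q,J,L$ via Lemma \ref{Graham-Kolesnik} and summing over $h$ with the weight $\min(1,H_1/H)$. Without the H\"older-plus-triple-differencing step (or some substitute that removes the cubic phase before estimation), your pointwise strategy cannot deliver the uniformity in $\alpha$ that the lemma requires, so the proposal has a genuine gap rather than being an alternative proof.
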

\begin{proof}
  Applying partial summation to the inner sum, we have
\begin{eqnarray*}
  &          &  \Bigg|\sum_{m\sim M}\sum_{n\sim N}a_mb_ne\big(\alpha m^3n^3+h(mn+u)^\gamma\big)\Bigg|   \\
 & \leqslant &  \sum_{m\sim M}\Bigg|\sum_{n\sim N}b_ne\big(\alpha m^3n^3+h(mn+u)^\gamma\big)\Bigg|   \\
 & \ll  & (\log x)\sum_{m\sim M}\Bigg|\sum_{n\sim N}e\big(\alpha m^3n^3+h(mn+u)^\gamma\big)\Bigg| =:(\log x)\cdot K_h.
\end{eqnarray*}
Thus, we obtain
\begin{equation*}
   (\log x)^{-1}\cdot S_{I}(M,N) \ll \min\bigg(1,\frac{H_1}{H}\bigg)\sum_{h\sim H}K_h,
\end{equation*}
where
\begin{equation*}
   K_h=\sum_{m\sim M}\Bigg|\sum_{n\sim N}e\big(\alpha m^3n^3+h(mn+u)^\gamma\big)\Bigg|.
\end{equation*}
By H\"{o}lder's inequality, we have
\begin{equation}\label{K_h-8-power}
   K_h^8\ll  M^7 \sum_{m\sim M}\Bigg|\sum_{n\sim N}e\big(\alpha m^3n^3+h(mn+u)^\gamma\big)\Bigg|^8.
\end{equation}

  Suppose $z_n=z_n(m,u,\alpha)=\alpha m^3n^3+h(mn+u)^{\gamma}$. Let $Q,\,J,\,L\,$ be three positive integers, which
  satisfy $1\leqslant Q\leqslant N\log^{-1}x,\,1\leqslant J\leqslant N\log^{-1}x,\,1\leqslant L\leqslant N\log^{-1}x$. Applying
  Lemma~\ref{Heath-Brown-1-lemma-1} to the inner sum of (\ref{K_h-8-power}), we get
\begin{eqnarray*}
   \bigg|\sum_{n\sim N}e(z_n)\bigg|^2
         & \ll & \frac{N}{Q}\sum_{|q|\leqslant Q}\bigg(1-\frac{|q|}{Q}\bigg)\sum_{\substack{n\sim N\\ n+q\sim N}}e(z_{n+q}-z_n)  \\
         & \ll & \frac{N^2}{Q}+\frac{N}{Q}\sum_{1\leqslant q\leqslant Q}\bigg(1-\frac{q}{Q}\bigg)
                 \Bigg|\sum_{\substack{n\sim N\\ n+q\sim N}}e(z_{n+q}-z_n)\Bigg| .
\end{eqnarray*}
Therefore, by Cauchy's inequality, we have
\begin{eqnarray} \label{4-power}
  \bigg|\sum_{n\sim N}e(z_n)\bigg|^4
         & \ll &  \frac{N^4}{Q^2}+\frac{N^2}{Q^2}\Bigg(\sum_{1\leqslant q\leqslant Q}\bigg(1-\frac{q}{Q}\bigg)^2\Bigg)
                  \Bigg(\sum_{1\leqslant q\leqslant Q}\Bigg| \sum_{\substack{n\sim N\\ n+q\sim N}}e(z_{n+q}-z_n) \Bigg|^2\Bigg)
                             \nonumber   \\
         & \ll &   \frac{N^4}{Q^2}+\frac{N^2}{Q}\sum_{1\leqslant q\leqslant Q}\Bigg| \sum_{\substack{n\sim N\\ n+q\sim N}}e(z_{n+q}-z_n) \Bigg|^2.
\end{eqnarray}
Applying Lemma~\ref{Heath-Brown-1-lemma-1} to the inner sum of (\ref{4-power}), we have
\begin{eqnarray}\label{4-power-inner-sum}
     &     &  \Bigg| \sum_{\substack{n\sim N\\ n+q\sim N}}e(z_{n+q}-z_n) \Bigg|^2
                             \nonumber   \\
     & \ll & \frac{N}{J}\sum_{|j|\leqslant J}\bigg(1-\frac{|j|}{J}\bigg)\sum_{\substack{n\sim N,\,n+q\sim N \\n+j\sim N,\,n+q+j\sim N}}
                     \!\!\!\!\!\!e(z_{n+q+j}-z_{n+j}-z_{n+q}+z_n)
                           \nonumber   \\
    & \ll & \frac{N^2}{J}+\frac{N}{J}\sum_{1\leqslant j\leqslant J}\Bigg|\sum_{\substack{n\sim N,\,n+q\sim N \\n+j\sim N,\,n+q+j\sim N}}
               \!\!\!\!e(z_{n+q+j}-z_{n+j}-z_{n+q}+z_n)\Bigg|.
\end{eqnarray}
Putting (\ref{4-power-inner-sum}) into (\ref{4-power}), we have
\begin{eqnarray}
    \bigg|\sum_{n\sim N}e(z_n)\bigg|^4
     & \ll & \frac{N^4}{Q^2}+\frac{N^4}{J}
             +\frac{N^3}{JQ}\sum_{1\leqslant q\leqslant Q}\sum_{1\leqslant j\leqslant J}
                          \nonumber   \\
     &   &          \times \Bigg|\sum_{N<n\leqslant 2N-q-j}e(z_{n+q+j}-z_{n+j}-z_{n+q}+z_n) \Bigg|.
\end{eqnarray}
Therefore, by Cauchy's inequality, we have
\begin{eqnarray}\label{8-power}
   &     &   \bigg|\sum_{n\sim N}e(z_n)\bigg|^8    \nonumber   \\
     & \ll & \!\!\!\!  \frac{N^8}{Q^4}+\frac{N^8}{J^2} +\frac{N^6}{J^2Q^2}
                 \Bigg( \sum_{1\leqslant q\leqslant Q}\sum_{1\leqslant j\leqslant J}
                 \Bigg|\sum_{N<n\leqslant 2N-q-j}e(z_{n+q+j}-z_{n+j}-z_{n+q}+z_n) \Bigg|\Bigg)^2
                       \nonumber   \\
    & \ll & \!\!\!\!    \frac{N^8}{Q^4}+\frac{N^8}{J^2} +\frac{N^6}{JQ}
                  \sum_{1\leqslant q\leqslant Q}\sum_{1\leqslant j\leqslant J}
                       \Bigg|\sum_{N<n\leqslant 2N-q-j}e(z_{n+q+j}-z_{n+j}-z_{n+q}+z_n) \Bigg|^2.
\end{eqnarray}
Set $y_n=y_n(q,j)=z_{n+q+j}-z_{n+j}-z_{n+q}+z_n.$ Applying Lemma~\ref{Heath-Brown-1-lemma-1} to the inner sum of (\ref{8-power}), we have
\begin{eqnarray}\label{8-power-inner-sum}
  &     &   \Bigg|\sum_{N<n\leqslant 2N-q-j}e\big(y_n\big) \Bigg|^2
                      \nonumber   \\
  & \ll & \frac{N}{L}\sum_{|\ell|\leqslant L}\bigg(1-\frac{|\ell|}{L}\bigg)\sum_{\substack{N<n\leqslant 2N-q-j\\N<n+\ell\leqslant 2N-q-j}}
                    e\big(y_{n+\ell}-y_n\big)
                     \nonumber   \\
  & = & \frac{N^2}{L}+ \frac{N}{L}\sum_{1\leqslant|\ell|\leqslant L}\bigg(1-\frac{|\ell|}{L}\bigg)
                 \sum_{\substack{N<n\leqslant 2N-q-j-\ell}} e\big(y_{n+\ell}-y_n\big).
\end{eqnarray}
Putting (\ref{8-power-inner-sum}) into (\ref{8-power}), we have
\begin{eqnarray}\label{8-power-last}
   &     &   \bigg|\sum_{n\sim N}e(z_n)\bigg|^8    \nonumber   \\
     & \ll & \frac{N^8}{Q^4}+\frac{N^8}{J^2} +\frac{N^8}{L}+\frac{N^7}{LJQ}\sum_{q=1}^Q\sum_{j=1}^J\sum_{1\leqslant|\ell|\leqslant L}
                  \bigg(1-\frac{|\ell|}{L}\bigg) \sum_{\substack{N<n\leqslant 2N-q-j-\ell}} e\big(y_{n+\ell}-y_n\big).
                   \nonumber   \\
\end{eqnarray}
Put (\ref{8-power-last}) into (\ref{K_h-8-power}), we obtain
\begin{eqnarray}\label{K_h^8}
   K_h^8 &  \ll    &  \frac{x^8}{Q^4}+\frac{x^8}{J^2}+\frac{x^8}{L}+\frac{x^7}{LJQ}
                  \nonumber   \\
     &     &  \times \sum_{q=1}^Q\sum_{j=1}^J\sum_{\ell=1}^L
                 \Bigg| \sum_{m\sim M} \sum_{N<n\leqslant 2N-q-j-\ell} e\big(y_{n+\ell}-y_n\big) \Bigg|
                   \nonumber   \\
     & =: &  \frac{x^8}{Q^4}+\frac{x^8}{J^2}+\frac{x^8}{L}+\frac{x^7}{LJQ} \sum_{q=1}^Q\sum_{j=1}^J\sum_{\ell=1}^L \big|E_{q,j,\ell} \big|,
\end{eqnarray}
where
\begin{equation}\label{E-qjl}
   E_{q,j,\ell}=\sum_{m\sim M} \sum_{N<n\leqslant 2N-q-j-\ell} e\big(y_{n+\ell}-y_n\big).
\end{equation}
Let
\begin{eqnarray*}
   \Delta\big(n^\gamma;q,j,\ell\big) & = & (n+q+j+\ell)^\gamma-(n+q+j)^{\gamma}-(n+q+\ell)^{\gamma}-(n+j+\ell)^{\gamma}
                                                   \nonumber   \\
                                   &     &  +(n+q)^{\gamma} + (n+j)^{\gamma}+(n+\ell)^{\gamma}-n^{\gamma}.
\end{eqnarray*}
Then we have
\begin{eqnarray*}
            y_{n+\ell}-y_n
 &  =  &   z_{n+q+j+\ell}-z_{n+q+j}-z_{n+q+\ell}-z_{n+j+\ell}+z_{n+q}+z_{n+j}+z_{n+\ell}-z_n
                                \nonumber   \\
  &  =  &   6\alpha qj\ell m^3+\Big(h\big(m(n+q+j+\ell)+u\big)^\gamma-h\big(m(n+q+j)+u\big)^\gamma\Big)
                                \nonumber   \\
  &     &   - \Big(h\big(m(n+q+\ell)+u\big)^\gamma-h\big(m(n+q)+u\big)^\gamma\Big)
                                \nonumber   \\
  &     &   -\Big(h\big(m(n+j+\ell)+u\big)^\gamma-h\big(m(n+j)+u\big)^\gamma\Big)
                                \nonumber   \\
  &     &   +\Big(h\big(m(n+\ell)+u\big)^\gamma-h\big(mn+u\big)^\gamma\Big)
                                \nonumber   \\
  &  =  &   6\alpha qj\ell m^3+ hm^\gamma\Delta\big(n^\gamma;q,j,\ell\big)
                                \nonumber   \\
  &     &   +\gamma h\int_{0}^u\Big(  \big(m(n+q+j+\ell)+t\big)^{\gamma-1}-\big(m(n+q+j)+t\big)^{\gamma-1}\Big)\mathrm{d}t
                                \nonumber   \\
  &     &   -\gamma h\int_{0}^u\Big(  \big(m(n+q+\ell)+t\big)^{\gamma-1}-\big(m(n+q)+t\big)^{\gamma-1}\Big)\mathrm{d}t
                                \nonumber   \\
  &     &   -\gamma h\int_{0}^u\Big(  \big(m(n+j+\ell)+t\big)^{\gamma-1}-\big(m(n+j)+t\big)^{\gamma-1}\Big)\mathrm{d}t
                                \nonumber   \\
  &     &   +\gamma h\int_{0}^u\Big(  \big(m(n+\ell)+t\big)^{\gamma-1}-\big(mn+t\big)^{\gamma-1}\Big)\mathrm{d}t
                                \nonumber   \\
  & =:  &  6\alpha qj\ell m^3+ hm^\gamma\Delta\big(n^\gamma;q,j,\ell\big)+\mathcal{I}_1-\mathcal{I}_2-\mathcal{I}_3+\mathcal{I}_4.
\end{eqnarray*}
By noting that
\begin{equation*}
   \begin{array}{ll}
      \mathcal{I}_1 \asymp hm\ell \displaystyle \int_{0}^u \big(m(n+q+j)+t\big)^{\gamma-2}\mathrm{d}t,
              &   \quad \mathcal{I}_2 \asymp hm\ell \displaystyle\int_{0}^u \big(m(n+q)+t\big)^{\gamma-2}\mathrm{d}t,  \\
      \mathcal{I}_3 \asymp hm\ell \displaystyle \int_{0}^u \big(m(n+j)+t\big)^{\gamma-2}\mathrm{d}t,
              &   \quad \mathcal{I}_4 \asymp hm\ell \displaystyle\int_{0}^u \big(mn+t\big)^{\gamma-2}\mathrm{d}t , \\
   \end{array}
\end{equation*}
we obtain
\begin{eqnarray*}
   &   &  \mathcal{I}_1-\mathcal{I}_2-\mathcal{I}_3+\mathcal{I}_4= (\mathcal{I}_1-\mathcal{I}_2)-(\mathcal{I}_3-\mathcal{I}_4)
                       \nonumber   \\
   & \asymp & hm\ell \int_0^u  \Big(\big(m(n+q+j)+t\big)^{\gamma-2}- \big(m(n+q)+t\big)^{\gamma-2} \Big)\mathrm{d}t
                       \nonumber   \\
   &     &  - hm\ell \int_0^u \Big(\big(m(n+j)+t\big)^{\gamma-2}-\big(mn+t\big)^{\gamma-2}  \Big)\mathrm{d}t
                       \nonumber   \\
   & \asymp &  hm^2j\ell  \int_0^u \Big(\big(m(n+q)+t\big)^{\gamma-3}-\big(mn+t\big)^{\gamma-3}  \Big)\mathrm{d}t
                       \nonumber   \\
   & \asymp &  hm^3qj\ell  \int_0^u \big(mn+t\big)^{\gamma-4} \mathrm{d}t \asymp hqj\ell M^3x^{\gamma-4}.
\end{eqnarray*}
Thus, we get
\begin{eqnarray}\label{y-n-l}
  y_{n+\ell}-y_n
    & = & 6\alpha qj\ell m^3+ hm^\gamma\Delta\big(n^\gamma;q,j,\ell\big)+O\big(hqj\ell M^3x^{\gamma-4}\big)
                      \nonumber   \\
    & =: & G(m,n)++O\big(hqj\ell M^3x^{\gamma-4}\big).
 \end{eqnarray}
Putting (\ref{y-n-l}) into (\ref{E-qjl}), we have
\begin{eqnarray}\label{E-q-j-l-first}
   E_{q,j,\ell} & = & \sum_{m\sim M} \sum_{N<n\leqslant 2N-q-j-\ell}e\Big(G(m,n)+O\big(hqj\ell M^3x^{\gamma-4}\big)\Big)
                   \nonumber   \\
   & = & \sum_{m\sim M} \sum_{N<n\leqslant 2N-q-j-\ell}e\big(G(m,n)\big)\Big(1+O\big(hqj\ell M^3x^{\gamma-4}\big)\Big)
                    \nonumber   \\
   & = &  \sum_{m\sim M} \sum_{N<n\leqslant 2N-q-j-\ell}e\big(G(m,n)\big)+O\big(hqj\ell M^3x^{\gamma-3}\big).
\end{eqnarray}
For any $t\neq0,1$, we have
\begin{eqnarray}\label{Delta-1}
    \Delta\big(n^t;q,j,\ell\big)
     & = &  t\int_0^\ell \big( (n+q+j+\tau)^{t-1}-(n+q+\tau)^{t-1}  \big)\mathrm{d}\tau
                          \nonumber   \\
     &   &  -t\int_0^\ell \big( (n+j+\tau)^{t-1}-(n+\tau)^{t-1}  \big)\mathrm{d}\tau
                          \nonumber   \\
& \asymp &  t(t-1)j\int_0^\ell \big( (n+q+\tau)^{t-2}-(n+\tau)^{t-2}  \big)\mathrm{d}\tau
                          \nonumber   \\
& \asymp &  t(t-1)(t-2)qj\int_0^\ell  (n+\tau)^{t-3}\mathrm{d}\tau
                          \nonumber   \\
 & = & t(t-1)(t-2)qj\ell n^{t-3}
                          \nonumber   \\
 &   &       +t(t-1)(t-2)(t-3)qj \int_0^\ell\int_0^\tau(n+\xi)^{t-4}\mathrm{d}\xi\mathrm{d}\tau
                            \nonumber   \\
 &  =  & t(t-1)(t-2)qj\ell n^{t-3}+O\big( N^{t-4}qj\ell^2\big).
\end{eqnarray}
Similarly, we also have
\begin{equation}\label{Delta-2}
 \Delta\big(n^t;q,j,\ell\big) =t(t-1)(t-2)qj\ell n^{t-3}+O\big( N^{t-4}qj^2\ell\big),
\end{equation}
\begin{equation}\label{Delta-3}
 \Delta\big(n^t;q,j,\ell\big) =t(t-1)(t-2)qj\ell n^{t-3}+O\big( N^{t-4}q^2j\ell\big).
\end{equation}
Combining (\ref{Delta-1}), (\ref{Delta-2}) and (\ref{Delta-3}), we obtain
\begin{eqnarray}\label{Delta-total}
 \Delta\big(n^t;q,j,\ell\big) & = & t(t-1)(t-2)qj\ell n^{t-3}+O\big( N^{t-4}qj\ell(q+j+\ell)\big)
                                          \nonumber   \\
      & = & t(t-1)(t-2)qj\ell n^{t-3}\bigg(1+\bigg(\frac{q+j+\ell}{N}\bigg)\bigg).
\end{eqnarray}
Therefore, it is easy to get
\begin{eqnarray}
\frac{\partial G}{\partial n}  & = & \gamma hm^\gamma\Delta(n^{\gamma-1};q,j,\ell)
            \nonumber    \\
  & = & \gamma(\gamma-1)(\gamma-2)(\gamma-3)hqj\ell m^\gamma n^{\gamma-4}\bigg(1+O\bigg(\frac{q+j+\ell}{N}\bigg)\bigg)
\end{eqnarray}
and
\begin{eqnarray}\label{partial2G}
  \frac{\partial^2 G}{\partial n^2}  & = & \!\!\gamma(\gamma-1) hm^\gamma\Delta(n^{\gamma-2};q,j,\ell)
                        \nonumber    \\
  & = & \!\!\gamma(\gamma-1)(\gamma-2)(\gamma-3)(\gamma-4)hqj\ell m^\gamma n^{\gamma-5}\bigg(1+O\bigg(\frac{q+j+\ell}{N}\bigg)\bigg).
\end{eqnarray}
If $\big|\gamma(\gamma-1)(\gamma-2)(\gamma-3)hqj\ell m^\gamma n^{\gamma-4}\big|\leqslant1/500$, then from (\ref{van-de-corput-1}) of
Lemma~\ref{Jia-0-Lemma-1} we have
\begin{equation}\label{<1/500}
  \sum_{m\sim M}\sum_{N<n\leqslant 2N-q-j-\ell}e\big(G(m,n)\big)
   \ll MN^4(hqj\ell M^\gamma N^\gamma)^{-1}\asymp MN^4(hqj\ell x^\gamma)^{-1}.
\end{equation}
In the rest of this Lemma, we always suppose that $\big|\gamma(\gamma-1)(\gamma-2)(\gamma-3)hqj\ell m^\gamma n^{\gamma-4}\big|>1/500$. By
Lemma \ref{Jia-0-Lemma-3-1}, we have
\begin{eqnarray}
  &   &  \sum_{N<n\leqslant2N-q-j-\ell}e\big(G(m,n)\big)
             \nonumber   \\
  & =  & e\Big(\frac{1}{8}\Big) \sum_{\alpha<\nu\leqslant\beta}\bigg(\frac{\partial^2 G}{\partial n^2}(m,n_{\nu})\bigg)^{-1/2}
          e\big(G(m,n_\nu)-\nu n_\nu\big)+R_1(m,q,j,\ell),
\end{eqnarray}
where
\begin{equation}\label{G=nu}
  \frac{\partial G}{\partial n}(m,n_\nu)=\gamma hm^\gamma \Delta(n_\nu^{\gamma-1};q,j,\ell)=\nu,
\end{equation}
\begin{equation}
  \alpha=\frac{\partial G}{\partial n}(m,N),\quad \beta=\frac{\partial G}{\partial n}(m,2N-q-j-\ell),
\end{equation}
\begin{equation}
R=N^5(hqj\ell x^\gamma)^{-1},\qquad \nu=\frac{\partial G}{\partial n}(m,n_\nu)\asymp hqj\ell m^{\gamma}N^{\gamma-4},
\end{equation}
\begin{equation}
 R_{1}(m,q,j,\ell)\ll \log x+RN^{-1}+\min\bigg(\sqrt{R},\max\bigg(\frac{1}{\|\alpha\|},\frac{1}{\|\beta\|}\bigg)\bigg).
\end{equation}
From Lemma~\ref{Jia-0-Lemma-3-2}, the contribution of $R_{1}(m,q,j,\ell)$ to $E_{q,j,\ell}$ is
\begin{eqnarray}\label{E-qjl-error-contribution}
  & \ll &   \!\!\!M\log x+MRN^{-1}+\sum_{m\sim M}\min\bigg(\sqrt{R},\frac{1}{\|\alpha\|}\bigg)
            +\sum_{m\sim M}\min\bigg(\sqrt{R},\frac{1}{\|\beta\|}\bigg)
               \nonumber \\
  & \ll & \!\!\!M\log x+x^{4-\gamma}(hqj\ell M^3)^{-1}+(hqj\ell)^{1/2}M^{3/2}x^{(\gamma-3)/2}\log x.
\end{eqnarray}
Now, we only need to estimate the exponential sum
\begin{eqnarray}
 &  &    \sum_{m\sim M}\sum_{\alpha<\nu \leqslant\beta}\bigg(\frac{\partial^2 G}{\partial n^2}(m,n_\nu)\bigg)^{-1/2} e\big(G(m,n_\nu)-\nu n_\nu\big)
                \nonumber \\
 &  = &   \sum_{\nu}\sum_{m\in\mathfrak{I}_\nu}\bigg(\frac{\partial^2 G}{\partial n^2}(m,n_\nu)\bigg)^{-1/2} e\big(G(m,n_\nu)-\nu n_\nu\big),
\end{eqnarray}
where $\mathfrak{I}_\nu$ is a subinterval of $(M,2M]$.

  For fixed $\nu$, define $\Delta_\lambda=\Delta(n_\nu^{\lambda};q,j,\ell)$, where $\lambda$ is arbitrary real number. Taking derivative of $m$ on both sides of the equation (\ref{G=nu}), we have
\begin{equation}
  n_\nu'=-\frac{\gamma\Delta_{\gamma-1}}{(\gamma-1)m\Delta_{\gamma-2}}.
\end{equation}
Combining (\ref{Delta-1}) and (\ref{partial2G}), we get
\begin{eqnarray}
   &    &  \frac{\mathrm{d}}{\mathrm{d}m}\bigg(\frac{\partial^2 G}{\partial n^2}(m,n_\nu)\bigg)
              \nonumber   \\
     & = & \frac{\gamma^2hm^{\gamma-1}}{\Delta_{\gamma-2}} \Big((\gamma-1)\Delta_{\gamma-2}^2-(\gamma-2)\Delta_{\gamma-1}\Delta_{\gamma-3}\Big)
            \nonumber   \\
     & = & \gamma^2(\gamma-1)(\gamma-2)(\gamma-3)hqj\ell m^{\gamma-1}n_\nu^{\gamma-5}\bigg(1+O\bigg(\frac{q+j+\ell}{N}\bigg)\bigg),
\end{eqnarray}
so that $\big(\frac{\partial^2 G}{\partial n^2}(m,n_\nu)\big)^{-1/2}$ is monotonic in $m$.

  Let $g(m)=G\big(m,n_\nu(m)\big)-\nu n_{\nu}(m)$. By a series of simple calculation, we obtain
\begin{eqnarray}
  g'(m) & = & 18\alpha qj\ell m^2+\gamma hm^{\gamma-1}\Delta_\gamma,
                    \nonumber   \\
  g''(m) & = & 36\alpha qj\ell m+\frac{\gamma h}{\gamma-1}\cdot
               \frac{(\gamma-1)^2\Delta_\gamma\Delta_{\gamma-2}-\gamma^2\Delta_{\gamma-1}^2}{m^{2-\gamma}\Delta_{\gamma-2}}
                         \nonumber   \\
        & =: &  36\alpha qj\ell m+\frac{\gamma h}{\gamma-1}\cdot \frac{g_1(m)-g_2(m)}{g_0(m)},
\end{eqnarray}
where
\begin{equation*}
  g_1(m)=(\gamma-1)^2\Delta_\gamma\Delta_{\gamma-2},\quad g_2(m)=\gamma^2\Delta_{\gamma-1}^2,\quad g_0(m)=m^{2-\gamma}\Delta_{\gamma-2}.
\end{equation*}
Hence
\begin{equation}\label{g-3-dirivative}
  \!\!\!\!\!\! \!\!\!\!\!\!  g'''(m)=36\alpha qj\ell +\frac{\gamma h}{\gamma-1}\cdot\frac{(g_1'(m)-g_2'(m))g_0(m)-g_0'(m)(g_1(m)-g_2(m))}{g_0^2(m)},
\end{equation}
where
\begin{equation}\label{three-derivative}
  \begin{array}{l}
     g_1'(m)=(\gamma-1)^2\big(\gamma\Delta_{\gamma-1}\Delta_{\gamma-2}+(\gamma-2)\Delta_{\gamma}\Delta_{\gamma-3}\big)n_\nu'(m) , \\
     g_2'(m)=2\gamma^2(\gamma-1)\Delta_{\gamma-1}\Delta_{\gamma-2}n_\nu'(m) ,  \\
     g_0'(m)=\displaystyle\frac{(2-\gamma)m^{1-\gamma}}{(\gamma-1)\Delta_{\gamma-2}} \big((\gamma-1)\Delta_{\gamma-2}^2+\gamma\Delta_{\gamma-1}\Delta_{\gamma-3}\big).
  \end{array}
\end{equation}
Putting (\ref{three-derivative}) into (\ref{g-3-dirivative}), we get
\begin{eqnarray}
  g'''(m) & = & 36\alpha qj \ell+\frac{\gamma h}{(\gamma-1)^2}
                  \nonumber   \\
   &  & \times \frac{3\gamma^2(\gamma-1)\Delta_{\gamma-1}^2\Delta_{\gamma-2}^2+(\gamma-1)^3(\gamma-2)\Delta_{\gamma}\Delta_{\gamma-2}^3-\gamma^3(\gamma-2)   \Delta_{\gamma-1}^3\Delta_{\gamma-3}   }{m^{3-\gamma}\Delta_{\gamma-2}^3}
                  \nonumber   \\
   &  =: &  36\alpha qj \ell+\frac{\gamma h}{(\gamma-1)^2}\cdot \frac{g_3(m)-g_4(m)}{g_5(m)},
\end{eqnarray}
where
\begin{equation*}
   \begin{array}{l}
     g_3(m)=3\gamma^2(\gamma-1)\Delta_{\gamma-1}^2\Delta_{\gamma-2}^2+(\gamma-1)^3(\gamma-2)\Delta_{\gamma}\Delta_{\gamma-2}^3, \\
     g_4(m)=\gamma^3(\gamma-2)   \Delta_{\gamma-1}^3\Delta_{\gamma-3} ,\quad g_5(m)=m^{3-\gamma}\Delta_{\gamma-2}^3.
   \end{array}
\end{equation*}
Hence
\begin{equation}\label{g-4-dirivative}
  \!\!\!\!\!\! \!\!\!\!\!\!  g^{(4)}(m)=\frac{\gamma h}{(\gamma-1)^2}\cdot\frac{(g_3'(m)-g_4'(m))g_5(m)-g_5'(m)(g_3(m)-g_4(m))}{g_5^2(m)},
\end{equation}
where
\begin{eqnarray}\label{four-derivative}
   &  &   g_3'(m)=\Big(\gamma(\gamma-1)^2(\gamma+1)(\gamma+2)\Delta_{\gamma-1}\Delta_{\gamma-2}^3
              +6\gamma^2(\gamma-1)(\gamma-2)\Delta_{\gamma-1}^2\Delta_{\gamma-2}\Delta_{\gamma-3}
                   \nonumber  \\
    &  &       \qquad\qquad \qquad\qquad  +3(\gamma-1)^3(\gamma-2)^2 \Delta_{\gamma} \Delta_{\gamma-2}^2  \Delta_{\gamma-3}    \Big)n_\nu'(m) ,
                   \nonumber  \\
 &  &    g_4'(m)=\Big(3\gamma^3(\gamma-1)(\gamma-2)\Delta_{\gamma-1}^2
                \Delta_{\gamma-2}\Delta_{\gamma-3}+\gamma^3(\gamma-2)(\gamma-3)\Delta_{\gamma-1}^3\Delta_{\gamma-4}\Big)  n_\nu'(m) ,
                   \nonumber  \\
  &  &    g_5'(m)=\displaystyle\frac{m^{2-\gamma}}{\gamma-1}
      \Big((\gamma-1)(3-\gamma)\Delta_{\gamma-2}^3-3\gamma(\gamma-2)\Delta_{\gamma-1}\Delta_{\gamma-2}\Delta_{\gamma-3}\Big).
\end{eqnarray}
Put (\ref{four-derivative}) into (\ref{g-4-dirivative}), we obtain
\begin{eqnarray}
  g^{(4)}(m) & = & -\frac{\gamma h}{(\gamma-1)^3}\cdot \frac{1}{m^{4-\gamma}\Delta^5_{\gamma-2}}
                   \Big( \gamma^2(\gamma-1)^2 (\gamma^2+11)\Delta_{\gamma-1}^2\Delta_{\gamma-2}^4
                           \nonumber   \\
    &    &   -2\gamma^3(\gamma-1)(\gamma-2)(\gamma+3) \Delta_{\gamma-1}^3\Delta_{\gamma-2}^2 \Delta_{\gamma-3}
                           \nonumber   \\
    &    &   -r^{4}(\gamma-2) (\gamma-3) \Delta_{\gamma-1}^4\Delta_{\gamma-2}\Delta_{\gamma-4}
                           \nonumber   \\
    &    &   -(\gamma-1)^4(\gamma-2)(\gamma-3) \Delta_{\gamma} \Delta_{\gamma-2}^5 +3\gamma^4(\gamma-2)^2\Delta_{\gamma-1}^4\Delta_{\gamma-3}^2 \Big).
\end{eqnarray}
Combining (\ref{Delta-total}), we have
\begin{equation}
  g^{(4)}(m)=c_0(\gamma) hqj\ell m^{\gamma-4}n_\nu^{\gamma-3}\bigg(1+\bigg(\frac{q+j+\ell}{N}\bigg)\bigg)\asymp hqj\ell M^{-1}x^{\gamma-3},
\end{equation}
where $c_0(\gamma)=-8\gamma^2(\gamma-1)(\gamma-2)^2(\gamma-3)(\gamma-4)^{-3}(3\gamma-8)$.

By partial summation and Lemma \ref{Heath-Brown-3-Theorem-1} with parameter $k=4$, we obtain
\begin{eqnarray}\label{E-qjl-main}
  &   &  \sum_{\nu}\sum_{m\in \mathfrak{I}_\nu}\Bigg(\frac{\partial^2G}{\partial n^2}(m,n_\nu)\bigg)^{-1/2}
         e\big(G(m,n_\nu)-\nu n_\nu\big)
              \nonumber   \\
  & \ll & M^{1+\varepsilon}\Big( \big(hqj\ell M^{-1}x^{\gamma-3}\big)^{1/12}+ M^{-1/12}+M^{-1/6}\big(hqj\ell M^{-1}x^{\gamma-3}\big)^{-1/24}\Big)
              \nonumber   \\
  &    & \times \big(hqj\ell M^\gamma N^{\gamma-5}\big)^{-1/2}\cdot hqj\ell M^{\gamma}N^{\gamma-4}
             \nonumber   \\
  & \ll & \big(hqj\ell \big)^{7/12}M^{29/12+\varepsilon}x^{7(\gamma-3)/12} +\big(hqj\ell \big)^{1/2}M^{29/12+\varepsilon}x^{(\gamma-3)/2}
              \nonumber   \\
  &    &   +\big(hqj\ell \big)^{11/24}M^{19/8+\varepsilon}x^{11(\gamma-3)/24}.
\end{eqnarray}
From (\ref{E-q-j-l-first}), (\ref{<1/500}), (\ref{E-qjl-error-contribution}) and (\ref{E-qjl-main}), we get
\begin{eqnarray}\label{E-qjl-upper-bound}
     &     &   (\log x)^{-1}\cdot E_{q,j,\ell}
                                                 \nonumber   \\
     & \ll &  hqj\ell M^3x^{\gamma-3}+M+(hqj\ell M^3)^{-1}x^{4-\gamma}+(hqj\ell)^{1/2}M^{29/12+\varepsilon}x^{(\gamma-3)/2}
                                                \nonumber   \\
        &    &   +(hqj\ell)^{7/12}M^{29/12+\varepsilon}x^{7(\gamma-3)/12}+(hqj\ell)^{11/24}M^{19/8+\varepsilon}x^{11(\gamma-3)/24}.
\end{eqnarray}
Putting (\ref{E-qjl-upper-bound}) into (\ref{K_h^8}), we get
\begin{eqnarray}\label{K_h^8-log-4}
 (\log x)^{-4}\cdot K_{h}^8  & \ll &  x^8Q^{-4}+x^8J^{-2}+x^8L^{-1}+Mx^7+\big(hQJLM^3\big)^{-1}x^{11-\gamma}
                                            \nonumber   \\
  &   & +hQJLM^3x^{\gamma+4}+\big(hQJL\big)^{7/12}M^{29/12+\varepsilon}x^{7(\gamma+9)/12}
                                             \nonumber   \\
  &   & +\big(hQJL\big)^{1/2}M^{29/12+\varepsilon}x^{(\gamma+11)/2}
                                             \nonumber   \\
  &   &  +\big(hQJL\big)^{11/24}M^{19/8+\varepsilon}x^{11\gamma/24+45/8}.
\end{eqnarray}
Next, we apply Lemma \ref{Graham-Kolesnik} to (\ref{K_h^8-log-4}) in $Q,\,J,\,L$ one step at a time. First, for fixed $Q$ and $J$, we choose an optimal $L\in[1,N\log^{-1}x]$ and obtain
\begin{eqnarray}
   (\log x)^{-4}\cdot K_{h}^8  & \ll & x^{15/2}+Mx^7\log x+M^{8/19+\varepsilon}x^{140/19}+M^{11/18+\varepsilon}x^{22/3}
                                           \nonumber   \\
   &  & +M^{24/35+\varepsilon}x^{256/35}+x^8Q^{-4}+\big(hQJ\big)^{-1}M^{-2}x^{10-\gamma}\log x+x^8J^{-2}
                                           \nonumber   \\
   &  & +hQJM^3x^{\gamma+4}   +\big(hQJ\big)^{7/12}M^{29/12+\varepsilon}x^{7(\gamma+9)/12}
                                          \nonumber   \\
   &  & +\big(hQJ\big)^{1/2}M^{29/12+\varepsilon}x^{(\gamma+11)/2} +\big(hQJ\big)^{11/24}M^{19/8+\varepsilon}x^{11\gamma/24+45/8}
                                            \nonumber   \\
   &  & +\big(hQJ\big)^{1/3}M^{29/18+\varepsilon}x^{(\gamma+19)/3}  + \big(hQJ\big)^{11/35}M^{57/35+\varepsilon}x^{(11\gamma+223)/35}
                                               \nonumber   \\
   &  & +\big(hQJ\big)^{1/2}M^{3/2}x^{\gamma/2+6}  +\big(hQJ\big)^{7/19}M^{29/19+\varepsilon}x^{7(\gamma+17)/19}.
\end{eqnarray}
Second, for fixed $Q$, we choose an optimal $J\in[1,N\log^{-1}x]$ and obtain
\begin{eqnarray*}
           (\log x)^{-6}\cdot K_{h}^8
    & \ll & x^{15/2}+M^{1+\varepsilon}x^7+M^{8/19+\varepsilon}x^{140/19}+M^{11/18+\varepsilon}x^{22/3}
                                               \nonumber   \\
    &   &    +M^{24/35+\varepsilon}x^{256/35}+M^{15/26+\varepsilon}x^{189/26}+M^{17/24+\varepsilon}x^{29/4}
                                                \nonumber   \\
    &    &   +M^{35/46+\varepsilon}x^{333/46}+h^{-1}M^{-1}x^{9-\gamma}Q^{-1}+x^8Q^{-4}
\end{eqnarray*}                                                  
\begin{eqnarray}
    &    &   +hM^3x^{\gamma+4}Q+h^{7/12}M^{29/12+\varepsilon}x^{7(\gamma+9)/12}Q^{7/12}
                                                \nonumber   \\
    &    &   +h^{1/2}M^{29/12+\varepsilon}x^{(\gamma+11)/2}Q^{1/2} + h^{11/24}M^{19/8+\varepsilon}x^{11\gamma/24+45/8}Q^{11/24}
                                                \nonumber   \\
    &   &  +h^{1/2}M^{3/2}x^{\gamma/2+6}Q^{1/2}+h^{7/19}M^{29/19+\varepsilon}x^{7(\gamma+17)/19}Q^{7/19}
                                                   \nonumber   \\
    &   &  +h^{1/3}M^{29/18+\varepsilon}x^{(\gamma+19)/3}Q^{1/3}  + h^{11/35}M^{57/35+\varepsilon}x^{(11\gamma+223)/35}Q^{11/35}
                                                  \nonumber   \\
    &   &  +h^{2/3}M^2x^{2(\gamma+8)/3}Q^{2/3} + h^{14/31}M^{58/31+\varepsilon}x^{14(\gamma+13)/31}Q^{14/31}
                                                     \nonumber   \\
    &   &  +h^{2/5}M^{29/15+\varepsilon}x^{2\gamma/5+6}Q^{2/5} +  h^{22/59}M^{114/59+\varepsilon}x^{(22\gamma+358)/59}Q^{22/59}
                                                  \nonumber   \\
    &   & + h^{2/5}M^{6/5}x^{2(\gamma+16)/5}Q^{2/5}  + h^{14/45}M^{58/45+\varepsilon}x^{14(\gamma+21)/45}Q^{14/45}
                                                  \nonumber   \\
    &  & +h^{2/7}M^{29/21+\varepsilon}x^{2(\gamma+23)/7}Q^{2/7}  +  h^{22/81}M^{38/27+\varepsilon}x^{(22\gamma+534)/81}Q^{22/81}.
\end{eqnarray}
Finally, we choose an optimal $Q\in[1,N\log^{-1}x]$ and obtain
\begin{eqnarray}\label{SI-upper}
   &     &     (\log x)^{-2}\cdot K_h  
                     \nonumber   \\
   & \ll &  x^{15/16}+M^{1/8+\varepsilon}x^{7/8}+M^{1/19+\varepsilon}x^{35/38}+M^{11/144+\varepsilon}x^{11/12}
                                                 \nonumber   \\
   &   &  +M^{3/35+\varepsilon}x^{32/35}+M^{17/192+\varepsilon}x^{29/32}+M^{35/368+\varepsilon}x^{333/368}
                                                \nonumber   \\
   &   &   +M^{1/2}x^{1/2}+M^{23/140+\varepsilon}x^{117/140}+M^{23/162+\varepsilon}x^{139/162}
                                                \nonumber   \\
   &   &   +M^{1/14}x^{25/28}+M^{11/118+\varepsilon}x^{105/118}+M^{23/216+\varepsilon}x^{8/9}
                                                \nonumber   \\
   &   &  +M^{23/206+\varepsilon}x^{183/206}+x^{1-\gamma/8}+h^{1/8}M^{3/8}x^{(\gamma+4)/8}
                                                 \nonumber   \\
   &   &  +h^{7/96}M^{29/96+\varepsilon}x^{7(\gamma+9)/96}+h^{1/16}M^{29/96+\varepsilon}x^{(\gamma+11)/16}
                                                  \nonumber   \\
   &   &  +h^{11/192}M^{19/64+\varepsilon}x^{11\gamma/192+45/64}  +  h^{1/16}M^{3/16}x^{(\gamma+12)/16}
                                                   \nonumber   \\
   &   &   +h^{7/152}M^{29/152+\varepsilon}x^{7(\gamma+17)/152} +h^{1/24}M^{29/144+\varepsilon}x^{(\gamma+19)/24}
                                                    \nonumber   \\
   &   &   +h^{11/280}M^{57/280+\varepsilon}x^{(11\gamma+223)/280}  +h^{1/12}M^{1/4}x^{(\gamma+8)/12}
                                                \nonumber   \\
   &   &   +h^{7/124}M^{29/124+\varepsilon}x^{7(\gamma+13)/124}+ h^{1/20}M^{29/120+\varepsilon}x^{(\gamma+15)/20}
                                                 \nonumber   \\
   &   &  +h^{11/236}M^{57/236+\varepsilon}x^{(11\gamma+179)/236}  +  h^{1/20}M^{3/20}x^{(\gamma+16)/20}
                                                  \nonumber   \\
   &   &  +h^{7/180}M^{29/180+\varepsilon}x^{7(\gamma+21)/180} +  h^{1/28}M^{29/168+\varepsilon}x^{(\gamma+23)/28}
                                                    \nonumber   \\
   &   &  +h^{11/324}M^{19/108+\varepsilon}x^{(11\gamma+267)/324}  + h^{1/10}M^{3/10}x^{(\gamma+6)/10}
                                                   \nonumber   \\
   &   &  +h^{7/110}M^{29/110+\varepsilon}x^{7(\gamma+11)/110}  +h^{1/18}M^{29/108+\varepsilon}x^{(\gamma+13)/18}
                                                    \nonumber   \\
   &   &  +h^{11/214}M^{57/214+\varepsilon}x^{(11\gamma+157)/214} +  h^{1/18}M^{1/6}x^{(\gamma+14)/18}
                                                      \nonumber   \\
   &   &  +h^{7/166}M^{29/166+\varepsilon}x^{7(\gamma+19)/166} + h^{1/26}M^{29/156+\varepsilon}x^{(\gamma+21)/26}
                                                    \nonumber   \\
   &   &  +h^{11/302}M^{57/302+\varepsilon}x^{(11\gamma+245)/302} +  h^{1/14}M^{3/14}x^{(\gamma+10)/14}
                                                     \nonumber   \\
   &   &  +h^{7/138}M^{29/138+\varepsilon}x^{7(\gamma+15)/138} + h^{1/22}M^{29/132+\varepsilon}x^{(\gamma+17)/22}
                                                     \nonumber   \\
   &   &  +h^{11/258}M^{57/258+\varepsilon}x^{(11\gamma+201)/258} + h^{1/22}M^{3/22}x^{(\gamma+18)/22}
                                                   \nonumber   \\
   &   &  +h^{7/194}M^{29/194+\varepsilon}x^{7(\gamma+23)/194}  + h^{1/30}M^{29/180+\varepsilon}x^{(\gamma+25)/30}
                                                    \nonumber   \\
   &   &  +h^{11/346}M^{57/346+\varepsilon}x^{(11\gamma+289)/346}.
\end{eqnarray}

 From (\ref{SI-upper}) we know that, under the condition (\ref{type-1-condition}), the result of Lemma~\ref{type-I} follows.
\end{proof}

\section{Proof of Theorem~\ref{Hua's-theorem-k=3}}

 In order to prove Theorem \ref{Hua's-theorem-k=3}, it is sufficient for us to prove the following proposition.
\begin{proposition}\label{proposition-T3-S3}
   Suppose that $0<\gamma\leqslant1,\,\delta>0$ and
   \begin{equation*}
       1714(1-\gamma)+1725\delta<46.
   \end{equation*}
Then, uniformly in $\alpha$, we have
\begin{equation*}
   T_{3}(N,\alpha)=S_{3}(N,\alpha)+O\big(P^{1-\delta-\varepsilon}\big),
\end{equation*}
 where the implied constant depends only on $\gamma$ and $\delta$.
\end{proposition}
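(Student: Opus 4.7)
The plan is to pass from $T_3$ to $S_3$ by inserting the indicator of the Piatetski--Shapiro set explicitly. Since $p\in \mathcal{P}_\gamma$ if and only if $[-p^\gamma]\neq[-(p+1)^\gamma]$, one has
$$\mathbf{1}_{\mathcal{P}_\gamma}(p)=(p+1)^\gamma-p^\gamma+\psi\big(-(p+1)^\gamma\big)-\psi(-p^\gamma),$$
and consequently
$$T_3(N,\alpha)=\sum_{p\leqslant P}\tfrac{1}{\gamma}p^{1-\gamma}\big((p+1)^\gamma-p^\gamma\big)e(\alpha p^3)+\tfrac{1}{\gamma}\sum_{p\leqslant P}p^{1-\gamma}\big(\psi(-(p+1)^\gamma)-\psi(-p^\gamma)\big)e(\alpha p^3).$$
Since $\tfrac{1}{\gamma}p^{1-\gamma}((p+1)^\gamma-p^\gamma)=1+O(p^{-1})$, the first term equals $S_3(N,\alpha)+O(\mathcal{L})$, and the proposition reduces to bounding the $\psi$-difference sum by $O(P^{1-\delta-\varepsilon})$.

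Next, passing from primes to $\Lambda(n)$ and using partial summation (the factor $p^{1-\gamma}$ will cancel against the derivative $\gamma t^{\gamma-1}$ arising from writing $\psi(-(n+1)^\gamma)-\psi(-n^\gamma)$ as a short integral against $d\psi(-t^\gamma)$), after a dyadic decomposition the task becomes to prove
$$\sum_{n\sim x}\Lambda(n)\,\psi(-n^\gamma)\,e(\alpha n^3)\ll x^{1-\delta-\varepsilon},\qquad x\asymp P.$$
I would then invoke Lemma~\ref{Heath-Brown-1-lemma-2-1} with a parameter $H$ chosen of size roughly $x^{1-\gamma+\delta}$ to expand $\psi(-n^\gamma)$ as a truncated Fourier series plus the $g(-n^\gamma,H)$ tail. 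The tail is controlled by its Fourier coefficients (\ref{psi(theta)-fourier-coefficient-estimate}) and the trivial bound $\Lambda(n)\ll\log n$, giving an acceptable contribution once $H$ is chosen correctly. The main term reduces, after restricting $h$ to a dyadic block $h\sim H_1$ with $1\leqslant H_1\leqslant H$, to bounding
$$\min\Big(1,\tfrac{H_1}{H}\Big)\sum_{h\sim H_1}\Big|\sum_{n\sim x}\Lambda(n)\,e\big(\alpha n^3-h n^\gamma\big)\Big|.$$

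To the inner sum I would apply Heath--Brown's identity (Lemma~\ref{Heath-Brown-identity}) with $k=3$ and $z=x^{1/3}$, producing $O(\mathcal{L}^2)$ sextuple sums of the shape $\mathop{\sum\cdots\sum}n_1\cdots n_6=n$, each of which splits dyadically into a bilinear form of the type $S_I(M,N)$ or $S_{II}(M,N)$ studied in Lemmas~\ref{type-II} and~\ref{type-I}. The shift parameter $u$ appearing in those lemmas is precisely the one produced when passing from $(p+1)^\gamma$ to the integral representation at the beginning. Each dyadic piece will then be bounded either by Lemma~\ref{type-II} (when the variable $M$ lies in the Type~II window $x^{24(1-\gamma)+24\delta+\varepsilon}\ll M\ll x^{\gamma-2\delta-\varepsilon}$) or by Lemma~\ref{type-I} (when $M\ll x^a$), producing the required saving $x^{1-\delta-\varepsilon}$.

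The main obstacle is the combinatorial bookkeeping: for every sextuple factorization $n=n_1\cdots n_6$ delivered by the Heath--Brown identity with smoothness threshold $z=x^{1/3}$, at least one partial product must fall into the admissible Type~I or Type~II window. Because $z=x^{1/3}$ exactly separates the ``trivially Type~I'' and ``trivially Type~II'' regimes, the balancing of Type~II range against the worst of the nine Type~I exponents $a_1,\dots,a_9$ of (\ref{type-1-condition}) is what tightens the final inequality. A careful optimization of the free parameter $H$ (and hence of the dyadic block $H_1$) together with the Heath--Brown split point yields, after elimination, exactly the numerical condition $1714(1-\gamma)+1725\delta<46$ claimed in the proposition.
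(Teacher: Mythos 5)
There is a genuine gap at the very first reduction. The quantity that must be bounded is the weighted difference sum $\frac{1}{\gamma}\sum_{p\sim x}p^{1-\gamma}e(\alpha p^3)\big(\psi(-(p+1)^\gamma)-\psi(-p^\gamma)\big)$, and your passage from it to $\sum_{n\sim x}\Lambda(n)\psi(-n^\gamma)e(\alpha n^3)$ does not work. You cannot trade the difference for a factor $\gamma t^{\gamma-1}$ by ``integrating against $d\psi(-t^\gamma)$'': the function $\psi(-t^\gamma)$ is a jump function, and its Stieltjes differential is $-\gamma t^{\gamma-1}\,\mathrm{d}t$ \emph{plus} the jump measure supported exactly at the points with $t^\gamma\in\mathbb{Z}$, i.e.\ precisely the set being detected; nor can you move the difference onto the coefficients by partial summation, since $\Lambda(n)n^{1-\gamma}e(\alpha n^3)$ is not slowly varying in $n$ (the phase $\alpha n^3$ changes by about $3\alpha n^2$ between consecutive integers). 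Moreover, your reduced statement is strictly weaker than what the proposition needs. In the correct treatment one expands \emph{both} $\psi$-values by Lemma~\ref{Heath-Brown-1-lemma-2-1}, keeps the difference, and writes, for $h\leqslant x^{1-\gamma}$, $e\big(h(n+1)^\gamma\big)-e\big(hn^\gamma\big)=2\pi ih\gamma\int_0^1(n+u)^{\gamma-1}e\big(h(n+u)^\gamma\big)\,\mathrm{d}u$; the factor $h(n+u)^{\gamma-1}$ is what cancels the Fourier coefficient $1/h$ against the weight $n^{1-\gamma}$, so every $h\leqslant x^{1-\gamma}$ enters with weight $\asymp 1$ (and with weight $x^{1-\gamma}/h$ for larger $h$, by the trivial bound on the difference). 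One must therefore prove $\min\big(1,x^{1-\gamma}/H\big)\sum_{h\sim H}\big|\sum_{n\sim x}\Lambda(n)e\big(\alpha n^3+h(n+u)^\gamma\big)\big|\ll x^{1-\delta-\varepsilon}$, which is exactly the shape treated by Lemmas~\ref{type-II} and~\ref{type-I}. Your reduced sum carries only the coefficient $1/h$, smaller by a factor of order $h$ on every block $h\leqslant x^{1-\gamma}$, so even a complete proof of your bound would not imply the proposition; this also explains why in your write-up the shift $u$ and the weight $\min(1,H_1/H)$ appear out of nowhere (your own expansion would give blocks weighted by $1/H_1$ and a phase $\alpha n^3-hn^\gamma$ with no $u$).

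Apart from this, your outline does follow the intended route: truncation at $H\asymp x^{1-\gamma+\delta}$, control of the $g(\theta,H)$ tail via (\ref{psi(theta)-fourier-coefficient-estimate}) and a van der Corput bound, Heath--Brown's identity with $k=3$, and the dispatch of each bilinear piece through the Type~II window $x^{24(1-\gamma)+24\delta+\varepsilon}\ll M\ll x^{\gamma-2\delta-\varepsilon}$ or the Type~I range $M\ll x^a$; and you correctly locate the source of the numerology, since $24(1-\gamma)+24\delta<a_9$ is precisely $1714(1-\gamma)+1725\delta<46$. But the combinatorial step is only asserted: you still need the explicit case analysis (the paper's three cases, in particular the grouping argument when every $N_j<x^{1-c}$, which uses $1-c<c-b$ to land a partial product in the Type~II window), as well as the easy range $p\leqslant P^{1/2}$ and the passage from primes to $\Lambda$. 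Those parts are routine; the flawed reduction described above is the substantive gap and must be redone as indicated before the Type~I/II machinery can legitimately be invoked.
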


\subsection{Proof of Proposition~\ref{proposition-T3-S3}}

 We have
 \begin{eqnarray*}
      \frac{1}{\gamma}\sum_{\substack{p\leqslant P\\ p\in\mathcal{P}_\gamma}} p^{1-\gamma}e\big(\alpha p^3\big)
     & = & \frac{1}{\gamma}\sum_{p\leqslant P} p^{1-\gamma}e\big(\alpha p^3\big)\big([-p^\gamma]-[-(p+1)^\gamma]\big) \\
 \end{eqnarray*}
 \begin{eqnarray*}
     & = & \sum_{p\leqslant P}e\big(\alpha p^3\big)+\frac{1}{\gamma}\sum_{p\leqslant P}p^{1-\gamma}e(\alpha p^3)
            \big(\psi(-(p+1)^{\gamma})-\psi(-p^\gamma)\big)+O(\log P).
 \end{eqnarray*}
By noting that, for $p\sim x$ satisfying $x\leqslant P^{1/2}$, we have
\begin{equation*}
  \sum_{p\sim x}p^{1-\gamma}e\big(\alpha p^3\big)\big(\psi(-(p+1)^{\gamma})-\psi(-p^\gamma)\big)
  \ll\sum_{p\sim x}p^{1-\gamma} \ll x^{2-\gamma}\ll P^{1-\gamma/2}\ll P^{1-\delta-\varepsilon}.
\end{equation*}
Therefore, in order to prove Proposition~\ref{proposition-T3-S3}, it is sufficient for us to prove that, for
any $x$ satisfying $P^{1/2}<x\leqslant P$, there holds
\begin{equation*}
 \sum_{p\sim x}p^{1-\gamma}e\big(\alpha p^3\big)\big(\psi(-(p+1)^{\gamma})-\psi(-p^\gamma)\big)\ll x^{1-\delta-\varepsilon}.
\end{equation*}
By partial summation, we have
\begin{eqnarray*}
  &   &  \sum_{p\sim x}p^{1-\gamma}e\big(\alpha p^3\big)\big(\psi(-(p+1)^{\gamma})-\psi(-p^\gamma)\big)  \\
  & \ll & (\log x)^{-1}\Bigg|\sum_{n\sim x}\Lambda(n)n^{1-\gamma}e\big(\alpha n^3\big)\big(\psi(-(n+1)^{\gamma})-\psi(-n^\gamma)\big)\Bigg|
          +x^{3/2-\gamma}\log x.
\end{eqnarray*}
Hence, we only need to show that
\begin{eqnarray}\label{only-1}
  \sum_{n\sim x}\Lambda(n)n^{1-\gamma}e\big(\alpha n^3\big)\big(\psi(-(n+1)^{\gamma})-\psi(-n^\gamma) \ll x^{1-\delta-\varepsilon}.
\end{eqnarray}

 Applying Lemma~\ref{Heath-Brown-1-lemma-2-1} to (\ref{only-1}) with the parameter $H=H_0$, the contribution of the error term in (\ref{psi(theta)}) is
\begin{eqnarray*}
  & \ll & \sum_{n\sim x}\Lambda(n)n^{1-\gamma}\min\bigg(1,\frac{1}{H_0\|n^\gamma\|}\bigg) \\
  & \ll & x^{1-\gamma}\log x\sum_{n\sim x}\min\bigg(1,\frac{1}{H_0\|n^\gamma\|}\bigg) \\
  & \ll & x^{1-\gamma}\log x\sum_{n\sim x}\sum_{h=-\infty}^{\infty}a_he\big(hn^\gamma\big) \\
  & \ll & x^{1-\gamma}\log x\sum_{h=-\infty}^{\infty}|a_h| \Bigg|\sum_{n\sim x}e\big(hn^\gamma\big)\Bigg|.
\end{eqnarray*}
For $h\neq0$, applying (\ref{van-de-corput-2}) to the inner sum, we get
\begin{equation*}
  \bigg|\sum_{n\sim x}e\big(hn^\gamma\big)\bigg|\ll h^{1/2}x^{\gamma/2}+h^{-1}x^{1-\gamma}.
\end{equation*}
Therefore, the contribution of the error term is
\begin{eqnarray}\label{error-contribution}
   & \ll & x^{1-\gamma}\log x\bigg(\frac{x\log H_0}{H_0}+\sum_{\substack{h=-\infty\\h\neq0}}^\infty
                   |a_h|\Big(|h|^{1/2}x^{\gamma/2}+|h|^{-1}x^{1-\gamma}\Big)\bigg)
                   \nonumber   \\
   & \ll & x^{1-\gamma}\log x\bigg(\frac{x\log H_0}{H_0}+\sum_{0<h\leqslant H_0}\frac{1}{h}\Big(h^{1/2}x^{\gamma/2}+h^{-1}x^{1-\gamma}\Big)
                   \nonumber   \\
   &     &   \qquad\qquad \qquad\qquad\qquad +\sum_{h>H_0}\frac{H_0}{h^2}\Big(h^{1/2}x^{\gamma/2}+h^{-1}x^{1-\gamma}\Big)\bigg)
                   \nonumber   \\
   & \ll & x^{1-\gamma}\log^2 x\Big(xH_0^{-1}+x^{1-\gamma}+H_0^{1/2}x^{\gamma/2}\Big).
\end{eqnarray}
Taking $H_0=x^{1-\gamma+\delta+\varepsilon}$, then (\ref{error-contribution}) is $\ll x^{1-\delta-\varepsilon}$.

  From (\ref{only-1}) and (\ref{error-contribution}), it is easy to see that, in order to prove Proposition \ref{proposition-T3-S3}, we only need to prove
\begin{equation} \label{only-2}
  \sum_{h\sim H}\frac{1}{h}\Bigg|\sum_{n\sim x}\Lambda(n)n^{1-\gamma}e\big(\alpha n^3\big)
        \Big( e\big(h(n+1)^\gamma\big)-e\big(hn^\gamma\big) \Big)  \Bigg| \ll x^{1-\delta-\varepsilon }.
\end{equation}
Set $H_1=x^{1-\gamma}$. If $H\leqslant H_1$, we write
\begin{equation}\label{H<H1}
   e\big(h(n+1)^\gamma\big)-e\big(hn^\gamma\big) = 2\pi ih\gamma\int_{0}^1 (n+u)^{\gamma-1} e\big(h(n+u)^\gamma\big)\mathrm{d}u.
\end{equation}
Putting (\ref{H<H1}) into the left hand side of (\ref{only-2}) and combining partial summation, we can see that the left hand of (\ref{only-2}) is
\begin{eqnarray}\label{H<H1-estimate}
    \ll  \sum_{h\sim H}\Bigg|\sum_{n\sim x}\Lambda(n)e\Big(\alpha n^3+h(n+u)^\gamma\Big)\Bigg|.
\end{eqnarray}
If $H>H_1$, we divide the left hand side of (\ref{only-2}) into two parts and treat them separately.
Applying partial summation to the inner sum
of the left hand side of (\ref{only-2}), we can see that the left hand of (\ref{only-2}) is
\begin{eqnarray}\label{H>H1-estimate}
    \ll  \frac{H_1}{H}\sum_{h\sim H}\Bigg|\sum_{n\sim x}\Lambda(n)e\Big(\alpha n^3+h(n+u)^\gamma\Big)\Bigg|.
\end{eqnarray}
Combining (\ref{only-2}), (\ref{H<H1-estimate}) and (\ref{H>H1-estimate}), it is sufficient to show that
\begin{equation*}
   \min\bigg(1,\frac{H_1}{H}\bigg)\sum_{h\sim H}\Bigg|\sum_{n\sim x}\Lambda(n)e\Big(\alpha n^3+h(n+u)^\gamma\Big)\Bigg|\ll x^{1-\delta-\varepsilon}.
\end{equation*}

  Take parameters $a_1,\cdots,a_9$ as condition (\ref{type-1-condition}) in Lemma \ref{type-I}. Let
\begin{equation*}
   a=\min\big(a_1,\cdots,a_9\big)-\varepsilon,\quad b=24(1-\gamma)+24\delta+\varepsilon,\quad c=\gamma-2\delta-\varepsilon.
\end{equation*}
Obviously, it is easy to check that
\begin{equation*}
    b<2/3,\qquad b<a,\qquad 1-c<c-b.
\end{equation*}
By Lemma~\ref{Heath-Brown-identity} with $k=3$, one can see that the exponential sum
\begin{equation*}
   \min\bigg(1,\frac{H_1}{H}\bigg)\sum_{h\sim H}\Bigg|\sum_{n\sim x}\Lambda(n)e\big(\alpha n^3+h(n+u)^\gamma\big)\Bigg|
\end{equation*}
can be written as linear combination of $O\big(\log^6x\big)$ sums of the form
\begin{eqnarray}\label{mathcal-T}
  \mathcal{T}  & = & \min\bigg(1,\frac{H_1}{H}\bigg)\sum_{h\sim H}\Bigg|\sum_{n_1\sim N_1}\cdots\sum_{n_6\sim N_6}(\log n_1)\mu(n_4)\mu(n_5)\mu(n_6)
                          \nonumber     \\
  &  &  \qquad \qquad\qquad\qquad\qquad\qquad\times
          e\Big(\alpha (n_1\cdots n_6)^3+h(n_1\cdots n_6+u)^\gamma\Big)\Bigg|,
\end{eqnarray}
where $N_1\cdots N_6\asymp x;\,2N_i\leqslant (2x)^{1/3},\,i=4,5,6$ and some $n_i$ may only take value $1$. Therefore, it is sufficient for us to prove that, for each $\mathcal{T}$ defined as (\ref{mathcal-T}), there holds $\mathcal{T}\ll x^{1-\delta-\varepsilon}$. Next, we will consider three
cases.

\textbf{Case 1}\quad If there exists an $N_j$ such that $N_j\geqslant x^{1-b}$, then we must have $j\leqslant3$ for the
fact that $1-b>1/3$. Let $m=\prod\limits_{i\neq j}n_i,\,n=n_j,\, M=\prod\limits_{i\neq j}N_i,\, N=N_j$. In this case, we can see
that $\mathcal{T}$ is a sum of ``Type I" satisfying $M\ll x^b\ll x^a$. By Lemma \ref{type-I}, the result follows.

\textbf{Case 2}\quad If there exists an $N_j$ such that $x^{1-c}\leqslant N_j <x^{1-b}$, then we
take $m=\prod\limits_{i\neq j}n_i,\,n=n_j,\, M=\prod\limits_{i\neq j}N_i,\, N=N_j$. Thus, $\mathcal{T}$ is a sum
of ``Type II" satisfying $x^b\ll M\ll x^c$. By Lemma \ref{type-II}, the result follows.

\textbf{Case 3}\quad If $N_j<x^{1-c}\,(j=1,2,3,4,5,6)$, without loss of generality, we assume
that $N_1\geqslant N_2\geqslant\cdots \geqslant N_6$. Let $\ell$ denote the  natural number $j$ such that
\begin{equation*}
  N_1N_2\cdots N_{j-1}<x^{1-c} ,\qquad N_1N_2\cdots N_{j}\geqslant x^{1-c}.
\end{equation*}
Since $N_1<x^{1-c}$ and $N_6<x^{1-c}$, then $2\leqslant \ell\leqslant5$. Thus, we have
\begin{equation*}
  x^{1-c}\leqslant N_1N_2\cdots N_\ell=(N_1\cdots N_{\ell-1})\cdot N_\ell<x^{1-c}\cdot x^{1-c}<x^{1-b}.
\end{equation*}
Let $m=\prod\limits_{i=\ell+1}^6n_i,\,n=\prod\limits_{i=1}^{\ell}n_i,\,M=\prod\limits_{i=\ell+1}^6N_i,\,N=\prod\limits_{i=1}^{\ell}N_i$. At this time,
$\mathcal{T}$ is a sum of ``Type II" satisfying $x^b\ll M\ll x^c$. By Lemma \ref{type-II}, the result follows.

   Combining the above three cases, we can assert that Proposition \ref{proposition-T3-S3} holds.

   \subsection{Proof of Theorem~\ref{Hua's-theorem-k=3}}

 Take parameters as follows:
\begin{equation*}
    \mathcal{Q}=N^{\sigma},\qquad \tau=N^{1-\sigma},
\end{equation*}
where $\sigma$ satisfies $0<\sigma\leqslant1/6$ to be determined later. When $1\leqslant a\leqslant q\leqslant \mathcal{Q}$ and $(a,q)=1$, define
major arcs and minor arcs as following:
\begin{equation*}
   \mathfrak{M}(a,q)=\bigg[\frac{a}{q}-\frac{1}{q\tau},\frac{a}{q}+\frac{1}{q\tau}\bigg]
\end{equation*}
and
\begin{equation*}
   \mathfrak{M}=\bigcup_{q\leqslant \mathcal{Q}}\bigcup_{\substack{1\leqslant a\leqslant q\\(a,q)=1}}\mathfrak{M}(a,q),
   \qquad  \mathfrak{m}=\bigg[\frac{1}{\tau},1+\frac{1}{\tau}\bigg]\setminus\mathfrak{M}.
\end{equation*}

 It is easy to find that Theorem~\ref{Hua's-theorem-k=3} is a direct corollary of the following theorem.
\begin{theorem}\label{theorem-3-2}
  Under the condition of Theorem~\ref{Hua's-theorem-k=3}, we have
  \begin{eqnarray}\label{Theorem-2-(6)}
    &   &   \int_{\frac{1}{\tau}}^{1+\frac{1}{\tau}}T_{1,1}(N,\alpha) T_{1,2}(N,\alpha)T_{3}(N,\alpha)e\big(-N\alpha\big)\mathrm{d}\alpha
                  \nonumber   \\
  &  =  &   \int_{\frac{1}{\tau}}^{1+\frac{1}{\tau}}S^2_1(N,\alpha)S_3(N,\alpha)e\big(-N\alpha\big)\mathrm{d}\alpha+O\big(N^{4/3}\mathcal{L}^{-B}\big),
  \end{eqnarray}
  where $B>0$ is arbitrary.
\end{theorem}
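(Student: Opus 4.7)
The plan is to compare the two integrands via a telescoping identity and to estimate each resulting error integral separately. Writing
\[
T_{1,1}T_{1,2}T_{3}-S_{1}^{2}S_{3}
= T_{1,1}T_{1,2}(T_{3}-S_{3}) + T_{1,1}(T_{1,2}-S_{1})S_{3} + (T_{1,1}-S_{1})S_{1}S_{3},
\]
I would integrate both sides against $e(-N\alpha)$ over $[\tau^{-1},1+\tau^{-1}]$ (a full period) and bound each of the three resulting integrals in absolute value. The basic tools are the pointwise bounds $\|T_{1,i}-S_{1}\|_{\infty}\ll N^{1-\delta_{1}-\varepsilon}$ from Lemma \ref{T_1=S_1+O} and $\|T_{3}-S_{3}\|_{\infty}\ll P^{1-\delta_{3}-\varepsilon}$ from Proposition \ref{proposition-T3-S3}, the mean-value $\|T_{1,i}\|_{2}^{2}\ll N^{2-\gamma_{i}}$ of Lemma \ref{T_1-square-mean-value}, and Parseval's identity giving $\|S_{1}\|_{2}^{2}\ll N\mathcal{L}^{-1}$ and $\|S_{3}\|_{2}^{2}\ll N^{1/3}\mathcal{L}^{-1}$.

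For the first piece, Hölder's inequality with exponents $(\infty,2,2)$ combined with Lemma \ref{T_1-square-mean-value} gives
\[
\Bigl|\int T_{1,1}T_{1,2}(T_{3}-S_{3})e(-N\alpha)\,\mathrm{d}\alpha\Bigr|
\ll P^{1-\delta_{3}-\varepsilon}\cdot N^{1-\gamma_{1}/2}\cdot N^{1-\gamma_{2}/2}
= N^{7/3-(\gamma_{1}+\gamma_{2})/2-\delta_{3}/3-\varepsilon/3},
\]
which is $\ll N^{4/3}\mathcal{L}^{-B}$ for every $B$ under the hypothesis $(\gamma_{1}+\gamma_{2})/2+\delta_{3}/3>1$ of Theorem \ref{Hua's-theorem-k=3}. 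This reproduces exactly the second of the two binding conditions in the theorem statement.

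The real work lies in the other two pieces, which carry a factor $T_{1,i}-S_{1}$ paired with $S_{3}$. Here a naive Hölder estimate $\|T_{1,i}-S_{1}\|_{\infty}\,\|T_{1,j}\|_{2}\,\|S_{3}\|_{2}$ yields an exponent of shape $13/6-\delta_{1}-\gamma_{j}/2$, demanding $\delta_{1}+\gamma_{j}/2>5/6$, which is hopeless given the admissible range $\delta_{1}<9/86$ forced by Lemma \ref{T_1=S_1+O}. The unusually small coefficient $1/40$ on $\delta_{1}$ in the hypothesis $(\gamma_{1}+\gamma_{2})/2+\delta_{1}/40>1$ tells us that Lemma \ref{T_1=S_1+O} cannot be applied in such a crude way: instead one must unfold $T_{1,i}-S_{1}$ via the Fourier expansion of $\psi$ (Lemma \ref{Heath-Brown-1-lemma-2-1}), exactly as in the proof of Proposition \ref{proposition-T3-S3} for $T_{3}-S_{3}$, and reduce the remaining integrals to a weighted average
\[
\sum_{h}\frac{1}{h}\int\Bigl(\sum_{p\leq N}p^{1-\gamma_{i}}e\bigl(\alpha p+h p^{\gamma_{i}}\bigr)\Bigr)T_{1,j}(\alpha)S_{3}(\alpha)e(-N\alpha)\,\mathrm{d}\alpha
\]
of bilinear exponential sums. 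Cauchy--Schwarz in the outer variables together with the mean-value estimates of Lemma \ref{T_1-square-mean-value} and Parseval, and exponential-sum estimates of the flavour of Lemmas \ref{S-1-N-alpha} and \ref{Jia-0-Lemma-1} applied to the inner sum, should then extract the desired saving.

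The main obstacle I foresee is this last step: calibrating the Fourier cut-off $H_{0}$, the Hölder exponents, and the bilinear mean-value bounds so that the numerical coefficient comes out to be $1/40$ rather than some larger constant. Once all three pieces of the telescoping are shown to be $\ll N^{4/3}\mathcal{L}^{-B}$, Theorem \ref{theorem-3-2} follows at once, and Theorem \ref{Hua's-theorem-k=3} is then deduced from it together with the standard circle-method evaluation of $\int S_{1}^{2}S_{3}\,e(-N\alpha)\,\mathrm{d}\alpha$ on the major arcs $\mathfrak{M}$ and the minor-arc bound via Lemmas \ref{S-1-N-alpha}--\ref{S-3-N-alpha}.
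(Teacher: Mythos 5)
Your first piece (the replacement of $T_{3}$ by $S_{3}$ via Proposition~\ref{proposition-T3-S3}, H\"older, and Lemma~\ref{T_1-square-mean-value}) coincides with the paper's estimate (\ref{Theorem-2-(7)}) and is fine. But for the two remaining pieces of your telescoping you only observe, correctly, that the crude bound $\|T_{1,i}-S_{1}\|_{\infty}\ll N^{1-\delta_{1}}$ is useless over the whole interval, and then sketch an unexecuted programme (Fourier expansion of $\psi$ for $T_{1,i}-S_{1}$, bilinear sums, recalibration of exponents), explicitly flagging it as an unresolved obstacle. That is precisely the gap: the theorem is not proved by making the comparison $T_{1,i}\approx S_{1}$ uniformly at all, and no amount of recalibrating a global H\"older/bilinear argument is known to produce the required saving. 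The idea you are missing is the circle-method dissection with $\mathcal{Q}=N^{\sigma}$, $\tau=N^{1-\sigma}$, $2\sigma<\delta_{1}$: the pointwise Lemma~\ref{T_1=S_1+O} is applied \emph{only on the major arcs}, where it is more than enough because $\mathrm{meas}(\mathfrak{M})\ll N^{2\sigma-1}$ and the trivial bounds $S_{1}\ll N$, $S_{3}\ll N^{1/3}$ give error contributions $\ll N^{4/3+2\sigma-\delta_{1}}$ and $\ll N^{4/3+2(\sigma-\delta_{1})}$, while on the minor arcs one never compares $T_{1,i}$ with $S_{1}$: Lemma~\ref{S-3-N-alpha} (Harman) gives $\sup_{\mathfrak{m}}|S_{3}|\ll N^{1/3-\sigma/16+\varepsilon}$, which against $\int_{0}^{1}|T_{1,i}|^{2}\,\mathrm{d}\alpha\ll N^{2-\gamma_{i}}$ bounds $\int_{\mathfrak{m}}T_{1,1}T_{1,2}S_{3}$, and against $\int_{0}^{1}|S_{1}|^{2}\,\mathrm{d}\alpha\ll N$ reinstates $\int_{\mathfrak{m}}S_{1}^{2}S_{3}$ so that the main term becomes the full-interval integral.

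This also corrects your diagnosis of where the small coefficient of $\delta_{1}$ comes from: it is not the outcome of a delicate bilinear treatment of $T_{1,i}-S_{1}$, but simply the combination of Harman's $1/16$-power saving in $S_{3}$ on $\mathfrak{m}$ with the constraint $\sigma<\delta_{1}/2$ from the major-arc measure, yielding the condition $\frac{\gamma_{1}+\gamma_{2}}{2}+\frac{\sigma}{16}>1$, i.e.\ $\frac{\gamma_{1}+\gamma_{2}}{2}+\frac{\delta_{1}}{32}>1$ (the hypothesis $\delta_{1}/40$ in Theorem~\ref{Hua's-theorem-k=3} is just stronger than what the proof needs). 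Without the $\mathfrak{M}$/$\mathfrak{m}$ splitting your argument, as written, does not close.
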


\subsection{Proof of Theorem~\ref{theorem-3-2}}

  First, take $\gamma=\gamma_3,\,\delta=\delta_3$ in Proposition~\ref{proposition-T3-S3}. By Lemma~\ref{T_1-square-mean-value} and Cauchy's inequality, we have
\begin{eqnarray} \label{Theorem-2-(7)}
  &     &   \int_{\frac{1}{\tau}}^{1+\frac{1}{\tau}}T_{1,1}(N,\alpha) T_{1,2}(N,\alpha)
            \Big(T_{3}(N,\alpha)-S_3(N,\alpha)\Big)e\big(-N\alpha\big)\mathrm{d}\alpha
                  \nonumber   \\
  & \ll &   \max_{\alpha\in[\frac{1}{\tau},1+\frac{1}{\tau}]}  \big|T_{3}(N,\alpha)-S_3(N,\alpha) \big|
            \times \int_{0}^1 \big|T_{1,1}(N,\alpha) T_{1,2}(N,\alpha)\big| \mathrm{d}\alpha
                  \nonumber   \\
  & \ll &  N^{(1-\delta_3)/3}\bigg(\int_0^1\big|T_{1,1}(N,\alpha)\big|^2\mathrm{d}\alpha\bigg)^{\frac{1}{2}}
            \bigg(\int_0^1\big|T_{1,2}(N,\alpha)\big|^2\mathrm{d}\alpha\bigg)^{\frac{1}{2}}
                  \nonumber   \\
  & \ll &  N^{(1-\delta_3)/3}\cdot N^{1-\gamma_1/2} \cdot N^{1-\gamma_2/2}
                  \nonumber   \\
  & \ll &   N^{7/3-(\gamma_1+\gamma_2)/2-\delta_3/3}.
\end{eqnarray}

Second, we divide the integral into the major arcs and the minor arcs.
\begin{eqnarray*}
  &    &  \int_{\frac{1}{\tau}}^{1+\frac{1}{\tau}}T_{1,1}(N,\alpha) T_{1,2}(N,\alpha)
               S_3(N,\alpha) e\big(-N\alpha\big)\mathrm{d}\alpha     \\
  & = &  \bigg\{  \int_{\mathfrak{M}}+\int_{\mathfrak{m}}   \bigg\}
            T_{1,1}(N,\alpha) T_{1,2}(N,\alpha)  S_3(N,\alpha) e\big(-N\alpha\big)\mathrm{d}\alpha .
\end{eqnarray*}
For $\alpha\in\mathfrak{m}$, there exist integers $a$ and $q$, such that (\ref{Diophantine-explicit}) holds with
\begin{equation*}
    \mathcal{Q}<q\leqslant\tau, \quad (a,q)=1,\quad 1\leqslant a\leqslant q.
\end{equation*}
Thus, we have
\begin{eqnarray*}
   \sup_{\alpha\in\mathfrak{m}}\big|S_{3}(N,\alpha)\big|
     &  \ll  &  N^{\frac{1}{3}+\varepsilon} \bigg(\frac{1}{Q}+\frac{1}{N^{1/6}}+\frac{\tau}{N}\bigg)^{\frac{1}{16}}  \\
     &  \ll  &  N^{\frac{1}{3}+\varepsilon} \big(N^{-\sigma}+N^{-\frac{1}{6}}\big)^{\frac{1}{16}}   \\
     &  \ll &   N^{\frac{1}{3}-\frac{\sigma}{16}+\varepsilon}.
\end{eqnarray*}
Using Lemma~\ref{T_1-square-mean-value} and Cauchy's inequality, we have
\begin{eqnarray}\label{Theorem-2-(8)}
   &     &      \int_{\mathfrak{m}} T_{1,1}(N,\alpha) T_{1,2}(N,\alpha)  S_3(N,\alpha) e\big(-N\alpha\big)\mathrm{d}\alpha
                      \nonumber   \\
   & \ll &      \sup_{\alpha\in\mathfrak{m}}\big|S_{3}(N,\alpha)\big|\times  \int_{\mathfrak{m}}
                \big| T_{1,1}(N,\alpha) T_{1,2}(N,\alpha)\big|\mathrm{d}\alpha
                      \nonumber   \\
   & \ll &      \sup_{\alpha\in\mathfrak{m}}  \big|S_{3}(N,\alpha)\big|\times
                \bigg(\int_0^1\big|T_{1,1}(N,\alpha)\big|^2\mathrm{d}\alpha\bigg)^{\frac{1}{2}}
                \bigg(\int_0^1\big|T_{1,2}(N,\alpha)\big|^2\mathrm{d}\alpha\bigg)^{\frac{1}{2}}
                      \nonumber   \\
   & \ll &      N^{\frac{1}{3}-\frac{\sigma}{16}+\varepsilon}\cdot N^{1-\gamma_1/2} \cdot N^{1-\gamma_2/2}
                      \nonumber   \\
   & \ll &      N^{\frac{7}{3}-\left(\frac{\sigma}{16}+\frac{\gamma_1+\gamma_2}{2}\right)+\varepsilon}.
\end{eqnarray}
For $\alpha\in\mathfrak{M}$, taking $\gamma=\gamma_i\,(i=1,2)$ in Lemma~\ref{S-1-N-alpha}, we have
\begin{eqnarray}\label{Theorem-2-(9)}
  &    &   \int_{\mathfrak{M}} T_{1,1}(N,\alpha) T_{1,2}(N,\alpha)  S_3(N,\alpha) e(-N\alpha)\mathrm{d}\alpha
                    \nonumber   \\
 & \ll & \int_{\mathfrak{M}}S_{1}^2(N,\alpha)S_{3}(N,\alpha)e(-N\alpha)\mathrm{d}\alpha
         + \int_{\mathfrak{M}}S_{1}(N,\alpha)S_{3}(N,\alpha)\cdot O(N^{1-\delta_1})\cdot e(-N\alpha)\mathrm{d}\alpha
                    \nonumber   \\
 &    &  + \int_{\mathfrak{M}} \big(O(N^{1-\delta_1})\big)^2S_{3}(N,\alpha)e(-N\alpha)\mathrm{d}\alpha
                    \nonumber   \\
 & =: &  \mathrm{I}+\mathrm{II}+\mathrm{III},
\end{eqnarray}
say. For $\mathrm{I}$, noting that
\begin{eqnarray*}
  &      &  \int_{\mathfrak{m}}  S_{1}^2(N,\alpha)S_{3}(N,\alpha)e(-N\alpha)\mathrm{d}\alpha
                   \nonumber   \\
  &  \ll &  \sup_{\alpha\in\mathfrak{m}} \big|S_3(N,\alpha)\big|\times \int_{0}^1\big|S_1(N,\alpha)\big|^2\mathrm{d}\alpha
                   \nonumber   \\
  & \ll  &  N^{\frac{1}{3}-\frac{\sigma}{16}+\varepsilon} \cdot N\ll N^{\frac{4}{3}-\frac{\sigma}{16}+\varepsilon},
\end{eqnarray*}
we have
\begin{eqnarray}\label{Theorem-2-(10)}
  \mathrm{I} & = & \int_{\frac{1}{\tau}}^{1+\frac{1}{\tau}}S^2_1(N,\alpha)S_3(N,\alpha)e\big(-N\alpha\big)\mathrm{d}\alpha
                    -\int_{\mathfrak{m}}  S_{1}^2(N,\alpha)S_{3}(N,\alpha)e(-N\alpha)\mathrm{d}\alpha
                        \nonumber   \\
  &  =  &  \int_{\frac{1}{\tau}}^{1+\frac{1}{\tau}}S^2_1(N,\alpha)S_3(N,\alpha)e\big(-N\alpha\big)\mathrm{d}\alpha
           +O\big(N^{\frac{4}{3}-\frac{\sigma}{16}+\varepsilon}\big).
\end{eqnarray}
It is easy to see that the measure of $\mathfrak{M}$ is
\begin{equation*}
  \ll \sum_{q\leqslant \mathcal{Q}}\sum_{a=1}^q\frac{1}{q\tau}\ll\frac{\mathcal{Q}}{\tau}\ll N^{2\sigma-1}.
\end{equation*}
Combining the trivial bound $S_1(N,\alpha)\ll N,\,S_3(N,\alpha)\ll N^{1/3}$, we have
\begin{eqnarray}\label{Theorem-2-(11)}
  \mathrm{II} & \ll &  N^{1-\delta_1}   \int_{\mathfrak{M}}  \big|S_1(N,\alpha) S_3(N,\alpha)\big|\mathrm{d}\alpha
                    \nonumber  \\
              & \ll &  N^{1-\delta_1} \cdot N\cdot N^{\frac{1}{3}}\cdot \mathrm{meas}(\mathfrak{M})
              \ll  N^{\frac{4}{3}+2\sigma-\delta_1}
\end{eqnarray}
and
\begin{eqnarray}\label{Theorem-2-(12)}
  \mathrm{III} & \ll &  N^{2-2\delta_1}   \int_{\mathfrak{M}}  \big| S_3(N,\alpha)\big|\mathrm{d}\alpha   \nonumber  \\
               & \ll &  N^{2-2\delta_1} \cdot N^{\frac{1}{3}} \cdot \mathrm{meas}(\mathfrak{M}) \ll N^{\frac{4}{3}+2(\sigma-\delta_1)}.
\end{eqnarray}
Collecting the above formulas (\ref{Theorem-2-(7)})-(\ref{Theorem-2-(12)}), under the conditions
\begin{equation*}
   \frac{\gamma_1+\gamma_2}{2}+\frac{\delta_3}{3}>1,\qquad \frac{\sigma}{16}+\frac{\gamma_1+\gamma_2}{2}>1,\qquad 2\sigma-\delta_1<0,
\end{equation*}
\begin{equation*}
   73(1-\gamma_i)+86\delta_1<9\quad (i=1,2),\qquad 1714(1-\gamma_3)+1725\delta_3<46,
\end{equation*}
i.e.
\begin{equation*}
   \frac{\gamma_1+\gamma_2}{2}+\frac{\delta_3}{3}>1,\qquad \frac{\gamma_1+\gamma_2}{2}+\frac{\delta_1}{32}>1,
\end{equation*}
\begin{equation*}
   73(1-\gamma_i)+86\delta_1<9\quad (i=1,2),\qquad 1714(1-\gamma_3)+1725\delta_3<46,
\end{equation*}
the equation (\ref{Theorem-2-(6)}) holds.

\bigskip
\bigskip

\textbf{Acknowledgement}

   The authors would like to express the most and the greatest sincere gratitude to Professor Wenguang Zhai for his valuable
advice and constant encouragement.

\end{document}